\newtheorem{thm}{Theorem}[section]
\newtheorem{cor}[thm]{Corollary}
\newtheorem{prop}[thm]{Proposition}
\newtheorem{lem}[thm]{Lemma}
\newtheorem{assumption}[thm]{Assumption}
\theoremstyle{definition}
\newtheorem{defn}[thm]{Definition}
\theoremstyle{remark}
\newtheorem{rmk}[thm]{Remark}
\newcommand{\R}{\mathcal{R}}
\newcommand{\Z}{\mathbf{Z}}
\newcommand{\Q}{\mathbf{Q}}
\newcommand{\A}{\mathcal{A}}
\newcommand{\C}{\mathcal{C}}
\newcommand{\cP}{\mathcal{P}}
\newcommand{\E}{\mathcal{E}}
\newcommand{\F}{\mathbf{F}_{p}}
\newcommand{\m}{\mathfrak m}
\newcommand{\mO}{\mathcal{O}}
\newcommand{\D}{{\Bar{D}}}
\newcommand{\GL}{\text{GL}}
\newcommand{\End}{{\text{End}}}
\newcommand{\Hom}{{\text{Hom}}}
\newcommand{\Ext}{{\text{Ext}}}
\newcommand{\G}{{\text{G}_{\Sigma}}}
\newcommand{\red}{{\text{red}}}
\newcommand{\FL}{{\text{FL}}}
\newcommand{\Gal}{{\text{Gal}}}
\newcommand{\tr}{{\text{tr }}}
\newcommand{\mmod}{{\text{mod }}}
\newcommand{\Ann}{{\text{Ann}}}
\newcommand{\tot}{{\text{tot}}}
\newcommand{\sss}{{\text{ss}}}
\newcommand{\Frob}{{\text{Frob}}}
\newcommand{\ur}{{\text{ur}}}
\newcommand{\f}{{\textbf{f}}}
\newcommand{\adele}{\mathbf{A}}
\newcommand{\upperhalf}{\mathbf{H}}
\newcommand{\ad}{\text{ad}}
\newcommand{\Rpseudo}{R_{\D}^{\FL}}
\newcommand{\tpsi}{\Tilde{\psi}}
\newcommand{\val}{\text{val}}
\DeclareSymbolFont{cyrletters}{OT2}{wncyr}{m}{n}
\DeclareMathSymbol{\Sha}{\mathalpha}{cyrletters}{"58}
\let\c@equation\c@thm
\numberwithin{equation}{section}
\title[\resizebox{4.5 in}{!}{Deformation rings with three Jordan-H\"{o}lder factors}]{On deformation rings of residual Galois representations with three Jordan-H\"{o}lder factors And Modularity}
\author{Xiaoyu Huang}
\def\keywords{\xdef\@thefnmark{}\@footnotetext}
\begin{document}
\keywords{2010 \emph{Mathematics Subject Classification.} Primary 11F80; Secondary 11F55.}
\keywords{\emph{Keywords.} Galois representations, automorphic forms, deformations, the ideal of reducibility.}

\begin{abstract}
In this paper, we study Fontaine--Laffaille, self-dual deformations of a
mod $p$ non-semisimple Galois representation of dimension $n$ with its Jordan-H\"{o}lder factors being
three mutually non-isomorphic absolutely irreducible representations. We show
that under some conditions regarding the orders of certain Selmer groups, the
universal deformation ring is a discrete valuation ring. Given enough information on the Hecke algebra, we also prove an $R = \mathbf{T}$ theorem in the general context. We then apply our results to abelian surfaces with cyclic rational isogenies and certain $6$-dimensional representations arising from automorphic forms congruent to Ikeda lifts. Assuming the Bloch--Kato conjecture, our result identifies special $L$-value conditions for the existence of a unique abelian surface isogeny class and an $R = \mathbf{T}$ theorem for certain $6$-dimensional Galois representations.
\end{abstract}

\maketitle


\tableofcontents
\addtocontents{toc}{\protect\setcounter{tocdepth}{1}}

\section{Introduction}
\subsection{Context}
In 1997, Skinner and Wiles proved the first modularity theorem for ordinary residually reducible 2-dimensional Galois representations over $\Q$ \cite{skinner1997ordinary}. Their method, which was an extension of the Taylor-Wiles strategy, had a limitation that is sometimes referred to as the ``cyclicity'' hypothesis. This hypothesis essentially guaranteed that the underlying non-semi-simple residual Galois representation with specified characters on the diagonal in their case was unique up to isomorphism. In a subsequent paper, \cite{skinner1999residually}, they removed this obstruction using a quite different approach. In particular, they no longer identified a universal deformation ring with a Hecke algebra.

In the past 15 years, there have been various extensions of both approaches. An analogous method to that of \cite{skinner1999residually} was used for example by Thorne \cite{thorne2015automorphy} and later by Allen, Newton and Thorne \cite{allen2020automorphy} to prove an automorphy lifting theorem in the context of $\GL_n$.

On the other hand, Berger and Klosin used the Taylor-Wiles strategy and various versions of the cyclicity hypothesis to prove modularity of certain 2-dimensional Galois representations over imaginary quadratic fields (cf. \cite{berger2009deformation} \cite{berger2011r}). Later they considered a more general situation where the residual representation was a non-split extension of 2 irreducible Galois representations of arbitrary dimensions. This led to a proof of modularity of certain 4-dimensional Galois representations which were shown to arise from type-G Siegel modular forms congruent to Yoshida lifts \cite{berger2013deformation}. In a somewhat different direction, Wake and Wang-Erickson \cite{wake2020rank} proved an $R = \mathbf{T}$ theorem for 2-dimensional Galois pseudorepresentations of $\Q$ arising in the context of the Mazur's Eisenstein ideal using the method of pseudodeformations, which was introduced in detail in their companion paper \cite{wake2019deformation}. The cyclicity hypothesis was no longer required for this method, though it was needed to establish the $R = \mathbf{T}$ theorem. In their later result on the square-free level Eisenstein ideal \cite{wake2021eisenstein}, the cyclicity hypothesis was no longer required in the proof of the $R = \mathbf{T}$ theorem. 

A common feature of the results of Berger-Klosin and Wake-Wang-Erickson is the fact that the residual Galois representation under consideration has two Jordan-H\"{o}lder factors. In \cite{berger2013deformation} the authors pinpointed conditions under which one can control a universal deformation ring of a reducible Galois representation with two Jordan-H\"{o}lder factors, and in favorable situations proved an $R = \mathbf{T}$ theorem. 

\subsection{Main Results}
In this paper, we carry out a similar analysis in the case where the residual representation has three Jordan-H\"{o}lder factors. In particular, we find an appropriate replacement for the cyclicity hypothesis and prove the uniqueness of the residual representation under such replacement. We then show that under certain Selmer group conditions, the universal deformation ring is a discrete valuation ring (DVR), which is the easiest case when modularity can sometimes be proved. To the best of our knowledge, Calegari first introduced a proof of $R = \mathbf{T}$ theorems in the setting when $R$ is a DVR \cite{calegari2006eisenstein}. Finally, we prove an $R = \mathbf{T}$ theorem in a more general context when the universal deformation ring $R$ is not necessarily a DVR. The main theorem is stated below, and more details can be found in Theorem \ref{main thoerem} and Theorem \ref{R=T}. Please see Section \ref{Global Selmer Groups} for the definition of $H^{1}_{\Sigma}$. 

Let $n$ be an integer and $p > n$ be a prime number. Let $\mO$ be the valuation ring of a finite extension of $\Q_p$ with residue field $k$ and uniformizer $\varpi$. Let $F$ be a number field and $G_{\Sigma} = \Gal(F_{\Sigma}/F)$. The residual Galois representation $\overline{\sigma}: \G \to \GL_{n}(k)$ satisfies
\begin{equation*}
\overline{\sigma} = 
\begin{pmatrix}
    \rho_1 & a & b \\ 
    0 & \rho_2 & c \\ 
    0 & 0 & \rho_3 \\ 
\end{pmatrix},
\end{equation*}
where 
\begin{enumerate}
\item $\overline{\sigma}$ is Fontaine-Laffaille at $p$;
\item $\rho_i: G_{\Sigma} \to \GL_{n_i}(k)$ are absolutely irreducible for each $i$ and $\rho_i|_{G_{F_{v}}} \not \cong \rho_j|_{G_{F_{v}}}$ for $i \ne j$ and $v|p$;
\item there is a unique deformation $\Tilde{\rho}_i$ of $\rho_i$ to $\mO$ that is Fontaine-Laffaille at $p$ up to strict equivalence for all $i$;
\item $\begin{pmatrix}
\rho_1 & a \\
0 & \rho_2
\end{pmatrix}
$
and 
$
\begin{pmatrix}
    \rho_2 & c \\
    0 & \rho_3
\end{pmatrix}
$
are non-split extensions;
\item $\overline{\sigma}$ is $\tau$-self-dual for some anti-involution $\tau: k[\G] \to k[\G]$.
\end{enumerate}

\begin{thm}{\label{intro main}}
Consider the deformations $\sigma: G_{\Sigma} \to \GL_{n}(\overline{\Q_{p}})$ of $\overline{\sigma}$ with the following deformation conditions:
\begin{enumerate}
    \item $\sigma$ is Fontaine-Laffaile at $p$.
    \item $\sigma$ is $\tau$-self-dual.
\end{enumerate}
Let $R$ be the reduced quotient of the related universal deformation ring. Suppose
\begin{enumerate}
\renewcommand{\labelenumi}{(\roman{enumi})}
    \item $H^{1}_{\Sigma}(F, \Hom(\rho_3,\rho_1)) = 0$;
    \item $\dim H^{1}_{\Sigma}(F, \Hom(\rho_2,\rho_1)) = \dim H^{1}_{\Sigma}(F, \Hom(\rho_1,\rho_2))= 1$;
    \item $\# H^1_{\Sigma}(F,\Hom(\Tilde{\rho_3},\Tilde{\rho_2}) \otimes_{E} E/\mO) \le \# \mO/\varpi^{r} \mO$.
\end{enumerate}
If $r = 1$ and $\overline{\sigma}$ admits a deformation to $\GL_n(\mO)$ that is Fontaine-Laffaille and $\tau$-self-dual, then $R$ is a DVR. 

If $r \ge 1$ and $I^{\tot}$ is the total reducibility ideal of $R$ (Definition \ref{red def}), $\mathbf{T}$ is a suitable Hecke algebra, and $\# \mathbf{T}/ \phi(I^{\tot}) \ge \# \mO/\varpi^r$ for a cannonical map $\phi: R \to \mathbf{T}$, then $\phi$ is an isomorphism.
\end{thm}

Theorem \ref{intro main} is intended for a broad application and we believe the assumptions should be met in many cases. In particular, when $H^1_{\Sigma}(\cdot)$ coincides with the Bloch-Kato Selmer group $H^1_f(\cdot)$, in our experience, conditions (i)-(iii) are met often (see Lemma \ref{two selmer groups} when the two groups agree and Remark \ref{comments on Selmer group sizes} for examples when the vanishing condition (i) or cyclicity condition (ii) holds).



Needless to say, there are significant new challenges that arise when dealing with three Jordan-H\"{o}lder factors as opposed to two. Let us briefly remark on some of them here, as the novelty of the paper is in part in overcoming some of them. The first issue, as stated above, is to identify an appropriate replacement for the cyclicity hypothesis and prove the uniqueness of the residual representation under that replacement. The second issue is the existence of a Galois-stable lattice with respect to which the residual representation is indeed the reduction of a desired irreducible representation arising from geometry or automorphic forms. Specifically, when considering two Jordan-H\"{o}lder factors, one could invoke Ribet's Lemma \cite[Proposition 2.1]{ribet1976modular} to guarantee the existence of such lattices for non-trivial residual extensions in ``either direction''. However, when there are three Jordan-H\"{o}lder factors, Ribet's Lemma no longer applies. Moreover, a theorem of Bella\"{i}che known as ``generalized Ribet's Lemma'' does not guarantee the existence of such lattices (see Remark \ref{Ribets lemma}). Here, however, using the condition that our representations are Fontaine--Laffaille at $p$, we are able to construct a lattice such that the residual representation is composed of non-trivial extensions, giving an analog of Ribet's lemma. We note that our argument also applies to any condition that is preserved when taking subquotients.

Lastly, when studying the structure of the universal deformation ring $R$, one can study the reducibility ideal of $R$, details of which can be found in Section 1.5 \cite{bellaiche2009families}. The reducibility ideal has an explicit expression using the theory of GMAs. For representations with two Jordan-H\"{o}lder factors, such expression is simple and one can compute the ideal using it. For $n$ Jordan-H\"{o}lder factors, however, the expression is a sum of $\binom{n}{2}$ $R$-modules, which significantly complicates the calculations. In \cite{berger2020deformations}, the authors proved the principality of this ideal in a special case of deformations with three particular Jordan-H\"{o}lder factors, using explicit injection maps from certain Hom groups to Ext groups of $\G$-modules and the theory of pseudocharacters. 

Here, we study the reducibility ideal in the general context of pseudorepresentation developed by \cite{bellaiche2009families}, \cite{chenevier2014p} and \cite{wake2019deformation}. In particular, in \cite{wake2019deformation}, the authors studied deformation conditions for pseudorepresentations, extending the result of \cite{ramakrishna1993variation} from representations to pseudorepresentations, which allows us to define a Fontaine-Laffaille at $p$ deformation ring for pseudorepresentations. They also showed the injections from \cite{berger2020deformations} are in fact isomorphisms in the context of universal deformations for pseudorepresentation of degree $2$. We prove such isomorphisms exist in the case of universal deformations for pseudorepresentations of any degree in the 3 Jordan-H\"older factors setting, and conclude the principality of the reducibility ideal under certain Selmer group assumptions. We expect a similar proof should hold for an arbitrary number of Jordan-H\"older factors. We believe our results in Section 4 can also be useful when one removes the cyclicity hypothesis and study deformations of pseudorepresentations in the $n \ge 3$ Jordan-H\"older factors setting, similar to the work of \cite{wake2020rank}\cite{wake2021eisenstein}.

Residual Galois representations with three Jordan-H\"{o}lder factors arise naturally in some important number-theoretic questions. We present a few examples where these occur, without delving deeply into the specific details or contributions of these papers. For example, Hsieh \cite{Hsieh11} \cite{hsieh2014eisenstein} studied generically irreducible and residually reducible Galois representations with three Jordan-H\"{o}lder factors associated with cuspidal automorphic representations of $U(3, 1)$ and $U(2, 1)$ in the context of the Iwasawa main conjectures for $\GL_2 \times \mathcal{K}^{\times}$ and CM fields respectively using the method of Eisenstein congruences, where $\mathcal{K}$ is an imaginary quadratic field. Bella\"{i}che and Chenevier
\cite{bellaiche2004formes} \cite{bellaiche2009families} studied residually reducible Galois representations with three Jordan-H\"{o}lder factors in the context of the Bloch--Kato conjecture.

In particular, one of the most interesting open problems in number theory is the Paramodular Conjecture \cite{brumer2014paramodular} postulating that there is a one-to-one correspondence between isogeny classes of abelian surfaces $A_{/ \Q}$ of conductor $N$ with $\text{End}_{\Q} A = \Z$ or QM abelian surfaces $A_{/ \Q}$ of conductor $N^2$ and cuspidal, nonlift weight 2 Siegel paramodular newforms $f$ with rational eigenvalues, up to scalar multiplication.

Let $p > 3$ be a prime. If the abelian surface $A$ in question has a rational $p$-isogeny and a polarization of degree prime to $p$, then the Galois representation on $A[p]$, $\overline{\sigma}_A: G_{\Q} \to \text{GL}_{4}(\F)$ satisfies
\begin{equation*}
   \overline{\sigma}_A^{\sss} \cong \psi \oplus \rho^{\sss} \oplus \psi^{-1} \chi,
\end{equation*}
where $\chi$ is the mod-$p$ cyclotomic character, $\psi: G_{\Q} \to \F^{\times}$ is the character acting on the Galois-stable cyclic subgroup of order $p$ defined over $\Q$, and $\rho$ is a two-dimensional representation. If $\rho$ is irreducible then this is precisely the situation that we study in this paper. In the special case when $A$ had a $p$-torsion point, so $\psi$ is trivial, Berger and Klosin \cite{berger2020deformations} formulated conditions under which the universal deformation ring is a DVR. They assumed the conductor $N$ of $A$ is square-free and the local Galois representation at primes dividing the conductor has a particular shape 
 (the monodromy rank one condition). 

In this paper, we show the deformation ring is a DVR by using certain Selmer group conditions instead, and we do not require $N$ to be square-free. In fact, if $\psi$ is non-trivial, the conductor $N$ is always square-full. The Selmer groups in our context are the ``relaxed'' Selmer groups $H^1_{\Sigma}$ (see Section \ref{Global Selmer Groups}), which only impose local conditions at $p$. In \cite{berger2020deformations}, the authors showed that the orders of $H^1_{\Sigma}$ can be identified with the orders of Bloch--Kato Selmer groups $H^1_{f}$ when $N$ is square-free, and the latter are conjecturally are bounded by special $L$-values. Here, we prove the relationship without the square-free assumption. For a more detailed discussion, please see Section \ref{section $R$ is a DVR}. Let $\epsilon$ be the $p$-adic cyclotomic character. The following summarizes Theorem \ref{R is DVR}.

\begin{thm}
Let $p > 3$ and $A$ be an abelian surface with a rational $p$-isogeny, a polarization degree prime to $p$ where $\overline{\sigma}_A^{\sss} \cong \psi \oplus \rho \oplus \psi^{-1} \chi$ where $\rho$ is absolutely irreducible. Let $\Tilde{\psi}$ be the Teichm\"uller lift of $\psi$. Assume there is a unique Fontaine-Laffaille deformation $\Tilde{\rho}$ of $\rho$ up to strict equivalence class, and 
\begin{enumerate}
    \item $H^{1}_{\Sigma}(\Q,\psi^{2}\chi^{-1}) = 0$;
    \item $\dim_{\F} H^{1}_{\Sigma}(\Q,\overline{\rho}_f(\psi^{-1}))= 1$;
    \item $\# H^1_{\Sigma}(\Q,\Tilde{\rho}(\tpsi \epsilon^{-1}) \otimes E/\mO) \le p$.
\end{enumerate}
Then $R$ in Theorem \ref{intro main} is a DVR and there is a unique isogeny class of $A_{/\Q}$ of conductor $N$ with a rational $p$-isogeny and $\overline{\sigma}_A^{\sss} \cong \psi \oplus \rho \oplus \psi^{-1} \chi$.
\end{thm}

Another context where such representations arise is that of automorphic forms congruent and orthogonal to Ikeda lifts. Let $\phi$ be a classical $\GL_2$-newform of weight $2k'$ and level $1$, $\rho_{\phi}: G_{\Q} \to \GL_2(\overline{\Q_{p}})$ be the $p$-adic representation associated to $\phi$ and $\rho_{\phi}(i)$ be its $i$th Tate twist. Let $K$ be an imaginary quadratic extension of $\Q$. Ikeda \cite{ikeda2001lifting} showed that there exists a Hecke-equivariant lift $I_{\phi}$ such that the associated Galois representation  $\rho_{I_{\phi}}: G_K \to \GL_{6}(\overline{\Q}_p)$ is
$
\rho_{I_{\phi}} \cong \epsilon^{2-k'} \otimes \bigoplus_{i = 0}^{2} \rho_{\phi}(i)|_{G_{K}},
$
where $\epsilon$ is the $p$-adic cyclotomic character. In \cite{brown2020congruence}, Brown and Klosin studied the sufficient conditions when there is an automorphic form $f'$ on $U(3,3)$ that is orthogonal to the space spanned by all the Ikeda lifts such that $f'$ is congruent to $I_{\phi}$.

If such $f'$ exits, the residual Galois representation attached to $f'$, $\overline{\rho}_{f'}: G_{K} \to \GL_6(\overline{\F})$ also satisfies
$
\overline{\rho}_{f'}^{\sss} \cong  \overline{\epsilon}^{2-k'} \otimes \bigoplus_{i = 0}^{2} \overline{\rho}_{\phi}(i)|_{G_{K}}.
$
Let $\sigma$ be the Galois representation attached to $f'$. Assuming that $\sigma$ is absolutely irreducible and there is no newform $h \in S_{2k'}(1)$ such that $\phi \equiv h (\mmod \varpi)$, we have the following theorem stating the $\lambda$-part of the Bloch-Kato conjecture implies modularity of $\sigma$. For more details, see Theorem \ref{Ikeda R=T}.

\begin{thm}
Let $\sigma: G_{K} \to \GL_6(E)$ be absolutely irreducible, Fontaine-Laffaille at $p$ and unramified away from $p$. Suppose $\overline{\sigma}^{\sss}$ $\cong \overline{\epsilon}^{2-k'} \otimes \bigoplus_{i = 0}^{2} \overline{\rho}_{\phi}(i)|_{G_{K}}$.
If $v_1, v_2$ and $L^{\text{alg}}(\cdot,0)$ are as defined in Section \ref{Ikeda lifts} and $\chi_K$ is the quadratic character associated with $K$, and the assumptions in Theorem \ref{Ikeda R=T} are satisfied.

Then if
\begin{enumerate}
    \item $v_1 \ge 1$,
    \item $v_2 = 0$,
    \item $\text{val}_{\varpi}(\# \mO/L^{\text{alg}}(\text{ad}^0 \rho_{\phi},0)) + \text{val}_{\varpi}(\# \mO/L^{\text{alg}}(\text{ad}^0 \rho_{\phi} \otimes \chi_K,0)) = 1$,
    \item $\text{val}_{\varpi}(\# \mO/L^{\text{alg}}(\text{ad}^0 \rho_{\phi}(2),0)) = 0$,
    \item $\text{val}_{\varpi}(\# \mO/L^{\text{alg}}(\text{ad}^0 \rho_{\phi}(3)  \otimes \chi_K,0))  = 0$,
\end{enumerate}
then $\lambda$-part of the Bloch-Kato conjecture implies $\sigma$ is modular.
\end{thm}

\subsection{Methods}
Let us explain the strategy in our proof in more detail. Let $F$ be a number field. Let $E$ be a finite extension of $\Q_p$, $\mO$ be its valuation ring with residue field $k$. Let $\Sigma$ be a finite set of finite places of $F$ which contains $\Sigma_p$, the set of all places above $p$. Let $F_{\Sigma}$ be the maximal extension of $F$ unramified outside of $\Sigma$. Let $G_{\Sigma} = \Gal(F_{\Sigma}/F)$. In this paper, we study the general situation of a residual Galois representation $\overline{\sigma}: \G \to \GL_{n}(k)$ of the form:
$$
\overline{\sigma} = 
\begin{pmatrix}
    \rho_1 & a & b \\ 
    0 & \rho_2 & c \\ 
    0 & 0 & \rho_3 \\ 
\end{pmatrix},
$$
where $\rho_i: \G \to \GL_{n_{i}}(k)$ are absolutely irreducible for each $i$ and $\rho_i|_{G_{F_{v}}} \not \cong \rho_j|_{G_{F_{v}}}$ for $i \ne j$ and $v|p$. We assume it is Fontaine-Laffaile at $p$ and ``$\tau$-self-dual'' for some anti-involution $\tau: \mO[\G] \to \mO[\G]$ which interchanges $\rho_1$ with $\rho_3$ and leaves $\rho_2$ fixed.

Our first result is Proposition \ref{Uniqueness of residual} where we prove uniqueness of $\rho$ under the assumptions that $H^1_{\Sigma}(F, \Hom(\rho_3, \rho_1)) = 0$ and $\dim H^1_{\Sigma}(F, \Hom(\rho_2, \rho_1)) = 
\newline
\dim H^{1}_{\Sigma}(F, \Hom(\rho_3,\rho_2)) = 1$. These Selmer group conditions can be considered a replacement for the ``cyclicity condition'' in \cite{skinner1997ordinary}. 

In the first part of our main result, we consider the reduced universal self-dual Fontaine-Laffaile at $p$ deformation ring $R$ for $\overline{\sigma}$ and prove that under a further assumption that $\dim H^1_{\Sigma}(F, \Hom(\rho_1, \rho_2)) = 1$, its total reducibility ideal $I^{\tot}$ (Definition \ref{red def}) is principal. In fact, we prove this result in a more general context of pseudorepresentations as considered in \cite{wake2019deformation}. Then we control the size of $R/I^{\tot}$, which measures the reducible deformations of $\overline{\sigma}$, by putting restrictions on $\dim H^1_{\Sigma}(F, \Hom(\rho_2, \rho_1))$ and $H^1_{\Sigma}(F,\Hom(\Tilde{\rho_2},\Tilde{\rho_1}) \otimes E/\mO)$. We show that if $\# H^1_{\Sigma}(F,\Hom(\Tilde{\rho_2},\Tilde{\rho_1}) \otimes_{E} E/\mO) \le \# \mO/\varpi \mO$, where $\Tilde{\rho_i}$ is the assumed unique Fontaine--Laffaille deformation of $\rho_i$ to $\GL_{n_{i}}(\mO)$, then $R$ is a DVR and $I^{\tot}$ is its maximal ideal. It follows that there is a unique deformation $\sigma$ of $\overline{\sigma}$ to characteristic zero rings. 

In the second part of our main result, we also consider the case where 
\newline
$\# H^1_{\Sigma}(F,\Hom(\Tilde{\rho_2},\Tilde{\rho_1}) \otimes_{E} E/\mO) \le \# \mO/\varpi^s \mO$ with $s > 1$ when $R$ might not be a DVR. We show that given certain lower bounds on the appropriate Hecke algebra $\mathbf{T}$, we could conclude an $R = \mathbf{T}$ theorem. First, we show that given our new cyclicity condition on Selmer groups, if an absolutely irreducible Galois representation $\sigma': G_{F} \to \GL_{n}(\overline{\Q}_p)$ satisfies $\overline{\sigma^{'}}^{\sss} \cong \rho_1 \oplus \rho_2 \oplus \rho_3$ and is Fontaine--Laffaille at $p$, then there exists a Galois-stable lattice that guarantees the existence of a non-trivial residual extension in the desired direction, resembling Ribet's Lemma. Then we show that if there is a matching bound $\# \mathbf{T}/J \ge \# \mO/\varpi^s \mO$ where $J$ is a relevant congruence ideal, we obtain an $R = \mathbf{T}$ theorem.

In the last sections, we study two examples where our results can be applied. The first example is that of abelian surfaces with rational $p$-isogenies and polarization degrees prime to $p$. We show that given the above Selmer group conditions, the deformation ring $R$ is a DVR. As a Corollary, we show such surfaces are unique up to isogeny. The second example is Galois representations attached to automorphic forms congruent and orthogonal to Ikeda lifts. Applying our $R = \mathbf{T}$ theorem, we show that the Bloch--Kato conjecture implies modularity in this case.

\subsection{Organization}
In Section 2, we introduce the common notations in this paper and introduce Selmer groups and their properties. In Section 3, we define the residual representation setup, its deformation conditions, and conditions under which it is unique. We also introduce the main running assumptions in this paper in this section. In Section 4, we introduce pseudorepresentations and the reducibility ideal, and the conditions under which the reducibility ideal is principal. In Section 5, we study the reducible deformations and prove our main theorem 

\subsection{Acknowledgement}
The author would like to thank Krzysztof Klosin for initially suggesting the problem and for detailed guidance. We also thank the anonymous referee for their helpful comments. We also would like to thank Tobias Berger, Ken Kramer, Carl Wang Erickson, Jaclyn Lang, Preston Wake and Ga\"{e}tan Chenevier for enlightening discussions and comments. Finally, we are grateful to Shiva Chidambaram and Andrew Sutherland for their help with computing examples of suitable abelian surfaces.

\section{Notation and Selmer Groups} 
In this section, we introduce the recurring notations in the paper as well as various types of Selmer groups. We begin by defining Fontaine--Laffaille local finite structures for \( \ell = p \) and unramified local finite structures for primes \( \ell \neq p \). These lead to the definitions and discussions of the Bloch--Kato Selmer groups and the ``relaxed'' Selmer groups in the global context. These Selmer groups will be instrumental in the subsequent discussions of the reducibility and modularity of Galois representations.

\subsection{Notation}
Let $p > 2$ be a prime number. Let $E$ be a finite extension of $\Q_p$, $\mO$ be its valuation ring with residue field $k$ and uniformizer $\varpi$.

Let $F$ be a number field. For each place $v$, we fix the embeddings $\overline{F} \hookrightarrow \overline{F}_{v}$. Let $\Sigma$ be a finite set of finite places of $F$ which contains $\Sigma_p$, the set of all places above $p$. Let $F_{\Sigma}$ be the maximal extension of $F$ unramified outside of $\Sigma$. Let $G_{\Sigma} = \Gal(F_{\Sigma}/F)$.

\subsection{Local Cohomology}{\label{Local Cohomology}}
We call $M$ a $p$-adic $G_{\Sigma}$-module over $\mO$ if $M$ is an $\mO$-module with an $\mO$-linear action of $G_{\Sigma}$ and is one of the following:
 \begin{enumerate}
     \item a finitely generated $\Z_p$-module,
     \item a finite dimensional $\Q_p$-vector space,
     \item a discrete torsion $\Z_p$-module.
 \end{enumerate}
 
In this section, suppose $\ell$ is a prime and $K$ is a finite extension over $\Q_{\ell}$. Let $V$ be a vector space over $E$ with a continuous linear action of $G_K$. Let $T \subseteq V$ be a $G_K$-stable $\mO$-lattice and let $W = V/T$. For $n \ge 1$, define
$$
W_n = \{x \in W: \varpi^n x = 0 \} \cong T/\varpi^n T.
$$

Following the terminology of Bloch--Kato \cite[Section 3]{bloch2007functions} and Rubin \cite[\S 1.3.2]{rubin2014euler}, we define special subgroups $H^1_f(K,M)$ of certain cohomology groups $H^1(K,M)$.
\begin{defn}
A \textbf{finite structure} on $M$ is a choice of an $\mO$-submodule $H^1_f(K,M) \subseteq H^1(K,M)$.
\end{defn}
\begin{rmk}
We use the term ``finite structure'' following \cite{bloch2007functions} and \cite{rubin2014euler} for consistency. It can be understood as a ``finite Selmer structure'' and should not be confused with a finite-flat structure.
\end{rmk}
We will discuss two different local finite structures on $M$, depending on whether $\ell = p$ or $\ell \ne p$.

\subsubsection{\textsf{$\ell = p$ and Fontaine--Laffaille}} 

Suppose $K$ is unramified over $\Q_p$. Following Bloch and Kato \cite{bloch2007functions}, we define the \textbf{Fontaine--Laffaille local finite structure} as follows.

We set 
$$
H^1_f(K,V) = \ker(H^1(K,V) \to H^1(K,B_{\text{crys}} \otimes V))
$$
(see Section 1 in \cite{fontaine1987p} for details of $B_{\text{crys}}$) and denote $H^1_f(K,T)$ and $H^1_f(K,W)$ as the inverse image and the image of $H^1_f(K,V)$ via the natural maps
$$
H^1(K,T) \to H^1(K,V) \to H^1(K,W),
$$
respectively. $V$ is a \textbf{Fontaine--Laffaille at $p$} $G_K$-representation over $\Q_p$ if for all places $v|p$, $\text{Fil}^0 D = D$ and $\text{Fil}^{p-1} D = (0)$ for the filtered vector space $D = (B_{\text{crys}} \otimes_{\Q_p} V)^{G_{F_{v}}}$ defined by Fontaine.

\begin{rmk}
Alternatively, we could follow the theory of Fontaine--Laffaille \cite{Fontaine1982} and Section 2.4.1 in \cite{clozel2008automorphy}. For finitely generated $p$-adic $G_K$-modules, let $\mathcal{MF}_{\mO}$ denote the category of finitely generated $\mO$-modules $M$ together with 
\begin{itemize}
    \item a decreasing filtration $\text{Fil}^i M$ by $\mO$-submodules which are $\mO$-direct summands with $\text{Fil}^0 M = M$ and $\text{Fil}^{p-1} M = (0)$;
    \item Frobenius linear maps $\Phi^i: \text{Fil}^i M \to M$ with $\Phi^i|_{\text{Fil}^{i+1} M} = p \Phi^{i+1}$ and $\sum_{i} \Phi^i \text{Fil}^i M = M$.
\end{itemize}

There is an exact, fully faithful, covariant functor $\mathbf{G}$ from $\mathcal{MF}_{\mO}$ to the category of finitely generated $\mO$-modules with a continuous action by $G_{K}$. Its essential image is closed under taking sub-objects and quotients. The essential image of $\mathbf{G}$ also contains quotients of lattices in Fontaine--Laffaille at $p$ representations.

For any $p$-adic $G_K$-module $M$ of finite cardinality that is in the essential image of $\mathbf{G}$, we define the Fontaine--Laffaille local finite structure $H^1_f(K,M)$ as the image of $\Ext^1_{\mathcal{MF_{O}}}(1_{\text{FD}},D)$ in $H^1(K,M) \cong \Ext^1_{\mathcal{O}[G_K]}(1,M)$, where $M = \mathbf{G}(D)$ and $1_{\text{FD}}$ is the unit filtered Dieudonné module defined in Lemma 4.4 in \cite{bloch2007functions}.

Though two different definitions of $H^1(K,W_n)$ were provided: one through the pull-back of $H^1_f(K,W)$ using the identification $W = \displaystyle{\lim_{\longrightarrow}} W_n$, and the other utilizing the $\mathbf{G}$-functor, these definitions were shown to agree (cf. Proposition 2.2 in \cite{diamond2004tamagawa}).
\end{rmk}

\subsubsection{\textsf{$\ell \ne p $}} 
For primes $\ell \ne p$, we define the \textbf{unramified local-finite structure} on any $p$-adic $G_K$-module $M$ over $\mO$ as 
$$
H^1_{\text{ur}}(K,M) = \ker(H^1(K,M) \to H^1(K_{\text{ur}},M)),
$$
where $K_{\text{ur}}$ is the maximal unramified extension of $K$. 

We again follow Bloch--Kato's definition of the finite structures on $V,T$, and $W$. Let
$$
H^1_f(K,V) = H^1_{\ur}(K,V),
$$
and $H^1_f(K,T)$ and $H^1_f(K,W)$ be the inverse image and the image of $H^1_f(K,V)$ via the natural maps
$$
H^1(K,T) \to H^1(K,V) \to H^1(K,W).
$$
We will call these minimally ramified structures on $V,T,W$ respectively. 




\subsection{Global Selmer Groups}{\label{Global Selmer Groups}}
Assume $p$ is unramified at $F/\Q$. 
Let $G_{\Sigma} = \Gal(F_{\Sigma}/F)$. Let $M$ be a $p$-adic $G_{\Sigma}$-module. 

We define the local finite structures on $M$ as follows:
\begin{itemize}
    \item if $v|p$, $H^1_f(F_v,M)$ is the Fontaine--Laffaille local finite structure;
    \item if $v \nmid p$, $H^1_f(F_v,M)$ is the unramified local-finite structure;
    \item if $v | \infty$, $H^1_f(F_{v}, M) = 0$.
\end{itemize}

The \textbf{Bloch--Kato Selmer group} $H^1_{f}(F,M)$ is defined as 
$$
H^1_{f}(F,M) = \ker(H^1(G_{\Sigma},M) \to \prod_{v \in \Sigma}(H^1(F_{v},M)/H^1_f(F_v,M))).
$$
and the \textbf{``relaxed'' Selmer group} $H^1_{\Sigma}(F,M)$ is defined with no condition imposed at primes $\ell \in \Sigma \setminus \Sigma_{p}$:
$$
H^1_{\Sigma}(F,M) = \ker(H^1(G_{\Sigma},M) \to \prod_{v \in \Sigma_{p}}(H^1(F_{v},M)/H^1_f(F_v,M))).
$$

\begin{lem}{\label{torsion Selmer is Selmer torsion}}
Suppose $W^{\G} = 0$. Then $H^1_{\Sigma}(F,W_n) = H^1_{\Sigma}(F,W)[\varpi^n]$.
\end{lem}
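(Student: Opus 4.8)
The plan is to exploit the long exact sequence in Galois cohomology coming from the short exact sequence of $G_{\Sigma}$-modules
\[
0 \to W_n \to W \xrightarrow{\varpi^n} W \to 0,
\]
where the surjection is multiplication by $\varpi^n$ (here we use that $W = V/T$ is $\varpi$-divisible, so this is exact on the right). Since $W^{\G} = 0$ by hypothesis, the associated long exact sequence gives an injection $0 \to H^1(G_{\Sigma}, W_n) \to H^1(G_{\Sigma}, W)$ whose image is exactly $H^1(G_{\Sigma}, W)[\varpi^n]$, the $\varpi^n$-torsion. So the claim for the full cohomology groups (no Selmer conditions) is immediate; the content is to check that this identification is compatible with the local conditions at $v \mid p$ that cut out $H^1_{\Sigma}$.

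First I would record that the map $H^1(G_{\Sigma}, W_n) \hookrightarrow H^1(G_{\Sigma}, W)$ carries $H^1_{\Sigma}(F, W_n)$ into $H^1_{\Sigma}(F, W)[\varpi^n]$: a class in $H^1_{\Sigma}(F, W_n)$ is locally in $H^1_f(F_v, W_n)$ for each $v \mid p$, and by definition $H^1_f(F_v, W_n)$ is the preimage of $H^1_f(F_v, W)$ under $H^1(F_v, W_n) \to H^1(F_v, W)$ (both defined via pullback from $W = \varinjlim W_n$, which, by the remark in Section \ref{Local Cohomology}, agrees with the $\mathbf{G}$-functor definition), so its image lands in $H^1_f(F_v, W)$; hence the image class lies in $H^1_{\Sigma}(F, W)$, and it is visibly $\varpi^n$-torsion. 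Conversely, given $x \in H^1_{\Sigma}(F, W)[\varpi^n]$, by the exact sequence above there is a unique $\tilde{x} \in H^1(G_{\Sigma}, W_n)$ mapping to $x$; I need to show $\tilde{x}$ is locally finite at each $v \mid p$, i.e. $\mathrm{loc}_v(\tilde{x}) \in H^1_f(F_v, W_n)$. We know $\mathrm{loc}_v(x) \in H^1_f(F_v, W)$, and $\mathrm{loc}_v(\tilde{x})$ maps to $\mathrm{loc}_v(x)$ under $H^1(F_v, W_n) \to H^1(F_v, W)$, so by the very definition of $H^1_f(F_v, W_n)$ as the preimage of $H^1_f(F_v, W)$, we get $\mathrm{loc}_v(\tilde{x}) \in H^1_f(F_v, W_n)$ as needed. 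This gives the reverse inclusion and completes the proof.

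The one point that requires a little care — and which I expect to be the main (mild) obstacle — is the exactness on the right of $W \xrightarrow{\varpi^n} W$ and, relatedly, making sure the local condition $H^1_f(F_v, W_n)$ really is defined as the honest preimage of $H^1_f(F_v, W)$ rather than, say, the image of $H^1_f(F_v, W_n)$ forward (the two a priori differ). Here the hypothesis $W^{\G} = 0$ is exactly what removes the ambiguity on the global side (it kills the connecting map $W^{\G} \to H^1(G_{\Sigma}, W_n)$, equivalently forces $H^1(G_{\Sigma},W_n) \to H^1(G_{\Sigma},W)$ injective), and on the local side the definitions in Section \ref{Local Cohomology} are set up precisely so that $H^1_f(F_v, W_n)$ is the inverse image of $H^1_f(F_v, W)$; so no genuine difficulty arises, but it is worth spelling out since the analogous statement can fail without the $W^{\G} = 0$ assumption. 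No local analysis at the primes $\ell \in \Sigma \setminus \Sigma_p$ is needed because $H^1_{\Sigma}$ imposes no condition there.
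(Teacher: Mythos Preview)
Your proof is correct and follows essentially the same approach as the paper's: the paper simply cites Rubin's \emph{Euler Systems}, Lemma 1.2.2, to obtain the exact sequence $0 \to W^{\G}/\varpi^n W^{\G} \to H^1_{\Sigma}(F,W_n) \to H^1_{\Sigma}(F,W)[\varpi^n] \to 0$ and then concludes; you have spelled out the argument behind that citation, including the compatibility of the local conditions at $v \mid p$ (which indeed follows because $H^1_f(F_v,W_n)$ is defined as the pullback of $H^1_f(F_v,W)$, as you note).
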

\begin{proof}
Suppose $n$ is nonzero, following the same argument as in \cite{rubin2014euler} Lemma 1.2.2, there is an exact sequence
$$
0 \to W^{\G}/n  W^{\G} \to H^1_{\Sigma}(K,W_n) \to H^1_{\Sigma}(K,W)[\varpi^n] \to 0.
$$
In particular, if $W^{\G} = 0$. Then $H^1_{\Sigma}(F,W_n) = H^1_{\Sigma}(F,W)[\varpi^n]$.
\end{proof}

The two Selmer groups concide in certain situations. For example, if $\Sigma = \Sigma_p$, then $H^1_{\Sigma}(F,M) = H^1_{f}(F,M)$. The following Lemma is from 5.6, 5.7 in \cite{berger2013deformation}. It identifies sufficient conditions where the two Selmer groups coincide.

\begin{lem}{\label{two selmer groups}}
Let $V^* = \Hom_{\mO}(V,E(1))$, $T^* = \Hom_{\mO}(T,\mO(1))$, and $W^* = V^*/T^*$. Define the $v$-Euler factor as
$$
P_{v} (V^*,X) = \det (1 - X \Frob_v|_{(V^*)^{I_{v}}}).
$$
Then $H^1_{\Sigma}(F,W) = H^1_f(F,W)$ if for all places $v \in \Sigma, v \nmid p$ we have 
\begin{enumerate}
    \item $P_{v}(V^*,1) \in \mO^{*}$,
    \item $\text{Tam}^0_v(T^*) = 1$,
\end{enumerate}
where the Tamagawa factor $\text{Tam}^0_v(T^*)$ equals $\# H^1(F_v,T^*)_{\text{tor}} \times |P_{v}(V^*,1)|_p$ (cf. \cite{fontaine1992Lvalues}). Moreover, if $W^{I_{v}}$ is divisible, then $\text{Tam}^0_v(T^*) = 1$.
\end{lem}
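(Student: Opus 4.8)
The plan is to reduce the identity $H^1_{\Sigma}(F,W) = H^1_f(F,W)$ to a local statement at the places $v \in \Sigma$ with $v \nmid p$, and then, via local Tate duality, to convert that local statement into the vanishing of the Bloch--Kato lattice group $H^1_f(F_v, T^*)$. First, the two Selmer groups are cut out by the same local conditions at the places above $p$, while $H^1_{\Sigma}$ imposes nothing at $v \in \Sigma \setminus \Sigma_p$; hence $H^1_f(F,W) \subseteq H^1_{\Sigma}(F,W)$, with equality as soon as $H^1(F_v,W)/H^1_f(F_v,W) = 0$ for every such $v$. So it suffices to prove this local vanishing under hypotheses (1) and (2).

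Next I would invoke local duality at $v \nmid p$. Since $W = V/T$ and $T^* = \Hom_{\mO}(T,\mO(1))$ are Tate-dual $p$-adic $G_{F_v}$-modules, the cup-product pairing $H^1(F_v,W) \times H^1(F_v,T^*) \to E/\mO$ is perfect, and the minimally ramified (Bloch--Kato) finite/singular structures on $W$ and on $T^*$ are exact orthogonal complements of each other under it (propagation of local conditions along exact sequences and their compatibility with duality, cf.\ \cite{rubin2014euler}). Therefore $H^1(F_v,W)/H^1_f(F_v,W)$ is the Pontryagin dual of $H^1_f(F_v,T^*)$, and the problem becomes: show $H^1_f(F_v,T^*) = 0$ for $v \in \Sigma$, $v \nmid p$.

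To this end I would analyse $H^1_f(F_v,T^*)$ through the exact sequence $0 \to T^* \to V^* \to W^* \to 0$. Taking $G_{F_v}$-cohomology, the kernel of $H^1(F_v,T^*) \to H^1(F_v,V^*)$ is precisely $H^1(F_v,T^*)_{\text{tor}}$ (it is the image of the torsion group $(W^*)^{G_{F_v}}$, while $H^1(F_v,V^*)$ is an $E$-vector space). Hypothesis (1), $P_v(V^*,1) \in \mO^{*}$, implies in particular $P_v(V^*,1) \neq 0$, i.e.\ $\Frob_v - 1$ acts invertibly on $(V^*)^{I_v}$, so $(V^*)^{G_{F_v}} = 0$; since $v \nmid p$, the $E$-dimension of $H^1_f(F_v,V^*) = H^1_{\ur}(F_v,V^*)$ equals $\dim_E (V^*)^{G_{F_v}} = 0$, so $H^1_f(F_v,V^*) = 0$. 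Consequently $H^1_f(F_v,T^*)$, being the preimage of $H^1_f(F_v,V^*)$ under $H^1(F_v,T^*) \to H^1(F_v,V^*)$, equals $H^1(F_v,T^*)_{\text{tor}}$. Finally, $P_v(V^*,1) \in \mO^{*}$ gives $|P_v(V^*,1)|_p = 1$, so the identity $\text{Tam}^0_v(T^*) = \# H^1(F_v,T^*)_{\text{tor}} \cdot |P_v(V^*,1)|_p$ together with hypothesis (2) forces $\# H^1(F_v,T^*)_{\text{tor}} = 1$. Hence $H^1_f(F_v,T^*) = 0$, which completes the argument. For the last assertion, divisibility of $W^{I_v} = (V/T)^{I_v}$ forces the connecting map $(V/T)^{I_v} \to H^1(I_v,T)$ to vanish, so $H^1(I_v,T)$ is torsion-free; unwinding the definition of the Tamagawa factor at $v$ (equivalently, appealing to \cite{fontaine1992Lvalues}) this is exactly the condition $\text{Tam}^0_v(T^*) = 1$.

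The step I expect to require the most care is the duality input in the second paragraph: one must be sure that the two local conditions in play --- the \emph{image} structure defining $H^1_f(F_v,W)$ from $H^1_{\ur}(F_v,V)$, and the \emph{preimage} structure defining $H^1_f(F_v,T^*)$ from $H^1_{\ur}(F_v,V^*)$ --- are honest orthogonal complements, not merely commensurable, so that the local quotient is genuinely the Pontryagin dual of $H^1_f(F_v,T^*)$ rather than of the same order only up to a further Tamagawa-type index. Granting that standard compatibility, the remainder is a diagram chase with the defining exact sequence and the local Euler characteristic formula, with the hypotheses (1) and (2) entering solely to annihilate the residual torsion $H^1(F_v,T^*)_{\text{tor}}$.
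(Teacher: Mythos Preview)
The paper does not actually prove this lemma: it simply records that the statement is taken from Lemmas~5.6 and~5.7 of \cite{berger2013deformation}. Your argument supplies the details and follows the standard route (local Tate duality to reduce to $H^1_f(F_v,T^*)=0$, then kill that group via the torsion interpretation of the Tamagawa factor), which is essentially what one finds in the cited reference; so on the main claim your proof is correct and aligned with the intended one.

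One small imprecision worth tightening in the ``Moreover'' clause: from divisibility of $W^{I_v}$ you conclude that the connecting map $W^{I_v}\to H^1(I_v,T)$ vanishes and hence that $H^1(I_v,T)$ is torsion-free. The first inference is not quite right (a divisible group can map nontrivially to $H^1(I_v,T)$, which need not be finitely generated over $\mO$ because of wild inertia), and in any case it is $H^1(I_v,T)^{\Frob_v=1}$, not $H^1(I_v,T)$ itself, that enters. The clean statement is rather that $W^{I_v}$ divisible forces $H^1_{\ur}(F_v,W)=W^{I_v}/(\Frob_v-1)W^{I_v}$ to be divisible, hence equal to its divisible part $H^1_f(F_v,W)$; dualizing, $H^1_f(F_v,T^*)=H^1_{\ur}(F_v,T^*)$, and the identity in \cite{fontaine1992Lvalues} expressing $\text{Tam}^0_v(T^*)$ as the index $[H^1_f(F_v,T^*):H^1_{\ur}(F_v,T^*)]$ then gives $\text{Tam}^0_v(T^*)=1$. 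Your deferral to \cite{fontaine1992Lvalues} is the right move; just replace the torsion-free claim for $H^1(I_v,T)$ with this divisibility argument on the $W$ side.
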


\section{Deformation Problem and Main Assumptions}
In this section, we outline the key assumptions and structures for studying deformations of residual Galois representations. We begin by introducing the residual representation \( \sigma \), which has three Jordan–Hölder factors. The conditions under which this residual representation is unique are established, using Fontaine--Laffaille theory and assumptions related to Selmer groups. The running assumptions in the paper are stated in the main assumptions (Assumption \ref{main assumptions}), which include the vanishing and constraints on certain Selmer groups.

\begin{defn}
Let $n \ge 3$. Let $p > n$ be a prime number. Let $\hat{\mathcal{C}}_{W(k)}$ be the category of complete Noetherian commutative local $W(k)$-algebras $(A, \m_{A})$ with residue field $k$.
\end{defn}

\subsection{Residual Representation}{\label{general residual set up}}
Consider the following residual representation: $\overline{\sigma}: \G \to \GL_{n}(k)$,
\begin{equation}{\label{reducible}}
\overline{\sigma} = 
\begin{pmatrix}
    \rho_1 & a & b \\ 
    0 & \rho_2 & c \\ 
    0 & 0 & \rho_3 \\ 
\end{pmatrix},
\end{equation}
where $\rho_i|_{G_{F_{v}}}: G_{F_{v}} \to \GL_{n_{i}}(k)$ are absolutely irreducible for each $i$ and $\rho_i|_{G_{F_{v}}} \not \cong \rho_j|_{G_{F_{v}}}$ for $i \ne j$ and $v|p$. 

We recall the definition of anti-involution below.
\begin{defn}
An \textbf{anti-involution} $\tau$ on a ring \( R \) is a map \( \tau: R \to R \) such that:
\begin{enumerate}
    \item \(\tau(ab) = \tau(b)\sigma(a)\) for all \( a, b \in R \),
    \item \(\tau(\tau(a)) = a\) for all \( a \in R \).
\end{enumerate}
\end{defn}

\noindent Suppose $\overline{\sigma}$ satisfies:
\begin{itemize}
    \item $\overline{\sigma}$ is Fontaine--Laffaille at $p$;
    \item $\begin{pmatrix}
    \rho_1 & a \\
    0 & \rho_2
    \end{pmatrix}
    $
    and 
    $
    \begin{pmatrix}
        \rho_2 & c \\
        0 & \rho_3
    \end{pmatrix}
    $
    are non-split extensions;
    \item There is some anti-involution $\tau$ of $\R[G_{\Sigma}]$ permuting the Jordan-H\"{o}lder factors of $\overline{\sigma}$, i.e.,
    $$
        \tau(\rho_1) = \rho_3, \tau(\rho_2) = \rho_2, \tau(\rho_3) = \rho_1,
    $$
    for some $\R \in \text{ob}(\hat{\C}_{W(k)})$, and $\overline{\sigma}$ is $\tau$\textbf{-self-dual}, i.e., $\tr \overline{\sigma} = \tr \overline{\sigma} \circ \tau$.
\end{itemize}
These assumptions are satisfied in many cases. In particular, the representations attached to abelian surfaces with $p$-rational isogenies and a polarization degree prime to $p$ or automorphic forms congruent to certain Ikeda lifts are examples of such a setup. See Section \ref{Abelian surface}, \ref{Ikeda lifts} for more details.

\subsection{Main Assumptions}
\begin{assumption}
{\label{main assumptions}}
Throughout this paper, we will assume
\begin{enumerate}
    \item $H^{1}_{\Sigma} (F, \Hom(\rho_3, \rho_1)) = 0$;
    \item $\dim H^{1}_{\Sigma} (F, \Hom(\rho_2, \rho_1)) = \dim H^{1}_{\Sigma} (F, \Hom(\rho_3, \rho_2)) = 1$;
    \item $\dim H^{1}_{\Sigma} (F, \Hom(\rho_1, \rho_2)) = 1$;
    \item if $n_i$ is the dimension of $\rho_i$, and $R_{i,\FL}$ denotes the universal deformation ring for $\rho_i: \G \to \GL_{n_{i}}(k)$ with the condition that the deformation is Fontaine--Laffaille at $p$, then $R_{i,\FL} = \mO$. Equivalently, there is a unique such deformation of $\rho_i$ to $\mO$ up to a strict equivalence class. Denote $\Tilde{\rho}_i$ as the unique deformation of $\rho_i$ to $\GL_{n_{i}}(\mO)$.
\end{enumerate}
\end{assumption}

\begin{rmk}
In application, if $\overline{\sigma}$ is $\tau$-self-dual for some anti-involution $\tau$, one can achieve $H^{1}_{\Sigma} (F, \Hom(\rho_3, \rho_2)) = H^{1}_{\Sigma} (F, \Hom(\rho_1, \rho_2))$ (cf. \cite{bellaiche2009families} Lemma 1.8.5(ii) and our Proposition \ref{Ext with C pseudo}). See Section (\ref{Abelian surface})(\ref{Ikeda lifts}).
\end{rmk}

\begin{rmk}{\label{comments on Selmer group sizes}}
In our experience, we expect the vanishing condition (1) and cyclicity conditions of $H^1_{\Sigma}(\cdot)$ to hold often. For example, suppose one considers an irreducible Galois representation $\overline{\rho}_{p,E}: G_{\Q} \to \GL_2(\F)$ attached to an elliptic curve $E$ that does not have rational $p$-torsion, and suppose $H^{1}_{\Sigma} (\Q, \overline{\rho}_{p,E}) = H^{1}_f (\Q, \overline{\rho}_{p,E})$, then in this case the $p$-Selmer group of $E$ satisfies
$$
\text{Sel}_p(E)[p] = H^{1}_{\Sigma} (\Q, \overline{\rho}_{p,E}) 
$$
and fits into the exact sequence 
\begin{equation}{\label{eq 2.6}}
    0 \to E[\Q]/pE[\Q] \to \text{Sel}_p(E)[p] \to \Sha(E)[p] \to 0.
\end{equation}

If $\Sha(E)$ has trivial $p$-torsions, in particular, if $\Sha(E)$ itself is trivial, then $\text{Sel}_p(E)[p]$ vanishes if rank($E$) = $0$ and $\dim \text{Sel}_p(E)[p] = 1$ if rank($E$) = $1$. Using all curves with conductors $< 500000$ from \cite{lmfdb}, there are around $30.9\%$ curves with $\Sha(E)$ trivial and rank $0$, and $49.4\%$ curves with $\Sha(E)$ trivial and rank $1$. About $92\%$ curves have trivial $\Sha(E)$. Overall, it is proved in \cite{bhargava2013average} that at least 83.75\% curves have rank $0$ or $1$ and it is conjectured, originating in works of \cite{katz2023random}, \cite{katz2023random} and later in \cite{bhargava2013modeling}, that $50\%$ of all elliptic curves over $\Q$ have rank $0$ and $50\%$ have rank $1$. 

In general, the Poonen–Rains heuristics \cite[Conjecture 1.1(b)]{poonen2012random} conjectures that for the average size of the $\text{Sel}_p(E)$ is $p + 1$ for any prime number $p$, which is verified for a few cases, for example, \cite{bhargava2013average}. 
\end{rmk}

\subsection{Uniqueness of the Residual Representation}
The goal of this section is to show that the residual representation is unique up to isomorphism under Assumptions \ref{main assumptions} (1)(2). 

\begin{lem}{\label{Selmer exact sequence}}
There is an exact sequence
\begin{equation}\label{Selmer exact}
    H^{1}_{\Sigma}(F, \Hom(\rho_3, \rho_1)) \to H^{1}_{\Sigma}(F,\Hom(\rho_3,\begin{pmatrix}
    \rho_1 & a \\
    0 & \rho_2
    \end{pmatrix})) \to H^{1}_{\Sigma}(F, \Hom(\rho_3, \rho_2)).
\end{equation}
\end{lem}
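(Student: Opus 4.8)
The plan is to obtain \eqref{Selmer exact} as the Selmer-group shadow of a long exact Galois cohomology sequence. Write $B'=\begin{pmatrix}\rho_1 & a\\ 0 & \rho_2\end{pmatrix}$. Since $B'$ is an extension of $\rho_2$ by $\rho_1$, there is a short exact sequence of $\G$-modules $0\to\rho_1\to B'\to\rho_2\to 0$; applying $\Hom(\rho_3,-)$, which is exact because it is tensoring over $k$ with the finite-dimensional space underlying $\rho_3^{\vee}$, yields a short exact sequence of $\G$-modules
$$
0\longrightarrow\Hom(\rho_3,\rho_1)\longrightarrow\Hom(\rho_3,B')\longrightarrow\Hom(\rho_3,\rho_2)\longrightarrow 0.
$$
Its long exact cohomology sequence contains the exact segment
$$
H^{1}(\G,\Hom(\rho_3,\rho_1))\xrightarrow{\ \iota_*\ }H^{1}(\G,\Hom(\rho_3,B'))\xrightarrow{\ \pi_*\ }H^{1}(\G,\Hom(\rho_3,\rho_2)),
$$
and I would obtain \eqref{Selmer exact} by restricting this segment to the relaxed Selmer subgroups.

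That restriction is legitimate because the Fontaine--Laffaille local condition is functorial, so $\iota_*$ and $\pi_*$ carry $H^{1}_{\Sigma}(F,-)$ into $H^{1}_{\Sigma}(F,-)$, and $\pi_*\iota_*=0$ already holds on $H^{1}(\G,-)$. For exactness at the middle term, take $x\in H^{1}_{\Sigma}(F,\Hom(\rho_3,B'))$ with $\pi_*x=0$. Since $H^{1}_{\Sigma}(F,\Hom(\rho_3,\rho_2))\hookrightarrow H^{1}(\G,\Hom(\rho_3,\rho_2))$, the class $x$ dies in $H^{1}(\G,\Hom(\rho_3,\rho_2))$, so $x=\iota_* y$ for some $y\in H^{1}(\G,\Hom(\rho_3,\rho_1))$ by the segment above. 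The remaining point is that such a $y$ is automatically in $H^{1}_{\Sigma}(F,\Hom(\rho_3,\rho_1))$, i.e. $y_{v}\in H^{1}_{f}(F_{v},\Hom(\rho_3,\rho_1))$ for every $v\mid p$; because $\iota_* y_{v}=x_{v}\in H^{1}_{f}(F_{v},\Hom(\rho_3,B'))$, this follows from the local identity
$$
\iota_*^{-1}\bigl(H^{1}_{f}(F_{v},\Hom(\rho_3,B'))\bigr)=H^{1}_{f}(F_{v},\Hom(\rho_3,\rho_1)) \qquad (v\mid p).
$$

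To establish that local identity I would use the Fontaine--Laffaille machinery. Restricted to $G_{F_{v}}$, all three modules lie in the essential image of the functor $\mathbf{G}$ --- each is a subquotient of $\overline{\sigma}^{\vee}\otimes\overline{\sigma}$, which is Fontaine--Laffaille at $p$ under the running hypotheses, and this is also exactly what makes the relaxed Selmer groups in \eqref{Selmer exact} meaningful --- so the short exact sequence above comes from a short exact sequence $0\to D_{A}\to D_{B}\to D_{C}\to 0$ in $\mathcal{MF}_{\mO}$. Because $\mathbf{G}$ is exact and fully faithful, one has $\Hom_{\mathcal{MF}_{\mO}}(1_{\text{FD}},D_{(-)})\xrightarrow{\ \sim\ }H^{0}(F_{v},-)$ and $\Ext^{1}_{\mathcal{MF}_{\mO}}(1_{\text{FD}},D_{(-)})\hookrightarrow H^{1}(F_{v},-)$, compatibly with the long exact $\Ext$-sequence of $0\to D_{A}\to D_{B}\to D_{C}\to 0$ and the long exact cohomology sequence of $0\to A\to B\to C\to 0$; since $H^{1}_{f}(F_{v},-)$ is by definition the image of $\Ext^{1}_{\mathcal{MF}_{\mO}}(1_{\text{FD}},D_{(-)})$ in $H^{1}(F_{v},-)$, a diagram chase through the resulting ladder yields the identity. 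Here the connecting map $H^{0}(F_{v},\Hom(\rho_3,\rho_2))\to H^{1}(F_{v},\Hom(\rho_3,\rho_1))$, which accounts for the part of $H^{1}(F_{v},\Hom(\rho_3,\rho_1))$ killed by $\iota_*$, lands in $H^{1}_{f}$ precisely because it factors through $\Ext^{1}_{\mathcal{MF}_{\mO}}(1_{\text{FD}},D_{A})$.

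I expect the one genuinely delicate step to be this local strictness of the Fontaine--Laffaille condition along the short exact sequence; the rest --- an exact functor, the long exact cohomology sequence, and functoriality of the Selmer conditions --- is formal. I would therefore isolate the local statement as a brief auxiliary lemma, prove it from the exactness and full faithfulness of $\mathbf{G}$ as above, and verify once at the outset that $\Hom(\rho_3,B')|_{G_{F_{v}}}$ does lie in the essential image of $\mathbf{G}$, so that $H^{1}_{f}$ is genuinely defined for it.
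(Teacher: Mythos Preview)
Your proposal is correct and follows essentially the same strategy as the paper: establish that the Fontaine--Laffaille local condition at $v\mid p$ is strict along the short exact sequence $0\to\Hom(\rho_3,\rho_1)\to\Hom(\rho_3,B')\to\Hom(\rho_3,\rho_2)\to 0$, and then deduce the Selmer sequence from the long exact cohomology sequence. The paper carries this out via the two--variable groups $\Ext^1_{\mathcal{MF}_{\mO}}(M_3,M_j)$ (with $M_j=\mathbf{G}^{-1}(\rho_j)$), uses the hypothesis $\rho_3|_{G_{F_v}}\not\cong\rho_2|_{G_{F_v}}$ to force $\Hom_{\mathcal{MF}_{\mO}}(M_3,M_2)=0$ and hence a short exact sequence of $\Ext^1$'s, and then invokes Lemma~2.3 of \cite{berger2020deformations} for the passage to $H^1_\Sigma$. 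You instead work with the one--variable groups $\Ext^1_{\mathcal{MF}_{\mO}}(1_{\mathrm{FD}},D_{(-)})$ and do the global diagram chase by hand; in particular, your argument for the preimage identity $\iota_*^{-1}H^1_f=H^1_f$ goes through without ever invoking $\rho_3|_{G_{F_v}}\not\cong\rho_2|_{G_{F_v}}$, since full faithfulness of $\mathbf{G}$ on $H^0$ already handles the connecting map. This makes your version slightly more self--contained and marginally more general, at the cost of requiring the internal $\Hom$ modules themselves to lie in the essential image of $\mathbf{G}$ (which you rightly flag); the paper's two--variable formulation sidesteps that point by never forming $D_{\Hom(\rho_3,-)}$ explicitly.
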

\begin{proof}
As $\overline{\sigma}$ is Fontaine--Laffaille at $p$ and the essential image of $\mathbf{G}$ is closed under taking subquotients,  
$\begin{pmatrix}
\rho_1 & a \\
0 & \rho_2
\end{pmatrix}$
is Fontaine--Laffaille at $p$, and $\rho_1, \rho_2, \rho_3$ are all Fontaine--Laffaille at $p$. Consider the exact sequence 
$$
0 \to \rho_1 \to 
\begin{pmatrix}
\rho_1 & a \\
0 & \rho_2
\end{pmatrix} 
\to \rho_2 \to 0.
$$
and the corresponding exact sequence of the pre-images via $\mathbf{G}$:
\begin{equation}{\label{eq 2.6}}
    0 \to M_1 \to M \to M_2 \to 0
\end{equation}

Let $M_3$ be the filtered Dieudonn\'{e} module corresponding to $\rho_3$.
If we apply the homological functor $\Hom_{\mathcal{MF}_{\mO}}(M_3,-)$ to equation \eqref{eq 2.6}, then we get the following long exact sequence in $\mathcal{MF}_{\mO}$ \cite[Section 13.27]{stacks-project}:
\begin{align*}
    0 &\to \Hom_{\mathcal{MF}_{\mO}}(M_3,M_1) \to \Hom_{\mathcal{MF}_{\mO}}(M_3,M) \to \Hom_{\mathcal{MF}_{\mO}}(M_3,M_2) \\
    &\to \Ext^1_{\mathcal{MF}_{\mO}}(M_3,M_1) \to \Ext^1_{\mathcal{MF}_{\mO}}(M_3,M) \to \Ext^1_{\mathcal{MF}_{\mO}}(M_3,M_2) \\
    &\to \Ext^2_{\mathcal{MF}_{\mO}}(M_3,M_1) \to ...
\end{align*}
where $\Ext^i_{\mathcal{MF}_{\mO}}(M_3,-)$ is the set of degree $i$ Yoneda extension of $M_3$ by $(-)$ (Lemma 13.27.4 in \cite[section 13.27]{stackexchange}).
 
As $\rho_3|_{G_{F_{v}}} \not \cong \rho_2|_{G_{F_{v}}}$ for all $v \mid p$, one has $\Hom_{k[G_{F_{v}}]}(\rho_3,\rho_2) = 0$. It follows from full faithfulness of $\mathbf{G}$ that $\Hom_{\mathcal{MF}_{\mO}}(M_3,M_2) = 0$. Lemma 4.4 in \cite{bloch2007functions} gives $\Ext^2_{\mathcal{MF}_{\mO}}(M_3,M_1) = 0$. Thus we have a short exact sequence 
\begin{equation}{\label{ext exact}}
    0 \to \Ext^1_{\mathcal{MF}_{\mO}}(M_3,M_1) \to \Ext^1_{\mathcal{MF}_{\mO}}(M_3,M) \to \Ext^1_{\mathcal{MF}_{\mO}}(M_3,M_2) \to 0.
\end{equation}

Following the argument in (\cite[p.~712]{diamond2004tamagawa}), for $i = 3$, $j \in \{1,2\}$, let $\Ext^1_{k[G_{F_{v}}],\FL}(\rho_i,\rho_j)$ be the image of $\Ext^1_{\mathcal{MF}_{\mO}}(M_i,M_j)$ in $\Ext^1_{k[G_{F_{v}}]}(\rho_i,\rho_j) $. Then we have the following chain of isomorphisms:
\begin{align*}
    \Ext^1_{\mathcal{MF}_{\mO}}(M_i,M_j) &\xrightarrow[\cong]{\phi} \Ext^1_{k[G_{F_{v}}],\FL}(\rho_i,\rho_j)  \cong \Ext^1_{k[G_{F_{v}}],\FL}(k,\Hom_k(\rho_i,\rho_j)) \\
    &\cong H^1_f(F_{v},\Hom_k(\rho_i,\rho_j)).
\end{align*}
The map $\phi$ is an isomorphism because $\mathbf{G}$ is fully faithful and exact. Furthermore the last isomorphism follows from the definition of $H^1_f(F_v,\cdot)$ for a $p$-adic $G_{F_{v}}$-module. The same argument also gives 
$$\Ext^1_{\mathcal{MF}_{\mO}}(M_3,M) \cong H^1_f(F_v, \Hom(\rho_3,\begin{pmatrix}
\rho_1 & a \\
0 & \rho_2
\end{pmatrix})).
$$

Thus (\ref{ext exact}) becomes
\begin{equation}{\label{induced}}
   0 \to H^1_f(F_v,\Hom_k(\rho_3,\rho_1)) \to H^1_f(F_v, \Hom(\rho_3,\begin{pmatrix}
\rho_1 & a \\
0 & \rho_2
\end{pmatrix})) \to H^1_f(F_v,\Hom_k(\rho_3,\rho_2)) \to 0. 
\end{equation}

By a similar argument as the proof of Lemma 5.3 in \cite{berger2013deformation}, we get that (\ref{induced}) is induced by the short exact sequence
\begin{equation*}\label{short exact}
    0 \xrightarrow[]{} \Hom(\rho_3, \rho_1) \xrightarrow[]{i} \Hom(\rho_3,\begin{pmatrix}
    \rho_1 & a \\
    0 & \rho_2
    \end{pmatrix}) \xrightarrow[]{j} \Hom(\rho_3, \rho_2) \to 0.
\end{equation*}
i.e.
$
H^1_f(F_v,\Hom_k(\rho_3,\rho_1)) = i_{*}^{-1} H^1_f(F_v, \Hom(\rho_3,\begin{pmatrix}
\rho_1 & a \\
0 & \rho_2
\end{pmatrix})), H^1_f(F_v,\Hom_k(\rho_3,\rho_2)) = j_{*} H^1_f(F_v, \Hom(\rho_3,\begin{pmatrix}
\rho_1 & a \\
0 & \rho_2
\end{pmatrix}))
$. Then by Lemma 2.3 in \cite{berger2020deformations} we obtain \eqref{Selmer exact}. 
\end{proof}

\begin{prop}{\label{Uniqueness of residual}}
If  $\overline{\sigma}_1 =  
\begin{pmatrix}
\rho_1 & a & b \\ 
0 & \rho_2 & c \\ 
0 & 0 & \rho_3
\end{pmatrix}$, and 
$\overline{\sigma}_2 =  
\begin{pmatrix}
\rho_1 & a' & b' \\ 
0 & \rho_2 & c' \\ 
0 & 0 & \rho_3
\end{pmatrix}$ are both Fontaine--Laffaille at $p$ and 
$
\begin{pmatrix}
    \rho_1 & * \\
    0 & \rho_2
\end{pmatrix}
$
and 
$
\begin{pmatrix}
    \rho_2 & * \\
    0 & \rho_3
\end{pmatrix}
$
are all non-split, then Assumptions \ref{main assumptions} (1)(2) imply
$\overline{\sigma}_1 \cong \overline{\sigma}_2$.
\end{prop}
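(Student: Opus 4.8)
The plan is to build an isomorphism $\overline{\sigma}_1 \cong \overline{\sigma}_2$ out of compatible isomorphisms on the two $2\times 2$ blocks, using the Selmer-group hypotheses of Assumption \ref{main assumptions}(1)(2) to show there is essentially no freedom in the extension data. First I would reduce to the case where the upper-left $2\times 2$ blocks of $\overline{\sigma}_1$ and $\overline{\sigma}_2$ literally coincide: both $\begin{pmatrix}\rho_1 & a\\ 0 & \rho_2\end{pmatrix}$ and $\begin{pmatrix}\rho_1 & a'\\ 0 & \rho_2\end{pmatrix}$ are non-split extensions of $\rho_2$ by $\rho_1$, and since $\dim H^1_\Sigma(F,\Hom(\rho_2,\rho_1)) = 1$, the class of $a'$ is a nonzero scalar multiple of that of $a$; rescaling (conjugating by a block-diagonal matrix) lets us assume $a = a'$. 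Similarly, using $\dim H^1_\Sigma(F,\Hom(\rho_3,\rho_2)) = 1$ I can arrange $c = c'$ after a further block-diagonal conjugation that fixes the already-normalized top block (one must check the two normalizations do not conflict, which they don't since the first uses the $\rho_1$–$\rho_2$ scaling and the second the $\rho_2$–$\rho_3$ scaling on independent diagonal blocks).

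Next, with $a = a'$ and $c = c'$, the difference $\overline{\sigma}_2 - \overline{\sigma}_1$ is concentrated in the upper-right block: $b' - b$ together with the interaction of $b,b'$ with $a,c$ assembles into a class in $H^1_\Sigma(F,\Hom(\rho_3, \rho_1))$, or more precisely a class that becomes trivial after the maps appearing in Lemma \ref{Selmer exact sequence}. Here is where I would invoke that lemma carefully. Conjugating $\overline{\sigma}_2$ by the unipotent matrix $\begin{pmatrix} I & 0 & x \\ 0 & I & 0 \\ 0 & 0 & I\end{pmatrix}$ changes $b'$ to $b' - x\rho_3 + \rho_1 x$ while leaving $a,c$ untouched; this is exactly modifying the putative extension class by a coboundary valued in $\Hom(\rho_3,\rho_1)$. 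The obstruction to making $\overline{\sigma}_1$ and $\overline{\sigma}_2$ agree by such a unipotent conjugation is therefore a class in $H^1_\Sigma(F,\Hom(\rho_3,\rho_1))$, which vanishes by Assumption \ref{main assumptions}(1). I would phrase this via the exact sequence of Lemma \ref{Selmer exact sequence}: the extensions $\begin{pmatrix}\rho_1 & a & b\\ 0 & \rho_2 & c\\ 0&0&\rho_3\end{pmatrix}$, viewed as extensions of $\rho_3$ by the common top block $\begin{pmatrix}\rho_1 & a\\ 0 & \rho_2\end{pmatrix}$, are classified by $H^1_\Sigma(F,\Hom(\rho_3,\begin{pmatrix}\rho_1 & a\\ 0 & \rho_2\end{pmatrix}))$; their images in $H^1_\Sigma(F,\Hom(\rho_3,\rho_2))$ are the classes of $c$ and $c'$, which agree after our normalization; hence their difference lies in the image of $H^1_\Sigma(F,\Hom(\rho_3,\rho_1)) = 0$, so the two classes are equal and the full representations are isomorphic.

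The step I expect to be the main obstacle is the bookkeeping that turns "the extension classes agree" into an actual matrix-valued isomorphism intertwining $\overline{\sigma}_1$ and $\overline{\sigma}_2$, while keeping track of which conjugations are still available at each stage. In particular one must ensure that the unipotent conjugation used to kill the $\Hom(\rho_3,\rho_1)$-obstruction does not disturb the earlier normalizations $a=a'$, $c=c'$ — which it does not, precisely because a strictly-upper-right unipotent acts trivially on the $a$- and $c$-entries — and one must be a little careful that "$\cong$" throughout means conjugation by an element of $\GL_n(k)$ of block-triangular shape, which is what the deformation-theoretic setup requires. A secondary point is to justify that equality of classes in $H^1_\Sigma$ (a Selmer subgroup of $H^1(G_\Sigma,-)$) suffices: since all the representations in sight are Fontaine-Laffaille at $p$, the relevant extension classes automatically lie in the Selmer subgroups, so working with $H^1_\Sigma$ rather than full $H^1$ is legitimate and is in fact what makes Assumptions \ref{main assumptions}(1)(2) applicable. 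Once these compatibilities are checked, the chain of conjugations assembles into the desired isomorphism $\overline{\sigma}_1 \cong \overline{\sigma}_2$.
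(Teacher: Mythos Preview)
Your proposal is correct and follows essentially the same approach as the paper's proof: normalize $a=a'$ and $c=c'$ via block-diagonal conjugation using the one-dimensionality of the two Selmer groups in Assumption \ref{main assumptions}(2), then invoke Lemma \ref{Selmer exact sequence} together with the vanishing in Assumption \ref{main assumptions}(1) to conclude that the two classes in $H^{1}_{\Sigma}(F,\Hom(\rho_3,\begin{pmatrix}\rho_1 & a\\ 0 & \rho_2\end{pmatrix}))$ coincide. The paper phrases the final step as ``both are nonzero in a space injecting into a one-dimensional space,'' while you phrase it as ``their difference lies in the image of a zero group''; these are equivalent, and your explicit unipotent-conjugation description of why the last step does not disturb the earlier normalizations is a helpful elaboration the paper leaves implicit.
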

\begin{proof}
Both 
$
\begin{pmatrix}
    \rho_1 & a \\
    0 & \rho_2
\end{pmatrix}
$
and 
$
\begin{pmatrix}
    \rho_1 & a' \\
    0 & \rho_2
\end{pmatrix}
$
give rise to non-trivial elements in $H^{1}_{\Sigma}(F, \Hom(\rho_2,\rho_1))$. By Assumption \ref{main assumptions} (2) that $\dim_{k} H^{1}_{\Sigma}(F, \Hom(\rho_2,\rho_1)) = 1$, we can assume that $a = \alpha a'$, where $\alpha \in k^{\times}$. Similarly, we can assume that $c = \beta c'$ by $\dim H^{1}_{\Sigma}(F, \Hom(\rho_3,\rho_2)) = 1$, for some $\beta \in k^{\times}$. 

As
$$
\begin{pmatrix}
    I_1 & 0 & 0\\
    0 & \alpha I_2 & 0 \\
    0 & 0 & \alpha \beta I_3 \\
\end{pmatrix}
\overline{\sigma}_2
\begin{pmatrix}
    I_1 & 0 & 0\\
    0 & \alpha I_2 & 0 \\
    0 & 0 & \alpha \beta I_3 \\
\end{pmatrix}^{-1}
=
\begin{pmatrix}
    \rho_1 & a & \alpha \beta^{-1} b\\
    0 & \rho_2 & c \\
    0 & 0 & \rho_3 \\
\end{pmatrix},
$$
we can in fact assume that $a' = a$ and $c' = c$.

Under Assumption \ref{main assumptions} (1) that $H^{1}_{\Sigma}(F, \Hom(\rho_3, \rho_1)) = 0$, by Lemma \ref{Selmer exact sequence}, we get
$$
H^{1}_{\Sigma}(F,\Hom(\rho_3,\begin{pmatrix}
\rho_1 & a \\
0 & \rho_2
\end{pmatrix})) \hookrightarrow H^{1}_{\Sigma}(F, \Hom(\rho_3, \rho_2))
$$
is an injection, where the latter Selmer group is assumed to be 1-dimensional. Then since both $\overline{\sigma}_1, \overline{\sigma}_2$ are non-zero elements in $H^{1}_{\Sigma}(F,\Hom(\rho_3,\begin{pmatrix}
\rho_1 & a \\
0 & \rho_2
\end{pmatrix}))$, we have $\overline{\sigma}_1 \cong \overline{\sigma}_2$.
\end{proof}

\subsection{Deformation Problem}{\label{deformation problem}}


\begin{lem}\label{scalar centralizer}
$\overline{\sigma}$ has a scalar centralizer.
\end{lem}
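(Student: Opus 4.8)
The plan is to show that any matrix $P \in \GL_n(k)$ commuting with every $\overline{\sigma}(g)$ must be a scalar. First I would observe that since $\overline{\sigma}$ is block upper-triangular with diagonal blocks $\rho_1, \rho_2, \rho_3$, a centralizing element $P$ can be analyzed block by block. Writing $P = (P_{ij})_{1 \le i,j \le 3}$ in the same block decomposition (so $P_{ij}$ is an $n_i \times n_j$ matrix), I would first argue that $P$ is itself block upper-triangular, i.e. $P_{21} = P_{31} = P_{32} = 0$. This follows because the lower-left blocks, when one conjugates by the diagonal part of $\overline{\sigma}$, would produce intertwining maps between non-isomorphic irreducible representations in the ``wrong direction'': for instance $P_{21}$ would have to satisfy $\rho_2(g) P_{21} = P_{21} \rho_1(g)$ for all $g$ (after accounting for the off-diagonal terms, which only contribute in the upper-triangular part), forcing $P_{21} = 0$ by Schur's lemma since $\rho_1 \not\cong \rho_2$ as $\G$-representations. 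One has to be slightly careful with the order in which the blocks are killed — kill $P_{31}$, then $P_{21}$ and $P_{32}$ — because the off-diagonal entries $a, b, c$ enter the commutation relations.

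Next I would show the diagonal blocks are scalars and all equal. Once $P$ is block upper-triangular, the $(i,i)$ block of the relation $P \overline{\sigma}(g) = \overline{\sigma}(g) P$ gives $P_{ii} \rho_i(g) = \rho_i(g) P_{ii}$, so each $P_{ii}$ is a scalar $\lambda_i I_{n_i}$ by Schur's lemma (using that $\rho_i$ is absolutely irreducible and $k$ need not be algebraically closed — but absolute irreducibility is exactly what guarantees the endomorphism algebra is $k$ itself). Then I would use the non-split extension hypotheses: the $(1,2)$ block of the commutation relation reads $\lambda_1 a(g) + P_{12}\rho_2(g) = \rho_1(g) P_{12} + a(g)\lambda_2$ (schematically), and the fact that $\begin{pmatrix} \rho_1 & a \\ 0 & \rho_2 \end{pmatrix}$ is a non-split extension forces $\lambda_1 = \lambda_2$; a symmetric argument with the $(2,3)$ block and the non-split extension $\begin{pmatrix} \rho_2 & c \\ 0 & \rho_3 \end{pmatrix}$ forces $\lambda_2 = \lambda_3$. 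Hence all $\lambda_i$ are equal to a common scalar $\lambda$, and subtracting $\lambda I_n$ from $P$ reduces us to the case where $P$ has zero diagonal.

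Finally I would rule out a nonzero strictly-upper-triangular centralizer. With $P$ strictly block upper-triangular and commuting with $\overline{\sigma}$, the $(1,2)$ and $(2,3)$ blocks now say $P_{12}$ and $P_{23}$ are $\G$-equivariant maps $\rho_2 \to \rho_1$ and $\rho_3 \to \rho_2$ respectively, hence both zero by Schur (again $\rho_i \not\cong \rho_j$). That leaves only $P_{13}$, whose block relation says $P_{13}$ is a $\G$-map $\rho_3 \to \rho_1$, zero for the same reason. So $P = 0$ in the reduced situation, i.e. the original $P$ was the scalar $\lambda I_n$.

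The main obstacle — really the only subtlety — is bookkeeping the off-diagonal terms $a, b, c$ carefully when extracting the block equations, and making sure the non-split hypotheses are invoked correctly to force $\lambda_1 = \lambda_2 = \lambda_3$; a clean way to phrase this is to note that a block-diagonal conjugation by $\mathrm{diag}(\lambda_1 I_{n_1}, \lambda_2 I_{n_2}, \lambda_3 I_{n_3})$-type relations would split the extension $\begin{pmatrix} \rho_1 & a \\ 0 & \rho_2\end{pmatrix}$ unless $\lambda_1 = \lambda_2$. Everything else is a routine application of Schur's lemma for absolutely irreducible representations together with the pairwise non-isomorphism $\rho_i \not\cong \rho_j$, which in fact we only need over $\G$ (the stronger local hypothesis $\rho_i|_{G_{F_v}} \not\cong \rho_j|_{G_{F_v}}$ is more than enough).
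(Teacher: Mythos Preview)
Your proof is correct and follows essentially the same route as the paper: block-decompose a centralizing matrix, kill the strictly lower-triangular blocks via Schur's lemma using $\rho_i \not\cong \rho_j$, force the diagonal scalars to coincide via the non-split hypotheses, and then kill the remaining upper-triangular blocks. The one place where the presentations diverge slightly is in extracting $\lambda_1 = \lambda_2$: the paper argues by producing a specific $g$ with $\rho_1(g) = \rho_2(g) = 1$ but $a(g) \ne 0$, whereas you observe directly that if $\lambda_1 \ne \lambda_2$ then $(\lambda_1 - \lambda_2)^{-1} P_{12}$ would split the extension $\begin{pmatrix} \rho_1 & a \\ 0 & \rho_2 \end{pmatrix}$ --- your formulation is the cleaner and more robust of the two.
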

\begin{proof}
Suppose
$\begin{pmatrix}
    A & B & C\\
    D & E & F\\
    G & H & I
\end{pmatrix}
\in \GL_n(k)
$
is an element of the centralizer of $\overline{\sigma}$, where $A, ..., I$ are matrices of appropriate sizes, then 
$$
\begin{pmatrix}
    A & B & C\\
    D & E & F\\
    G & H & I
\end{pmatrix}
\begin{pmatrix}
        \rho_1 & a & c \\
        0 & \rho_2 & b \\
        0 & 0 & \rho_3
\end{pmatrix}
= 
\begin{pmatrix}
        \rho_1 & a & c \\
        0 & \rho_2 & b \\
        0 & 0 & \rho_3
\end{pmatrix}
\begin{pmatrix}
    A & B & C\\
    D & E & F\\
    G & H & I
\end{pmatrix},
$$
gives 
\begin{equation}{\label{eq1}}
\begin{pmatrix}
    A \rho_1 & A a + B \rho_2 & *\\
    D \rho_1 & D a + E \rho_2 & *\\
    G \rho_1 & G a + H \rho_2 & *
\end{pmatrix}
= 
\begin{pmatrix}
    \rho_1 A + aD + cG & \rho_1 B + a E + c H & *\\
    \rho_2 D + b G & \rho_2 E + b H & * \\
    \rho_3 G & \rho_3 H & *
\end{pmatrix}.
\end{equation}
Position (3,1) gives $G \rho_1 = \rho_3 G$. We must have $G = 0$ since $\rho_1 \not \cong \rho_3$. 

For similar reasons, we get $D = 0$ and $H = 0$. Now \eqref{eq1} becomes 
\begin{equation}\label{eq 1}
\begin{pmatrix}
    A \rho_1 & A a + B \rho_2 & A c + B b + C \rho_3 \\
    0 & E \rho_2 & E b + F \rho_3\\
    0 & 0 & I \rho_3
\end{pmatrix}
= 
\begin{pmatrix}
    \rho_1 A & \rho_1 B + a E & \rho_1 C + a F + c I\\
    0 & \rho_2 E & \rho_2 F + b I \\
    0 & 0 & \rho_3 I
\end{pmatrix} 
\end{equation}

Note that because $\rho_i$'s are absolutely irreducible for all $i$, their centralizers are scalar matrices by Schur's lemma. Thus, the diagonal entries show that $A,E,I$ are scalar matrices.

Since $A, E$ are scalar matrices, $A = \alpha_{A} I_{n_{1}}, E = \alpha_{E} I_{n_{2}}$ for some $\alpha_{A}, \alpha_{E} \in k^{\times}$ and correct size of identity matrices $I_{n_{1}}, I_{n_{2}}$. 

We claim $\alpha_{A} = \alpha_{E}$. Suppose $\alpha_{A} \ne \alpha_{E}$, then
$$
a = (A-E)^{-1} (\rho_1 B - B \rho_2).
$$
As $\begin{pmatrix}
    \rho_1 & a \\
    0 & \rho_2 \\
\end{pmatrix}$ is a non-split extension, $a \rho_2^{-1} \in H^1(\G, \Hom(\rho_2, \rho_1))$ should not a coboundary. However, if $g \in \G$, the action of $g$ on $a \in \Hom(\rho_2, \rho_1)$, $g \dot a (x) = g \cdot a(g^{-1} x) = \rho_1(g) a(x) \rho_2^{-1}(g)$ and 
$$
a \rho_2^{-1} (g) = (A-E)^{-1} (\rho_1 (g) B \rho_2^{-1}(g) - B) = g \cdot \varphi - \varphi,
$$
would a coboundary, for $\varphi = (A-E)^{-1} B \in \Hom(\rho_2, \rho_1)$. Thus $\alpha_{A} = \alpha_{E} = \alpha \in k^{\times}$.

Then the entries corresponding to the position (1,2) reduce to $B \rho_2 = \rho_1 B$. As $\rho_2 \not \cong \rho_3$, we get $B = 0$. Similarly, $I = \alpha I_{n_{3}}$ and $F = 0$. Then position (1,3) now gives $C \rho_3 = \rho_1 C$ and $C = 0$. Hence $\overline{\sigma}$ has a scalar centralizer. 
\end{proof}

Let $\mathcal{R}$ be an object in $\hat{\mathcal{C}}_{W(k)}$. Let $M$ be a rank $n$ free $\R$-module, $\mathcal{D}(\R)$ denote the set of deformations $\sigma: G_{\Sigma} \to \GL_{\R}(M)$ of $\overline{\sigma}$ with the following deformation condition:
\begin{enumerate}
    \item $\sigma$ is Fontaine-Laffaile at $p$.
\end{enumerate}

\begin{prop}
The functor $\mathcal{D}$ is representable by a universal deformation ring $R_{\FL}$ with corresponding universal deformation $\sigma_{\FL}$.
\end{prop}
\begin{proof}
Being Fontaine--Laffaille at $p$ is a deformation condition and the endomorphism ring of the Galois module associated to $\overline{\sigma}$ is $k$ by Lemma \ref{scalar centralizer}. It follows from \cite{ramakrishna1993variation} that the problem is representable.
\end{proof}

We proceed to consider an extra condition. Let $\mathcal{D}'(\R)$ denote the set of deformations $\sigma: G_{\Sigma} \to \GL_{n}(\mathcal{R})$ of $\overline{\sigma}$ with conditions:
\begin{enumerate}
    \item $\sigma$ is Fontaine-Laffaile at $p$.
    \item $\sigma$ is $\tau$-self-dual for some anti-involution $\tau$.
\end{enumerate}

\begin{prop}
The functor $\mathcal{D}'$ is representable by a universal deformation ring $R'$.
\end{prop}
\begin{proof}
$R'$ is the quotient of $R_{\FL}$ by the ideal generated by $\{\tr(\sigma_{\FL}(g)) - \tr(\sigma_{\FL} \circ \tau)(g)) | g \in G_{\Sigma}\}$.
\end{proof}

Furthermore, we are interested in the reduced universal deformation ring $R$. 
\begin{defn}
    $R$ is the quotient of $R'$ by its nilradical and $\sigma$ is its corresponding universal deformation.
\end{defn}


\section{Principality of The Reducibility Ideal}
The goal of this section is to show that, under Assumptions (\ref{main assumptions})(1)(2)(3), the reducibility ideal of \( R \) is principal by applying methods from the theory of pseudorepresentations. 

The section begins by discussing previous results in deformation theory and introducing pseudorepresentations, the reducibility ideal, pseudodeformations, and GMAs. Since we use the theory of pseudodeformations rather than directly studying them, we then identify the GMA structure of \( R[\G]/\ker \sigma \) in our setting, and use it to determine the structure of the reducibility ideal of \( R \). Next, we provide a new result that relates the GMA structure of a universal pseudodeformation to Selmer groups, along with a parallel result for \( \sigma \). Then, we use bounds on the Selmer groups and the anti-involution \( \tau \) to establish that the reducibility ideal is principal. Finally, we state sufficient conditions such that $\ker \sigma = \ker (\det \circ \sigma)$.

If the residual representation in interest has two Jordan-H\"{o}lder factors $\rho_1, \rho_2$, one can show that the reducibility ideal is principal if 
\begin{equation*}\tag{$\star$}
    \dim_k H^1_{\Sigma}(F,\Hom(\rho_j, \rho_i)) = 1, 
\end{equation*}
for $i,j \in \{1,2\}, i \ne j$. However, in the context of three Jordan-H\"{o}lder factors, the structure of the total reducibility ideal  $I^{tot}$ is more complicated. These assumptions in Assumptions \ref{main assumptions} (1)-(3) and self-duality of $\sigma$ are key to simplifying the expression of $I^{\tot}$ and achieving principality. They can be thought of as analogous to $(\star)$ in the case of three Jordan-H\"{o}lder factors.

Bella\"{i}che and Chenevier \cite{bellaiche2009families} first studied the theory of pseudocharacters and introduced generalized matrix algebras (GMAs). Later, Chenevier \cite{chenevier2014p} generalized pseudocharacters to pseudorepresentations (\textit{determinants} in his terminology), extending the base ring to an arbitrary ring. Furthermore, Wake and Wang-Erickson \cite{wake2019deformation} studied pseudodeformation conditions for pseudorepresentations. Their work regarding representability of the pseudodeformation functor with conditions extends the result of \cite{ramakrishna1993variation} from representations to pseudorepresentations. 
We will largely follow notation and definitions in \cite{wake2019deformation} and \cite{chenevier2014p}.

\subsection{Previous results in deformation theory}
In this section, we review results from \cite{chenevier2014p} \cite{wake2019deformation} regarding pseudorepresentations, Cayley-Hamilton, reducibility ideal, deformation conditions for pseudorepresentations, and GMAs.

\subsubsection{\textsf{Pseudorepresentations}}
Below we review the definition of pseudorepresentations and compare it to pseudocharacters. Let $\R \in \text{ob}(\hat{\mathcal{C}}_{W(k)})$.

\begin{defn}
Let $E$ be an $\R$-algebra.
\begin{enumerate}
    \item There is a functor $\underline{E}: A \to  E \otimes_{\R} A$ from the category of $\R$-algebras to sets for each commutative $\R$-algebra $A$. A \textbf{pseudorepresentation} $D: E \to \R$, or $(E,D)$ is a natural transformation from $\underline{E}$ to $\underline{\R}$ whose data consists of a function $D_{A}: E \otimes_{\R} A \to A$ for each $A$ that satisfies the followings:
    \begin{enumerate}
        \item $D$ is multiplicative, $D_{A}(1) = 1, D_{A}(xy) = D_{A}(x) D_{A}(y)$ for all $x,y \in E \otimes_{\R} A$;
        \item $D$ has degree $d \ge 1$, i.e., $D_{A}(bx) = b^{d} D_{A}(x)$ for all $x \in E \otimes_{\R} A$ for all $b \in A.$
    \end{enumerate}
    \item When $E = \R[G_{\Sigma}]$, we say that $D: \R[G_{\Sigma}] \to \R$, is a \textbf{pseudorepresentation of} $G_{\Sigma}$. 
    $d$ is the \textbf{dimension} of $D$, and $\R$ the \textbf{scalar ring} of $(E,D)$. 
    
    \item If $D: E \to \R$ is a pseudorepresentation of degree $d$, and $x \in E$, the \textbf{characteristic polynomial} $\chi_{D}(x,t) \in \R[t]$ is defined as $\chi_{D}(x,t) = D_{\R[t]}(t-x)$. It is monic and its degree is $d$. \textbf{Trace} $T_{D}(x)$ is the additive inverse of the coefficient of $t^{d-1}$ in $\chi_{D}(x,t)$. 

    \item Let $D \otimes_{\R} k: E \otimes_{\R}  k \to k$ be the pseudopresentation of $E \otimes_{\R} k$ defined by $(D \otimes_{\R} k)_A(x) = D_{A}(x)$ for each $k$-algebra $A$. 

\end{enumerate}
\end{defn}

\begin{rmk}
   The map $T_{D}: E \to \R$ is a pseudocharacter in the sense of \cite{bellaiche2009families} (cf. \cite[Lemma. 1.12]{chenevier2014p}) and the definition of a pseudorepresentation above is the same as the definition of a \textit{determinant} in \cite{chenevier2014p}. Pseudorepresentation theory extends the theory of pseudocharacters as it works for an arbitrary ring, i.e., contrary to the case of pseudocharacters, there is no need for the assumption that $d! \in \R^{\times}$. There is a bijection between the sets of pseudocharacters and the set of pseudorepresentations if $(2d)! \in \R^{\times}$ (cf. \cite[Prop. 1.29]{chenevier2014p}). \cite{ophir2024relation} recently has improved the result to require only $d! \in \R^{\times}$.
\end{rmk}



\begin{defn}
A pseudorepresentation $D: E \to \R$ is \textbf{Cayley-Hamilton} if $E$ is finitely generated as an $\R$-algebra and for every commutative $\R$-algebra $A$ and $x \in E \otimes_{\R} A$, one has $\chi_{D}(x,x) = 0.$ When $D: E \to \R$ is Cayley-Hamilton, we call $(E,D)$ a \textbf{Cayley-Hamilton $\R$-algebra}. Define a \textbf{Cayley-Hamilton representation} of $G_{\Sigma}$ over $\R$ as a triple $(E,\rho,D)$, where $(E,D)$ is a Cayley-Hamilton $\R$-algebra and $\rho: G_{\Sigma} \to E^{\times}$ is a group homomorphism. A \textbf{morphism of Cayley-Hamilton representations} of $\G$, $(E, \rho, D) \to
(E', \rho',D')$, is a map of Cayley-Hamilton algebras $(E,D) \to (E',D')$ such that $\rho' = (E \to E') \circ \rho$.
\end{defn}

\subsubsection{\textsf{Reducibility ideal}}
For simplicity, from now, we demonstrate the results by restricting to have $3$ Jordan-H\"older factors as in our setting, i.e., we consider a pseudorepresentation $\D = \det \circ \overline{\sigma} = \det (\rho_1 \oplus\rho_2 \oplus \rho_3): k[G_{\Sigma}] \to k$. We note that the results below apply similarly if $\overline{\sigma}$ has $n$ Jordan-H\"older factors.

Following \cite[Section 1.5]{bellaiche2009families} and \cite{chenevier2014p}, we have the following definition of reducibility ideal and reducible pseudorepresentation.

\begin{defn}{\label{red def}} 
Recall that $\R$ is an object in $\hat{\mathcal{C}}_{W(k)}$. Let $\cP = (\cP_1, ..., \cP_s)$ be a partition of $\{1,2,3\}$ and $D': \R[G_{\Sigma}] \to \R$ such that $D' \otimes_{\R} k \cong \D$. The \textbf{reducibility ideal} $I_{\cP,\R}$ of $\R$ is the ideal of $\R$ such that for every ideal $J$ of $\R$, $I_{\cP,\R} \subseteq J$ if and only if $D'$ is $\cP$-\textbf{reducible}, i.e., there exists pseudorepresentations $D^{'}_1, ..., D^{'}_s: \R[G_{\Sigma}] \otimes_{\R} \R/J \to \R/J$ such that for each $\R/J$-algebra $A$ and $x \in E \otimes_{\R/J} A$, 
\begin{enumerate}
    \item $T_{D'} \otimes \R/J = \sum_{i=1}^{s} T_{D'_{i}}$,
    \item $T_{D^{'}_{i}} \otimes k =\tr (\bigoplus_{j \in \cP_{i}} \rho_j)$ for $i \in \{1,...,s\}$.
\end{enumerate}
\end{defn}

\subsubsection{\textsf{Deformation conditions for pseudorepresentations}}
\vspace*{0.1cm}

In \cite{wake2019deformation}, the authors generalized Ramakrishna's result \cite{ramakrishna1993variation} and showed the representability of the pseudodeformation functor with pseudodeformation conditions. Particularly, in this paper, we are interested in the pseudodeformation condition being Fontaine--Laffaille at $p$ and demonstrate their result in this case.

First, we give the definition of a pseudorepsentation that is Fontaine--Laffaille at $p$.

\begin{defn}
Let $(E,\rho,D': \R[G_{\Sigma}] \to \R)$ over $\R$ be Cayley-Hamilton representation where $E$ is a finitely generated $\R$-module. We say that $E$ is\textbf{ Fontaine--Laffaille at }$p$ as a $\R[G_{\Sigma}]$-module if $E|_{\R[G_{F_{v}}]}$ is in the essential image of $\mathbf{G}$. Furthermore, one says $D'$ is \textbf{Fontaine--Laffaille at} $p$ if there is a Cayley-Hamilton representation $(E,\rho,D'': \R[G_{\Sigma}] \to \R)$ such that $E$ is Fontaine--Laffaille at $p$ as a $\R[G_{\Sigma}]$-module and $D' = D'' \circ \rho$.
\end{defn}

Let PsDef$^{\FL}_{\Bar{D}}: \hat{\mathcal{C}}_{W(k)} \to \text{Sets}$ be the Fontaine--Laffaille at $p$ pseudodeformation functor of $\D$, i.e.,
\begin{align*}
\text{PsDef}_{\Bar{D}}^{\FL}: \Big\{ A \in \hat{\mathcal{C}}_{W(k)} \Big\}
\to 
\Big\{&D': A[G_{\Sigma}] \to A \text{ s.t. } D' \otimes_{A} k \cong \D
\\
&\text{such that $D'$ is Fontaine--Laffaille at $p$} \Big\} 
\end{align*}
Since the Fontaine--Laffaille at $p$ condition is a stable condition in the sense of Definition 2.3.1 in \cite{wake2019deformation}, Theorem 2.5.5 in there shows that PsDef$^{\FL}_{\Bar{D}}$ is a representable functor. 

Let $(E_{\D}^{\FL},\rho_{\D}^{\FL},D_{\D}^{\FL}: R_{\D}^{\FL}[G_{\Sigma}] \to R_{\D}^{\FL})$ be the universal pseudodeformation. Then $E_{\D}^{\FL}$ has data $\E^{\FL}_{\D} = \{e^{\FL}_{\D,i}, \psi^{\FL}_{\D,i}\}$ GMA structure
$$
E_{\D}^{\FL} \cong 
\begin{pmatrix}
    M_{1,1}(R_{\D}^{\FL}) & M_{1,2}(\A_{1,2}^{ \FL}) & M_{1,3}(\A_{1,3}^{ \FL})\\
    M_{2,1}(\A_{2,1}^{ \FL}) & M_{2,2}(R_{\D}^{\FL}) & M_{2,3}(\A_{2,3}^{ \FL})\\
    M_{3,1}(\A_{3,1}^{ \FL}) & M_{3,2}(\A_{3,2}^{ \FL}) & M_{3,3}(R_{\D}^{\FL})
\end{pmatrix}.
$$
Many details concerning GMAs can be found in \cite[~Section 1.3]{bellaiche2009families}.
\vspace*{0.1cm}

Lastly, we introduce the definition and properties of Cayley-Hamilton $G_{\Sigma}$-modules. A \textbf{Cayley-Hamilton $G_{\Sigma}$-module} is an $E$-module $N$ for some Cayley-Hamilton representation $(E,\rho,D')$ of $G_{\Sigma}$. $(E,D')$ is the Cayley-Hamilton algebra of $N$. We say $N$ is faithful if it is faithful as a $E$-module. The next Theorem shows that a faithful $G_{\Sigma}$-module is Fontaine--Laffaille at $p$ if its Cayley-Hamilton algebra is Fontaine--Laffaille at $p$.

\begin{thm}{\label{N has C}}
Let $N$ be a faithful Cayley-Hamilton $G_{\Sigma}$-module with Cayley-Hamilton $\R$-algebra $(E,D')$. Then the following are equivalent:
\begin{enumerate}
    \item $N$ is Fontaine--Laffaille at $p$ as an $\R[G_{\Sigma}]$-module,
    \item $(E,\rho,D')$ is Fontaine--Laffaille at $p$ as a Cayley-Hamilton representation.
\end{enumerate}
\end{thm}
\begin{proof}
This is Theorem 2.6.4 in \cite{wake2019deformation}.
\end{proof}

\subsection{New results in deformation theory}
In this subsection, we return to our setting and study the GMA structures of \( R[G_{\Sigma}]/\ker \sigma \) and \( R[G_{\Sigma}]/\ker D \), where $D = \det \circ \sigma$, and use it to identify the structure of the total reducibility ideal of \( R \). We note that the results in \cite{wake2019deformation} cannot be directly applied, as even though we rely on the theory of pseudodeformations, \( \sigma \) is not a pseudodeformation, and new results must be provided. We then prove Proposition \ref{Ext with C pseudo} that links \( \A_{i,j}^{\FL} \) in \( E_{\D}^{\FL} \) to Selmer groups, along with a parallel result Proposition \ref{Ext with C} for \( R[G_{\Sigma}]/\ker \sigma \). Then, we show that the principality of the Selmer groups and the anti-involution \( \tau \) guarantees that the reducibility ideal is principal. Finally, we identify sufficient conditions when $\ker \sigma = \ker D$.

\subsubsection{\textsf{Generalized Matrix Algebras (GMAs)}}
As before, let $R$ be the universal deformation ring in our setting and $\sigma$ its corresponding universal deformation. Let $D = \det \circ \sigma: R[G_{\Sigma}] \to R$ be the associated universal pseudorepresentation.

We define $\ker D$ following \cite[\S 1.17]{chenevier2014p}:
$$
 \ker(D) = \{x \in E |\text{for all $R$-algebras A, $\forall a \in A$, $\forall m \in E \otimes A$, } D(x \otimes a + m) = D(m) \}.
$$
Following \cite[\S 1.17]{chenevier2014p}, let CH($D$) $\subset R[G_{\Sigma}]$ be the ideal of $R[G_{\Sigma}]$ generated by the coefficients of $\chi_D(x,t) \in R[t]$ with $x \in R[G_{\Sigma}]$. In particular, if CH($D) = 0$, then $D$ is Cayley-Hamilton. 

Let $\star \in \{\sigma, D\}$ and $E^{\star} = R[G_{\Sigma}]/\ker \star$. As $\ker \sigma \subset \ker D$, we have a canonical $R$-algebra surjection $\phi: E^{\sigma} \twoheadrightarrow E^{D}$. The pair $(E^{D},D)$ is a Cayley-Hamilton $R$-algebra as $ \text{CH}(D) \subset \ker D$ (cf. \cite[Lemma 1.21]{chenevier2014p}). By the Cayley-Hamilton theorem, every square matrix over a commutative ring satisfies its own characteristic equation, and $(E^{\sigma},D)$ is also a Cayley-Hamilton $R$-algebra.

\begin{lem}{\label{GMA}}
\begin{enumerate}
    \item For $\star \in \{\sigma, D\}$, there are data $\E^{\star} = \{e_i^{\star}, \psi_i^{\star}\}$ such that $E^{\star}$ is GMA in the sense of \cite[\S 1.3]{bellaiche2009families}. More precisely, for a fixed $\star$, $\{e_i^{\star}\}_{i=1}^{3}$ is a set of orthogonal idempotents and $\psi_i^{\star}: e_i^{\star} E^{\star} e_i^{\star} \to M_{i,i}(R)$ is an $R$-algebra isomorphism with $\psi_i^{\star} \otimes k = \rho_i$. $e_i^{\star} E^{\star} e_j^{\star} = M_{i,j}(A_{i,j}^{\star})$. $A_{i,j}^{\star}$ satisfy that 
$$
A_{i,j}^{\star} A_{j,k}^{\star} \subset A_{i,k}^{\star}, \quad A_{i,i}^{\star} \cong R, \quad  A_{i,j}^{\star} A_{j,i}^{\star} \subset \mathfrak{m},
$$
where $\mathfrak{m}$ is the maximal ideal of $R$. Furthermore, 
\begin{equation}{\label{S^T matrix}}
E^{\star} \cong
\begin{pmatrix}
    M_{1,1}(R) & M_{1,2}(A_{1,2}^{\star}) & M_{1,3}(A_{1,3}^{\star})\\
    M_{2,1}(A_{2,1}^{\star}) & M_{2,2}(R) & M_{2,3}(A_{2,3}^{\star})\\
    M_{3,1}(A_{3,1}^{\star}) & M_{3,2}(A_{3,2}^{\star}) & M_{3,3}(R)
\end{pmatrix}.
\end{equation}
    \item $\phi(e_i^{\sigma}) = e_i^{D}$ and $\phi(A_{i,j}^{\sigma}) = A_{i,j}^{D}$.
    \item  $A_{i,j}^{D}$ are isomorphic to fractional ideals of $R$. $A_{i,j}^{\sigma}$ are submodules of $R$.
\end{enumerate}
\end{lem}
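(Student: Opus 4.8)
The plan is to construct the GMA structure on $E^{\sigma}$ first and then transport everything to $E^{D}$ via the surjection $\phi$. For part (1), the starting point is the set of idempotents lifting the block-diagonal idempotents of the residual representation. Since $(E^{\sigma}, D)$ is a Cayley-Hamilton $R$-algebra with residual pseudorepresentation $\bar{D} = D \otimes_R k$ whose associated semisimple representation is $\rho_1 \oplus \rho_2 \oplus \rho_3$ with the $\rho_i$ pairwise non-isomorphic and absolutely irreducible, one can invoke the structure theory of \cite[\S 1.4]{bellaiche2009families} (or \cite[\S 2]{wake2019deformation}) which produces, for any Cayley-Hamilton algebra whose residual representation is multiplicity-free semisimple, a complete set of orthogonal idempotents $\{e_i^{\sigma}\}_{i=1}^3$ summing to $1$, together with $R$-algebra isomorphisms $\psi_i^{\sigma}: e_i^{\sigma} E^{\sigma} e_i^{\sigma} \xrightarrow{\sim} M_{n_i}(R)$ reducing mod $\m$ to $\rho_i$; the key input is that idempotents lift along nilpotent (more precisely, the relevant $\m$-adically complete) extensions and that $\operatorname{End}(\rho_i) = k$ by Schur and absolute irreducibility. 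Setting $A_{i,j}^{\sigma}$ via $e_i^{\sigma} E^{\sigma} e_j^{\sigma} \cong M_{n_i \times n_j}(A_{i,j}^{\sigma})$ is then bookkeeping, and the three displayed containments $A_{i,j}^{\sigma} A_{j,k}^{\sigma} \subset A_{i,k}^{\sigma}$, $A_{i,i}^{\sigma} \cong R$, $A_{i,j}^{\sigma} A_{j,i}^{\sigma} \subset \m$ follow respectively from associativity of multiplication in $E^{\sigma}$, from the isomorphism $\psi_i^{\sigma}$, and from the fact that the composite $e_i^{\sigma} E^{\sigma} e_j^{\sigma} \cdot e_j^{\sigma} E^{\sigma} e_i^{\sigma} \to e_i^{\sigma} E^{\sigma} e_i^{\sigma} \cong M_{n_i}(R)$ must land in the non-units because its reduction mod $\m$ factors through $\operatorname{Hom}(\rho_j,\rho_i) \circ \operatorname{Hom}(\rho_i,\rho_j) \to \operatorname{End}(\rho_i)$ which is zero as $\rho_i \not\cong \rho_j$. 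The matrix presentation \eqref{S^T matrix} is just reassembling these blocks. The same argument verbatim applies to $E^{D}$ since $(E^{D}, D)$ is also Cayley-Hamilton with the same residual data.

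For part (2), I would argue that $\phi$ is compatible with the construction: since $\phi: E^{\sigma} \twoheadrightarrow E^{D}$ is a surjection of Cayley-Hamilton $R$-algebras respecting the pseudorepresentation $D$ and inducing the identity on residual representations, the images $\phi(e_i^{\sigma})$ form a complete set of orthogonal idempotents of $E^{D}$ lifting the same residual idempotents; by the uniqueness of such lifts up to conjugation (and after adjusting the $\psi_i^D$ accordingly — which is harmless since the GMA data is only well-defined up to this conjugation), we may take $e_i^{D} = \phi(e_i^{\sigma})$. Then $\phi(e_i^{\sigma} E^{\sigma} e_j^{\sigma}) = e_i^{D} \phi(E^{\sigma}) e_j^{D} = e_i^{D} E^{D} e_j^{D}$ by surjectivity, which after identifying both sides with matrix algebras gives $\phi(A_{i,j}^{\sigma}) = A_{i,j}^{D}$ (with $\phi$ inducing the identity on the diagonal $A_{i,i}^{\sigma} = R = A_{i,i}^{D}$).

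For part (3), that $A_{i,j}^{\sigma}$ are genuine ideals of $R = A_{i,i}^{\sigma}$ is immediate from $A_{i,i}^{\sigma} A_{i,j}^{\sigma} \subset A_{i,j}^{\sigma}$ together with the fact that $A_{i,j}^{\sigma}$ sits inside $E^{\sigma}$, an honest $R$-algebra: the multiplication maps exhibit $A_{i,j}^{\sigma}$ as an $R$-submodule of $R$. For $A_{i,j}^{D}$ being fractional ideals, the point is that $E^{D}$ embeds into $E^{\sigma} \otimes_R \operatorname{Frac}(R)$-type total-ring-of-fractions picture — more precisely, since $R$ is reduced (we passed to $R = R'/\mathrm{nil}$) and the pseudorepresentation $D$ becomes an actual direct sum of irreducibles after inverting a suitable non-zero-divisor, the $A_{i,j}^{D}$ are $R$-submodules of $R$ that become free of rank one after localizing at the generic points, hence are fractional ideals; this can be cited from \cite[Lemma 1.25 and surrounding discussion]{chenevier2014p} or the analysis in \cite[\S 1.3--1.4]{bellaiche2009families}. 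I expect the main obstacle to be part (3) and the precise sense in which $A_{i,j}^{D}$ are fractional rather than integral ideals — one must be careful that $\phi$ can enlarge the ideals (that is why $A_{i,j}^{D}$ need not be integral even though $A_{i,j}^{\sigma}$ is), and pinning down that $A_{i,j}^{D}$ is still a finitely generated $R$-submodule of $\operatorname{Frac}(R)$ that is invertible at the relevant localizations requires invoking the generic irreducibility of $D$ and the reducedness of $R$ carefully, rather than any new computation.
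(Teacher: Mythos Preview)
Your approach is the same as the paper's: (1) invokes the general GMA structure theorem for Cayley--Hamilton algebras with multiplicity-free residual representation (the paper cites \cite[Thm.~2.22]{chenevier2014p}, you cite \cite[\S1.4]{bellaiche2009families}/\cite{wake2019deformation}); (2) uses that $\phi$ carries idempotents to idempotents and is surjective (the paper cites \cite[Lem.~2.5]{berger2013deformation}); (3) uses reducedness of $R$ and the total-ring-of-fractions picture for the fractional-ideal claim.

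There is one genuine imprecision in your part (3). Your sentence ``the multiplication maps exhibit $A_{i,j}^{\sigma}$ as an $R$-submodule of $R$'' does not explain why $A_{i,j}^{\sigma}$ lives inside $R$ at all: the abstract GMA data only gives you an $R$-module $A_{i,j}^{\sigma}$ together with pairings $A_{i,j}^{\sigma}\otimes A_{j,k}^{\sigma}\to A_{i,k}^{\sigma}$, and nothing about those pairings forces $A_{i,j}^{\sigma}\subset R$. The actual reason, which the paper isolates by citing \cite[Prop.~1.3.8]{bellaiche2009families}, is that $\sigma$ is an honest representation, so $E^{\sigma}=R[\G]/\ker\sigma$ embeds as an $R$-subalgebra of $M_n(R)$; this is an \emph{adapted} morphism of GMAs in the sense of \cite[\S1.3]{bellaiche2009families}, and that proposition then identifies each $A_{i,j}^{\sigma}$ with an $R$-submodule of $R$. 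This is exactly what separates the $\sigma$-case from the $D$-case: $E^{D}$ has no such embedding a priori, which is why one only gets fractional ideals for $A_{i,j}^{D}$. Once you insert this observation your argument is complete and matches the paper's.
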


\begin{proof}
It follows from Theorem 2.22 in \cite{chenevier2014p} that 
$(E^{\star}, T_D)$ are GMAs in the sense of \cite[\S 1.3]{chenevier2014p}. $(E^{\star}, T_D)$ is compatible with $D$ by Proposition 2.23 in \cite{wang2018algebraic} and (1) holds. Note that the proof does not require $d! \in R^{\times}$ as in the proof of Theorem 1.4.4 in \cite{bellaiche2009families}. (2) follows the same argument as in Lemma 2.5 in \cite{berger2013deformation}. For (3), because $R$ is Noetherian and reduced, its total fraction ring is a finite product of fields, and it follows from a similar argument as in Theorem 1.4.4 (ii) in \cite{bellaiche2009families} that $A_{i,j}^{D}$ are fractional ideals. By Proposition 1.3.8 in \cite{bellaiche2009families}, $A_{i,j}^{\sigma}$ are $R$-submodules of $R$.


\end{proof}


\subsubsection{\textsf{Reducibility ideal in $R$}}

The reducibility ideal can be explicitly written. If $\cP = \{1\} \cup \{2\} \cup \{3\}$, we call $I_{\cP,\R}$ the \textbf{total reducibility ideal} of $\R$, and denote it by $I^{\tot}_{\R}$. The following proposition is on $I_{\cP,R}$, but a similar argument could be applied to any $I_{\cP,\R}$. To shorten notation, let $I^{\tot} = I^{\tot}_{R}$.

\begin{prop}{\label{red ideal structure}}
For $\star \in \{\sigma,D\}$, we have
$$I_{\cP,R} = \sum_{\substack{(i,j)\\
                  i,j \text{ not in the same }\cP_l}}
                 A_{i,j}^{\star} A_{j,i}^{\star}.
$$
And $I^{\tot} = A_{1,2}^{\star} A_{2,1}^{\star} + A_{1,3}^{\star} A_{3,1}^{\star} + A_{2,3}^{\star} A_{3,2}^{\star}$.
\end{prop}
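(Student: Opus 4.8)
The plan is to unwind the GMA structure of $E^{\star}$ from Lemma \ref{GMA} and to translate the ``reducibility'' condition of Definition \ref{red def} into an ideal-theoretic condition on the off-diagonal products $A_{i,j}^{\star}A_{j,i}^{\star}$. First I would fix the partition $\cP = (\cP_1,\dots,\cP_s)$ and set $J_0 = \sum_{(i,j)} A_{i,j}^{\star}A_{j,i}^{\star}$, the sum running over pairs $i,j$ lying in distinct blocks of $\cP$. By Lemma \ref{GMA}(1) each $A_{i,j}^{\star}A_{j,i}^{\star}\subset\mathfrak m$, so $J_0$ is a proper ideal of $R$, and by part (3) of that lemma these are genuine (fractional, resp.\ integral) ideals, so $J_0$ makes sense. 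The goal is then to show $J_0 = I_{\cP,R}$, i.e.\ that $J_0$ is the \emph{smallest} ideal $J$ such that $D\otimes R/J$ is reducible along $\cP$.

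The two inclusions go as follows. For $I_{\cP,R}\subseteq J_0$: working over $R/J_0$, the image of $E^{\star}$ in $M_n(\cdot)$ becomes block-upper-triangular relative to the blocks of $\cP$ — more precisely, once we kill all products $A_{i,j}^{\star}A_{j,i}^{\star}$ across blocks, the associated GMA $E^{\star}\otimes R/J_0$ admits a homomorphism to the product $\prod_{l} M_{\cP_l}(R/J_0)$ of its ``diagonal blocks'', and composing $\sigma$ (or $D$) with the projections gives pseudorepresentations $D'_l$ of $G_{\Sigma}$ over $R/J_0$ with $T_D\otimes R/J_0 = \sum_l T_{D'_l}$ and $T_{D'_l}\otimes k = \tr(\bigoplus_{j\in\cP_l}\rho_j)$; this is exactly conditions (1)(2) of Definition \ref{red def}, so $D\otimes R/J_0$ is reducible and hence $I_{\cP,R}\subseteq J_0$. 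For the reverse inclusion $J_0\subseteq I_{\cP,R}$: given any ideal $J$ with $D\otimes R/J$ reducible along $\cP$, with decomposition $T_D\otimes R/J = \sum_l T_{D'_l}$, one uses the characterization of $\ker D$ and the idempotent structure — the idempotents $e_i^{\star}$ reduce mod $\m$ to the projections onto the blocks $\rho_i$, and reducibility forces the ``cross terms'' $e_i^{\star}E^{\star}e_j^{\star}\cdot e_j^{\star}E^{\star}e_i^{\star}$ (for $i,j$ in different blocks) to vanish in $R/J$; this is essentially Bellaïche--Chenevier \cite[Prop.\ 1.5.1 and Thm.\ 1.4.4]{bellaiche2009families} adapted to Chenevier's pseudorepresentation language, where the needed facts hold without $d!\in R^{\times}$ by \cite[\S1.17, Thm.\ 2.22]{chenevier2014p}. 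It follows that $A_{i,j}^{\star}A_{j,i}^{\star}\subseteq J$ for all such $(i,j)$, hence $J_0\subseteq J$; taking $J = I_{\cP,R}$ gives the claim. The final formula $I^{\tot} = A_{1,2}^{\star}A_{2,1}^{\star} + A_{1,3}^{\star}A_{3,1}^{\star} + A_{2,3}^{\star}A_{3,2}^{\star}$ is the special case $\cP = \{1\}\cup\{2\}\cup\{3\}$, where every pair $(i,j)$ with $i\neq j$ lies in distinct blocks.

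Finally, the fact that the formula is independent of the choice $\star\in\{\sigma,D\}$ is immediate from Lemma \ref{GMA}(2): the surjection $\phi\colon E^{\sigma}\twoheadrightarrow E^{D}$ sends $e_i^{\sigma}\mapsto e_i^{D}$ and $A_{i,j}^{\sigma}\mapsto A_{i,j}^{D}$ and is the identity on the scalar ring $R$, so it carries the $\sigma$-version of the sum onto the $D$-version; but $\phi$ restricted to $R$ is an isomorphism, and both sums are ideals of $R$, so they coincide. I expect the main obstacle to be the reverse inclusion $J_0\subseteq I_{\cP,R}$: one must argue carefully that an \emph{abstract} reducible decomposition $T_D\otimes R/J = \sum T_{D'_l}$ of pseudorepresentations — not \emph{a priori} coming from a block-triangular conjugation — nonetheless forces the idempotent cross-products into $J$. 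This is where the GMA formalism of \cite{bellaiche2009families} and the residual multiplicity-one input (the $\rho_i$ pairwise non-isomorphic, hence the block decomposition mod $\m$ is canonical) do the real work, and care is needed to run these arguments in Chenevier's generality rather than under the hypothesis $d!\in R^{\times}$.
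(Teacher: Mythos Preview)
Your proposal is correct and follows essentially the same approach as the paper, which simply cites \cite[Proposition~1.5.1]{bellaiche2009families} and \cite[Corollary~6.5]{berger2020deformations} for the Cayley--Hamilton quotients $(E^D,T_D)$ and $(E^{\sigma},T_D)$; you have unpacked exactly the argument those references contain, namely the two inclusions via the GMA block structure and the intrinsic nature of the reducibility ideal. The one place to be slightly more careful is in your independence-of-$\star$ paragraph: rather than arguing via $\phi|_R$, it is cleaner to observe that $I_{\cP,R}$ is defined purely in terms of the pseudorepresentation $T_D$ (Definition~\ref{red def}), so the formula computed from \emph{any} Cayley--Hamilton quotient of $(R[G_{\Sigma}],T_D)$ must yield the same ideal of $R$ --- this is the point of \cite[Proposition~1.5.1]{bellaiche2009families} and sidesteps having to track how the GMA multiplication in $E^{\sigma}$ versus $E^{D}$ interacts with $\phi$.
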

\begin{proof}
This follows from a similar argument as in Corollary 6.5 in \cite{berger2020deformations} by considering the Cayley-Hamilton quotients $(E^D, T_D)$ and $(E^{\sigma}, T_D)$  (cf. \cite[Definition 1.2.3]{bellaiche2009families}). Alternatively, the details can also be found in Proposition 1.5.1 in \cite{bellaiche2009families}.
\end{proof}
\begin{rmk}
The Proposition above states the identification of the reducibility ideals in $R$. A similar identification can be realized for any ring $\R$ following a similar proof.
\end{rmk}

\begin{rmk}
While Proposition 1.5.1 in \cite{bellaiche2009families} requires that $d! \in R$, an alternative proof has been provided in Proposition 2.5 in \cite{allen2020automorphy} to remove the $d! \in R$. We would like to thank Tobias Berger for the helpful discussion.
\end{rmk}

\subsubsection{\textsf{$A_{i,j}$ and Selmer groups}}
Let $J \subset \R$ be an ideal such that $I^{\tot}_{\R} \subset J$. Let $\rho_{i,\FL}: G_{\Sigma} \to \GL_{d_{i}}(\R/J)$ be the quotient of $E^{\FL}_{\D} \otimes_{\Rpseudo} \Rpseudo/J$ corresponding to $\rho_i$ and is Fontaine--Laffaille at $p$. For each $v \mid p$, let $G_{F_{v}}$ be the absolute Galois group of $F_{v}$. By an abuse of notation, we denote $\rho_{i,\FL}|_{G_{F_{v}}}$ as $\rho_{i,\FL}$ as well, and let $D_i$ be the filtered Dieudonn\'{e} module such that $\rho_{i,\FL} = \mathbf{G}(D_i)$. Again, let $\Ext^{1}_{\R/J[G_{F_{v}}],\FL} (\rho_{i,\FL}, \rho_{j,\FL})$ be the image of $\Ext^1_{\mathcal{MF_{O}}}(D_i,D_j)$ in $\Ext^{1}_{\R/J[G_{F_{v}}]} (\rho_{i,\FL}, \rho_{j,\FL})$, $i \ne j$. Note that by definition we have
$$
\Ext^{1}_{\R/J[G_{F_{v}}],\FL} (\rho_{j,\FL}, \rho_{i,\FL}) = H^1_f(F_{v},\Hom(\rho_{j,\FL}, \rho_{i,\FL})).
$$

Then, we define the Fontaine--Laffaille at $p$ Ext group $\Ext^{1}_{\R/J[G_{\Sigma}], \FL} (\rho_{j,\FL}, \rho_{i,\FL})$ as
$$
\Ext^{1}_{\R/J[G_{\Sigma}], \FL} (\rho_{j,\FL}, \rho_{i,\FL}) := \ker \bigg(
\Ext^{1}_{\R/J[G_{\Sigma}]} (\rho_{j,\FL}, \rho_{i,\FL})
\to 
\prod_{v \in \Sigma_p}
\frac{\Ext^{1}_{\R/J[G_{F_{v}}]} (\rho_{j,\FL}, \rho_{i,\FL})}{\Ext^{1}_{\R/J[G_{F_{v}}],\FL} (\rho_{j,\FL}, \rho_{i,\FL})}
\bigg),
$$
And by definition,
$$
\Ext^{1}_{\R/J[G_{\Sigma}], \FL} (\rho_{j,\FL}, \rho_{i,\FL}) \cong H^1_{\Sigma}(F,\Hom(\rho_{j,\FL}, \rho_{i,\FL})).
 $$


The next two Propositions identify $\R$-modules in the GMA structure of $E$ with $\R[\G]$-module extensions. They are essential for us to identify the structure of $I^{\tot}$.

\begin{prop}\label{Ext with C pseudo}
(Hom-Ext isomorphism)
Let $J \subset \Rpseudo$ be an ideal of $\Rpseudo$ such that $I^{\tot}_{\Rpseudo} \subset J$. Let $\rho_{i,\FL}: G_{\Sigma} \to \GL_{d_{i}}(\Rpseudo/J)$ be a deformation of $\rho_i$ that is Fontaine--Laffaille at $p$.
Let $\A_{i,j}' = \sum_{k \ne i,j} \A_{i,k}^{\FL} \A_{k,j}^{\FL}$, then there is an isomorphism
\begin{equation}{\label{hom-ext iso}}
    \Hom_{\Rpseudo}(\A_{i,j}^{\FL}/\A_{i,j}',\Rpseudo/J) \cong \Ext^{1}_{\Rpseudo/J[G_{\Sigma}], \FL} (\rho_{j,\FL}, \rho_{i,\FL})
\end{equation} 
\end{prop}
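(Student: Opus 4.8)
The plan is to produce the isomorphism in two stages: first an algebraic isomorphism between $\Hom_{\Rpseudo}(\A_{i,j}^{\FL}/\A_{i,j}',\Rpseudo/J)$ and the full Ext group $\Ext^{1}_{\Rpseudo/J[G_{\Sigma}]}(\rho_{j,\FL},\rho_{i,\FL})$ coming purely from the GMA structure, and then a check that this isomorphism carries the submodule $\Hom_{\Rpseudo}(\A_{i,j}^{\FL}/\A_{i,j}',\Rpseudo/J)$ onto the Fontaine–Laffaille-subspace $\Ext^{1}_{\Rpseudo/J[G_{\Sigma}],\FL}(\rho_{j,\FL},\rho_{i,\FL})$ cut out by the local conditions at $v \mid p$. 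For the first stage I would follow the strategy of Bella\"{i}che–Chenevier \cite[\S 1.3, \S 1.5]{bellaiche2009families} and of \cite{wake2019deformation}: working inside the Cayley–Hamilton algebra $E_{\D}^{\FL}$ with its GMA data $\E^{\FL}_{\D} = \{e^{\FL}_{\D,i},\psi^{\FL}_{\D,i}\}$, one forms the quotient ring $E_{\D}^{\FL}/(\text{two-sided ideal generated by }\A_{i,k}^{\FL}\A_{k,j}^{\FL}\text{ for }k\neq i,j\text{ and by }J)$ and observes that the $(i,j)$-block of the resulting algebra is $\A_{i,j}^{\FL}/\A_{i,j}'$ tensored down to $\Rpseudo/J$, which governs extensions between the $i$-th and $j$-th diagonal blocks. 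Concretely, an $\Rpseudo$-linear map $f:\A_{i,j}^{\FL}/\A_{i,j}'\to\Rpseudo/J$ determines a two-sided-ideal quotient of $E$ supported in blocks $i,j$ only, hence a representation on $\rho_{i,\FL}\oplus\rho_{j,\FL}$ which is a (possibly trivial) extension of $\rho_{j,\FL}$ by $\rho_{i,\FL}$; conversely an extension class produces such an $f$ by reading off the off-diagonal cocycle. The key input making this a bijection (rather than a surjection with kernel, as in the naive injection used in \cite{berger2020deformations}) is exactly that $(E_{\D}^{\FL}, \rho_{\D}^{\FL}, D_{\D}^{\FL})$ is the \emph{universal} Fontaine–Laffaille pseudodeformation, so by its defining universal property any such extension on $\Rpseudo/J$-points is induced from $E_{\D}^{\FL}$; this is the degree-$d$ generalization of \cite[Thm.~2.5.5]{wake2019deformation} and is where the representability/universality of $\text{PsDef}^{\FL}_{\D}$ does the work.

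For the second stage I would argue that the local conditions on both sides match block by block. On the Ext side the Fontaine–Laffaille condition is, by definition in Section \ref{Identifying the Reducibility Ideal}, the kernel of restriction to $\prod_{v\in\Sigma_p}\Ext^1_{\Rpseudo/J[G_{F_v}]}/\Ext^1_{\Rpseudo/J[G_{F_v}],\FL}$, and $\Ext^1_{\Rpseudo/J[G_{F_v}],\FL}(\rho_{j,\FL},\rho_{i,\FL})$ is precisely the image of $\Ext^1_{\mathcal{MF}_\mO}(D_j,D_i)$. On the Hom side, the fact that $E_{\D}^{\FL}$ is built from the functor $\mathbf{G}$ — i.e. $E_{\D}^{\FL}$ is Fontaine–Laffaille at $p$ as an $\Rpseudo[G_{\Sigma}]$-module, and by Theorem \ref{N has C} so is any faithful Cayley–Hamilton module over it — forces the off-diagonal block $\A_{i,j}^{\FL}$ to ``live in the essential image of $\mathbf{G}$'' locally at $p$, so that the extension attached to $f$ is automatically crystalline at $p$; this is the content that upgrades the bare algebraic bijection to an isomorphism of the FL-restricted groups. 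I would cite Theorem \ref{N has C} (= \cite[Thm.~2.6.4]{wake2019deformation}) here as the precise statement that transfers the Fontaine–Laffaille property between the Cayley–Hamilton algebra and its modules.

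The main obstacle I anticipate is the first stage: correctly identifying $\A_{i,j}^{\FL}/\A_{i,j}'$ — rather than $\A_{i,j}^{\FL}$ itself — as the module representing extensions, which requires carefully quotienting out the ``composite'' directions $\A_{i,k}^{\FL}\A_{k,j}^{\FL}$ (those that would land in a longer filtration through the $k$-th block) and checking that after this quotient the relevant $2\times 2$ block subalgebra is genuinely Cayley–Hamilton of the right shape so that \cite{wake2019deformation}'s machinery applies to it. One must also be attentive that $\Rpseudo/J$ need not be reduced, so $\A_{i,j}^{\FL}$ is a priori only a fractional-ideal-type module and not free; this is handled by the hypothesis $I^{\tot}_{\Rpseudo}\subset J$, which guarantees $\A_{i,k}^{\FL}\A_{k,i}^{\FL}\subset J$ and hence that the reducible quotient over $\Rpseudo/J$ really does split into the three diagonal pieces, making the block-by-block analysis legitimate. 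Once these points are in place the desired isomorphism \eqref{hom-ext iso} follows, and (as the remark preceding the proposition notes) feeding in Assumption \ref{main assumptions} will later pin down the dimensions of these Ext groups and thereby the principality of $I^{\tot}$.
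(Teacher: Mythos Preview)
Your proposal assembles the same ingredients as the paper—Bella\"{i}che–Chenevier's GMA Ext computation (\cite[Thm.~1.5.6]{bellaiche2009families}), Theorem~\ref{N has C}, and the universal property of $(E_{\D}^{\FL},\rho_{\D}^{\FL},D_{\D}^{\FL})$—but the two-stage logical structure you outline is inverted and, as stated, false. You claim in Stage~1 an isomorphism between $\Hom_{\Rpseudo}(\A_{i,j}^{\FL}/\A_{i,j}',\Rpseudo/J)$ and the \emph{full} Ext group $\Ext^{1}_{\Rpseudo/J[G_{\Sigma}]}(\rho_{j,\FL},\rho_{i,\FL})$, justified by the universal property of the FL pseudodeformation. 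But that universal property only applies to pseudodeformations that are Fontaine--Laffaille: a non-crystalline extension $E_{i,j}$ does \emph{not} yield an object of $\text{PsDef}^{\FL}_{\D}$, so universality gives you no map $E^{\red}\to\tilde{E}_c$ and hence no preimage in the Hom group. Consequently the map to full Ext is only an injection, and your Stage~2 (``carry the whole Hom onto the FL subspace'') would force full Ext $=$ FL Ext, which is false in general.

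The paper's argument runs in the order you should adopt: (i) Bella\"{i}che--Chenevier gives an \emph{isomorphism} $\Hom_{\Rpseudo}(\A_{i,j}^{\FL,\red}/\A_{i,j}^{\red,'},\Rpseudo/J)\cong\Ext^1_{E^{\red}}(\rho_{j,\FL},\rho_{i,\FL})$, hence an \emph{injection} into $\Ext^{1}_{\Rpseudo/J[G_{\Sigma}]}$; (ii) since $E^{\FL}_{\D}$ (and thus $E^{\red}$ and its column modules $M_j$) is Fontaine--Laffaille, the pushout $(M_j\oplus\rho_{i,\FL})/Q$ appearing in $\delta(f)$ is a subquotient of FL modules and therefore FL—this is where Theorem~\ref{N has C} enters, and it shows the image of the injection lies in $\Ext^1_{\FL}$; (iii) only now does universality give surjectivity, and only onto $\Ext^1_{\FL}$: given an FL extension $c$, form $\tilde{E}_c=E_{i,j}\oplus\rho_{k,\FL}$, which \emph{is} an FL Cayley--Hamilton representation, obtain the GMA morphism $g:E^{\red}\to\tilde{E}_c$ from universality, and read off $\tilde{g}$ from the $(i,j)$-block. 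Your ingredients are right; reorganize them this way and the proof goes through.
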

\begin{proof}
First, {as $(\A_{i,j}^{\FL}/\A_{i,j}') \otimes_{\Rpseudo} \Rpseudo/J \cong (\A_{i,j}^{\FL}/\A_{i,j}')/J(\A_{i,j}^{\FL}/\A_{i,j}')$}, we have the following exact sequence 
$$
0 \to J(\A_{i,j}^{\FL}/\A_{i,j}') \to \A_{i,j}^{\FL}/\A_{i,j}' \to (\A_{i,j}^{\FL}/\A_{i,j}') \otimes_{\Rpseudo} \Rpseudo/J  \to 0.
$$
By left exactness of the contravariant functor $\Hom_{\Rpseudo}(-,\Rpseudo/J)$, we get 
\newline
$\Hom_{\Rpseudo}(\A_{i,j}^{\FL}/\A_{i,j}',\Rpseudo/J) = \Hom_{\Rpseudo}((\A_{i,j}^{\FL}/\A_{i,j}') \otimes_{\Rpseudo} \Rpseudo/J,\Rpseudo/J)$ 
\newline
as $\Hom_{\Rpseudo}(J(\A_{i,j}^{\FL}/\A_{i,j}'),\Rpseudo/J) = 0$. 


To simplify notation, let $E^{\red} = E^{\FL}_{\D} \otimes_{\Rpseudo} \Rpseudo/J$.  
$E^{\red}$ has GMA structure
$$
E^{\red} \cong
\begin{pmatrix}
    M_{1,1}(\Rpseudo/J) & M_{1,2}(\A_{1,2}^{\FL,\red}) & M_{1,3}(\A_{1,3}^{\FL,\red})\\
    M_{2,1}(\A_{2,1}^{\FL,\red}) & M_{2,2}(\Rpseudo/J) & M_{2,3}(\A_{2,3}^{\FL,\red})\\
    M_{3,1}(\A_{3,1}^{\FL,\red}) & M_{3,2}(\A_{3,2}^{\FL,\red}) & M_{3,3}(\Rpseudo/J)
\end{pmatrix}
$$
with induced idempotents data $\E^{\red} = \{e_i^{\red}, \psi_i^{\red}\}$ of $\E^{\sigma}$ in $E^{\red}$ i.e. $e_i^{\red}$ is the image of $e^{\FL}_{\D,i}$ in $E^{\red}$ and $\psi^{\red}_i: e_i^{\red} E^{\red} e_i^{\red} \to M_{i,i}(\Rpseudo/J)$ is an isomorphism with $\psi^{\red}_i \otimes k = \rho_i$.
Let $\A_{i,j}^{\red,'} = \sum_{k \ne i,j} \A_{i,k}^{\FL,\red} \A_{k,j}^{\FL,\red}$.

Note that as $M_{i,j}(\A_{i,j}^{\FL,\red}) \cong e_i^{\red} (E^{\FL} \otimes \Rpseudo/J) e_j^{\red} \cong M_{i,j}(\A_{i,j}^{\FL}) \otimes_{\Rpseudo} \Rpseudo/J \cong M_{i,j}(\A_{i,j}^{\FL} \otimes_{\Rpseudo} \Rpseudo/J)$, by Morita equivalence we get $\A_{i,j}^{\FL} \otimes_{\Rpseudo} \Rpseudo/J \cong \A_{i,j}^{\FL,\red}$. 

As $(\A_{i,j}^{\FL}/\A_{i,j}^{\FL,'}) \otimes_{\Rpseudo} \Rpseudo/J \cong (\A_{i,j}^{\FL} \otimes_{\Rpseudo} \Rpseudo/J)/(\A_{i,j}^{\FL,'} \otimes_{\Rpseudo} \Rpseudo/J) \cong \A_{i,j}^{\FL,\red}/((\A_{i,k}^{\FL} \otimes_{\Rpseudo} \A_{k,j}^{\FL}) \otimes_{\Rpseudo} \Rpseudo/J) \cong \A_{i,j}^{\FL,\red}/(\A_{i,k}^{\FL,\red} \A_{k,j}^{\FL,\red}) \cong \A_{i,j}^{\FL,\red}/\A_{i,j}^{\red,'}$, it suffices to show that 
\begin{equation}
    \Hom_{\Rpseudo}(\A_{i,j}^{\FL,\red}/\A_{i,j}^{\red,'},\Rpseudo/J) \cong \Ext^{1}_{\Rpseudo/J[\G], \FL} (\rho_{j,\FL}, \rho_{i,\FL})
\end{equation}

Recall that $M_{i,i}(\Rpseudo/J)$ has dimension $n_i \times n_i$. Following notation in Theorem 1.5.6 in \cite{bellaiche2009families}, let $E_i \in e_i^{\red} E^{\red} e_i^{\red} $ be the unique element such that $\psi_i^{\red}(e_i^{\red} E^{\red} e_i^{\red})$ is the elementary matrix of $M_{d_i}(\Rpseudo/J)$ with unique nonzero coefficient at row 1 and column 1. Let 
$$
M_j = E^{\red} E_j = \bigoplus_{i=1}^{3} e_i^{\red} E^{\red} E_j, \qquad N_j = \bigoplus_{i \ne j} e_i^{\red} E^{\red} E_j,
$$
then by the proof of Theorem 1.5.6 in \cite{bellaiche2009families}, we have an isomorphism
\begin{equation}{\label{eq3.12}}
    \Hom_{\Rpseudo}(\A_{i,j}^{\FL,\red}/\A_{i,j}^{\red,'},\Rpseudo/J) \cong \Ext^{1}_{E^{\red}} (\rho_{j,\FL}, \rho_{i,\FL})
\end{equation}
via a map $\delta$ such that for $f \in \Hom_{\Rpseudo}(\A_{i,j}^{\FL,\red}/\A_{i,j}^{\red,'},\Rpseudo/J) \cong \Hom_{E^{\red}}(N_j, \rho_{i,\FL})$, $\delta(f)$ is the following short exact sequence of $E^{\red}$-module:
\begin{equation}\label{ext seq}
\begin{tikzcd}
        \quad & 0 \arrow[r] &
        N_{j} \arrow[r] \arrow[d, "f"] &
        M_{j} \arrow[r] \arrow[d] &
        \rho_{j,\FL} \arrow[r] \arrow[d] &
        0
        \\
        \delta(f): & 0 \arrow[r] &
        \rho_{i,\FL} \arrow[r] &
        \frac{M_j \oplus \rho_{i,\FL}}{Q} \arrow[r] &
        \rho_{j,\FL} \arrow[r] & 
        0
    \end{tikzcd}
\end{equation}
where $Q$ is the image of $N_j$ via $E^{\red}$-linear map $u: N_j \to M_j \oplus \rho_{i,\FL}$, and $\frac{M_j \oplus \rho_{i,\FL}}{Q} \cong e_j^{\red} E^{\red} E_j \oplus e_i^{\red} E^{\red}  E_i$ as $\Rpseudo/J$-modules.

\eqref{eq3.12} gives that $\Hom_{\Rpseudo}(\A_{i,j}^{\FL,\red}/\A_{i,j}^{\red,'},\Rpseudo/J) \hookrightarrow \Ext^{1}_{\Rpseudo/J[G_{\Sigma}]} (\rho_{j,\FL}, \rho_{i,\FL})$ is an injection. It remains to show that $\delta(f)$ is a Fontaine--Laffaille at $p$ extension to show such extensions are in $\Ext^{1}_{\Rpseudo/J[G_{\Sigma}], \FL} (\rho_{j,\FL}, \rho_{i,\FL})$. We need to show that $\frac{M_j \oplus \rho_{i,\FL}}{Q}$ is Fontaine--Laffaille at $p$ as $\Rpseudo/J[G_{\Sigma}]$ module. Recall that the Fontaine--Laffaille at $p$ condition is closed under subquotients and direct sums, therefore it suffices to prove that $M_j$ is Fontaine--Laffaille at $p$. By Theorem \ref{N has C}, $M_j$ is a Fontaine--Laffaille extension if $E^{\red}$ is a Fontaine--Laffaille at $p$ as an $\Rpseudo/J[G_{\Sigma}]$-module, which follows from that $E^{\FL}_{\D}$ is Fontaine--Laffaille at $p$.

We need to show that \eqref{hom-ext iso} is a surjection. 
Suppose $c \in \Ext^{1}_{\Rpseudo/J[G_{\Sigma}], \FL} (\rho_{j,\FL}, \rho_{i,\FL})$, then
$$
c: \qquad 0 \to \rho_{i,\FL} \to E_{i,j} \to \rho_{j,\FL} \to 0 
$$
for some $E_{i,j}$. 
We can construct a Fontaine--Laffaille at $p$ $\Rpseudo/J[\G]$-module $\title{E}_c = E_{i,j} \oplus \rho_{k,FL}$ where $k \ne i,j$ from $E_{i,j}$, where the natural $\G$ action factors through $E^{\red}$, and a reducible Fontaine Laffaille at $p$ pseudodeformation of $\D$, $(\title{E}_c, \rho_c, D_c: \Rpseudo/J[G_{\Sigma}] \to \Rpseudo/J)$. Choose a GMA structure $\{e_i^{c}, \psi_i^{c}\}$ for $\title{E}_c$ such that $e_i^{c} \title{E}_c e_i^{c} = \rho_{i,\FL}$ for $i = 1,2,3$. 

By universality of $(E_{\D}^{\FL},\rho_{\D}^{\FL},D_{\D}^{\FL})$ and $D_c$ being totally reducible, we obtain a morphism of GMAs: $E_{\D}^{\FL} \otimes (\Rpseudo/I_{R_{\D}^{\FL}}^{\tot}) \to E_c$ where $I_{R_{\D}^{\FL}}^{\tot}$ is the total reducibility ideal of $R_{\D}^{\FL}$. This morphism then factors through $E^{\red}$ as $I_{R_{\D}^{\FL}}^{\tot} \subset J$ and $R/I_{R_{\D}^{\FL}}^{\tot} \otimes R/J \cong R/J$, and we obtain a morphism of GMAs $g: E^{\red} \to E_c$ such that $g(e_i^{\red}) = e_i^{c}$ for all $i$ (cf. Theorem 3.2.2 in \cite{wake2019deformation}). Then by Morita equivalence, $g|_{e_i^{\red} E^{\red} e_j^{\red}}$ as a $\Rpseudo$-module homomorphism induces an element
$\Tilde{g} \in \Hom_{\Rpseudo}(\A_{i,j}^{\FL,\red},\Rpseudo/J)$. As $g(e_i^{\red} E^{\red} e_k^{\red}) = 0$, $\Tilde{g}$ is trivial on $\A_{i,j}^{\red,'}$ and $\Tilde{g} \in \Hom_{\Rpseudo}(\A_{i,j}^{\FL,\red}/\A_{i,j}^{\red,'},\Rpseudo/J)$.  And $c = \delta(\Tilde{g})$ by such construction. Thus $\delta$ is a surjection.
\end{proof}

\begin{prop}\label{Ext with C}
Let $J \subset R$ be an ideal such that $I^{tot} \subset J$. Let $A_{i,j}' = \sum_{k \ne i,j} A_{i,k}^{\sigma} A_{k,j}^{\sigma}$. Then there is an injection
\begin{equation}{\label{ext eq for S}}
    \Hom_{R}(A_{i,j}^{\sigma}/A_{i,j}',R/J) \hookrightarrow \Ext^{1}_{R/J[G_{\Sigma}], \FL} (\rho_{j,\FL}, \rho_{i,\FL})
\end{equation} 
\end{prop}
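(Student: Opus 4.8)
The plan is to deduce Proposition~\ref{Ext with C} from Proposition~\ref{Ext with C pseudo} by exploiting the canonical surjection $\phi: E^{\sigma} \twoheadrightarrow E^{D}$ together with the universal property that realizes $E^{\sigma}$ as a pushout of $E^{\FL}_{\D}$. Concretely, the universal representation $\sigma$ over $R$ with the Fontaine-Laffaille and $\tau$-self-dual conditions gives rise to the pseudorepresentation $D = \det\circ\sigma$, and by Theorem~\ref{N has C} (applied to the faithful Cayley-Hamilton $G_{\Sigma}$-module underlying $\sigma$) together with the universality of $(E^{\FL}_{\D},\rho^{\FL}_{\D},D^{\FL}_{\D})$, there is a map $R^{\FL}_{\D} \to R$ inducing a morphism of GMAs $E^{\FL}_{\D}\otimes_{R^{\FL}_{\D}} R \to E^{\sigma}$ compatible with the idempotent data; in particular $A^{\sigma}_{i,j}$ is a quotient of $A^{\FL}_{\D,i,j}\otimes_{R^{\FL}_{\D}} R$ (here one uses Lemma~\ref{GMA}(2),(3) and Morita equivalence, exactly as in the proof of Proposition~\ref{Ext with C pseudo}).

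First I would fix an ideal $J\subset R$ with $I^{\tot}\subset J$ and $R/J\in\hat{\C}_{W(k)}$, and a Fontaine-Laffaille deformation $\rho_{i,\FL}$ of $\rho_i$ to $\GL_{d_i}(R/J)$; by the argument in Theorem~1.5.6 of \cite{bellaiche2009families} applied verbatim to the GMA $E^{\sigma}\otimes_R R/J$ (whose idempotent data reduces the diagonal blocks to $\rho_i$), one gets a canonical injection
$$
\Hom_{R}(A_{i,j}^{\sigma}/A_{i,j}',R/J) \hookrightarrow \Ext^{1}_{E^{\sigma}\otimes_R R/J}(\rho_{j,\FL},\rho_{i,\FL}) \hookrightarrow \Ext^{1}_{R/J[G_{\Sigma}]}(\rho_{j,\FL},\rho_{i,\FL}),
$$
the first map being the map $\delta$ of \eqref{ext seq} (now built from $E^{\sigma}\otimes_R R/J$ rather than $E^{\red}$) and the second being restriction of scalars along $R/J[G_{\Sigma}]\to E^{\sigma}\otimes_R R/J$. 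So the content left is to show that every extension class in the image actually lands in the Fontaine-Laffaille subspace $\Ext^{1}_{R/J[G_{\Sigma}],\FL}(\rho_{j,\FL},\rho_{i,\FL}) \cong H^1_{\Sigma}(F,\Hom(\rho_{j,\FL},\rho_{i,\FL}))$. This is the same local check as in Proposition~\ref{Ext with C pseudo}: the extension $\delta(f)$ is a subquotient of $M_j\oplus\rho_{i,\FL}$, which is built out of $E^{\sigma}\otimes_R R/J$; since $E^{\sigma}$ is a quotient (as $R[G_{\Sigma}]$-module) of $R[G_{\Sigma}]$, but more to the point $E^{\sigma}$ sits between $R[G_{\Sigma}]$ and $E^{D}$, and $E^{D}=E^{\FL}_{\D}\otimes R$ is Fontaine-Laffaille at $p$, while the Fontaine-Laffaille condition is closed under subquotients and direct sums, one concludes $\delta(f)$ is Fontaine-Laffaille at $p$ at every $v\mid p$, hence its restriction to $G_{F_v}$ lies in $H^1_f(F_v,-)$; combined with the unramified-at-$\Sigma\setminus\Sigma_p$ behavior coming from membership in $H^1(G_{\Sigma},-)$, the class lies in $H^1_{\Sigma}$.

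The main obstacle, and the reason this is only an injection rather than the isomorphism of Proposition~\ref{Ext with C pseudo}, is the surjectivity direction: given $c\in\Ext^1_{R/J[G_{\Sigma}],\FL}(\rho_{j,\FL},\rho_{i,\FL})$, one can form the reducible Fontaine-Laffaille pseudodeformation $(\Tilde E_c,\rho_c,D_c)$ and, by universality of $E^{\FL}_{\D}$, get a GMA morphism $E^{\red}\to\Tilde E_c$ — but there is no corresponding universal property for $E^{\sigma}$ as a \emph{representation} over $R/J$ (the functor $R$ represents is the deformation functor of $\overline{\sigma}$, not a pseudodeformation functor), so one cannot in general lift $c$ to a genuine representation over $R/J$ deforming $\overline{\sigma}$ and thereby produce the required element of $\Hom_R(A^{\sigma}_{i,j}/A'_{i,j},R/J)$. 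Thus I would stop at the injection, which is all that is claimed, and note that the key verification reduces — via the fact that $A^{\sigma}_{i,j}$ surjects onto (a submodule of) $A^{D}_{i,j}$ under $\phi$ — to the Fontaine-Laffaille closure argument already carried out for the pseudorepresentation case.
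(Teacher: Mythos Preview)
Your overall approach is the same as the paper's: run the Bella\"{i}che--Chenevier $\delta$-map construction (Theorem~1.5.6 of \cite{bellaiche2009families}) on the GMA $E^{\sigma}\otimes_R R/J$, obtain the injection into $\Ext^1_{R/J[G_{\Sigma}]}$, verify that the image lands in the Fontaine--Laffaille subgroup, and then observe that surjectivity fails because $E^{\sigma}$ has no universal property among FL Cayley--Hamilton representations. The paper's proof is literally ``same as Proposition~\ref{Ext with C pseudo} without the surjection'', and your third paragraph correctly isolates why.

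There is, however, a genuine gap in your Fontaine--Laffaille verification. You argue that ``$E^{\sigma}$ sits between $R[G_{\Sigma}]$ and $E^{D}$, and $E^{D}=E^{\FL}_{\D}\otimes R$ is Fontaine--Laffaille at $p$'', and try to deduce that $\delta(f)$ is FL from this. But the canonical map goes $E^{\sigma}\twoheadrightarrow E^{D}$, so even granting the identification of $E^{D}$ with $E^{\FL}_{\D}\otimes R$ (which is not obvious), knowing $E^{D}$ is FL says nothing about $E^{\sigma}$; closure under subquotients runs the wrong way here. The correct and simpler argument, which is what the paper uses, is that $E^{\sigma}=R[G_{\Sigma}]/\ker\sigma$ is Fontaine--Laffaille at $p$ \emph{directly} because $\sigma$ is: $E^{\sigma}$ embeds as a $G_{\Sigma}$-submodule of $M_n(R)\cong (R^n)^{\oplus n}$, and $R^n$ with the $\sigma$-action is FL by the deformation condition. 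Equivalently, apply Theorem~\ref{N has C} with $N=R^n$, the faithful Cayley--Hamilton $G_{\Sigma}$-module underlying $\sigma$. You actually invoke exactly this application of Theorem~\ref{N has C} in your first paragraph, so the ingredient is present; it just needs to be deployed at the FL-check step rather than the $E^{D}$ detour. Once $E^{\sigma}$ is FL, so is $E^{\sigma}\otimes_R R/J$ as a quotient, hence $M_j$, hence $\delta(f)$, exactly as in the proof of Proposition~\ref{Ext with C pseudo}.

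The entire first paragraph comparing $A^{\sigma}_{i,j}$ with $\A^{\FL}_{i,j}\otimes_{\Rpseudo} R$ is unnecessary for this proposition and can be dropped.
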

\begin{proof}
This follows a similar argument as in Proposition \ref{Ext with C pseudo} without showing \eqref{ext eq for S} is a surjection.
\end{proof}

\subsubsection{\textsf{Principality the Reducibility Ideal}}
In this section, we use Assumptions \ref{main assumptions} (1)(2)(3) to show principality of the reducibility ideal.

\begin{prop}
For $\star \in \{\sigma,D\}$, using Assumption \ref{main assumptions} (1), we have 
$$
A_{1,3}^{\star} = A_{1,2}^{\star} A_{2,3}^{\star}, A_{3,1}^{\star} = A_{3,2}^{\star} A_{2,1}^{\star}.
$$
and
$$
A_{1,3}^{\star} A_{3,1}^{\star} \subseteq A_{1,2}^{\star} A_{2,1}^{\star}.
$$
And it follows from the existence of the anti-involution that
$$
I^{\tot} = A_{1,2}^{D} A_{2,1}^{D}.
$$
\end{prop}
\begin{proof}
In Proposition \ref{Ext with C}, we can take $J = \mathfrak{m}$, and then we get
$$
    \Hom_R (A_{i,j}^{\sigma}/A_{i,j}^{'},k) \hookrightarrow H^1_{\Sigma}(F,\Hom(\rho_j, \rho_i)).
$$
Since by Assumption \ref{main assumptions} (1), $H^1_{\Sigma}(F,\Hom(\rho_3, \rho_1)) = 0$, we get $A_{1,3}^{\sigma} = A_{1,2}^{\sigma} A_{2,3}^{\sigma}$ by complete version of Nakayama's lemma. Let $E_i^{\star} \in e_i^{\star} E^{\star} e_i^{\star}$ be the unique element such that $\psi_i^{\star}(e_i^{\star} E^{\star} e_i^{\star})$ is the matrix unit of $M_{d_{i}}(R)$ with unique nonzero coefficient 1 at row 1 and column 1. Then $\phi (A_{1,2}^{\sigma} A_{2,3}^{\sigma}) = \phi (A_{1,2}^{\sigma}) \phi(A_{2,3}^{\sigma})$ by identifying $A_{i,j}^{\sigma} = E_i^{\sigma} E^{\sigma} E^{\sigma}_j$ and $\phi (E_i^{\sigma}) = E_i^{D}$ (cf. \cite[Lemma 2.5]{berger2013deformation}). And it follows from Lemma \ref{GMA}(2) that $A_{1,3}^{D} = A_{1,2}^{D} A_{2,3}^{D}$.

Then as $A_{2,3}^{\star} A_{3,1}^{\star} \subseteq A_{2,1}^{\star}$, 
$$
A_{1,3}^{\star} A_{3,1}^{\star} = A_{1,2}^{\star} A_{2,3}^{\star} A_{3,1}^{\star} \subseteq A_{1,2}^{\star} A_{2,1}^{\star}.
$$
Also, we have $A_{1,2}^{D} A_{2,1}^{D} = A_{2,3}^{D} A_{3,2}^{D}$ as a result of the existence of anti-involution (cf. Lemma 1.8.5 (ii) in \cite{bellaiche2009families}). Thus by Proposition \ref{red ideal structure},
\begin{align*}
    I^{\tot} &= A_{1,3}^{D} A_{3,1}^{D} + A_{1,2}^{D} A_{2,1}^{D} + A_{2,3}^{D} A_{3,2}^{D}\\
    &= A_{1,2}^{D} A_{2,1}^{D}.
\end{align*}
\end{proof}

\begin{prop}\label{principality}
For $\star \in \{\sigma,D\}$, we have $A_{1,2}^{\star}, A_{2,1}^{\star}$ are free cyclic $R$-modules and $I^{\tot}$ is principal.
\end{prop}
\begin{proof}
Note $A_{2,1}^{'} = A_{2,3}^{\sigma} A_{3,1}^{\sigma} = A_{2,3}^{\sigma} A_{3,2}^{\sigma} A_{2,1}^{\sigma} \subset \mathfrak{m} A_{2,1}^{\sigma}$. Similarly,  $A_{1,2}^{'} = A_{1,3}^{\sigma} A_{3,2}^{\sigma} = A_{1,2}^{\sigma} A_{2,3}^{\sigma} A_{3,2}^{\sigma} \subset \mathfrak{m} A_{1,2}^{\sigma}$. Thus by Assumptions (2)(3) $\dim_k H^{1}_{\Sigma} (F, \Hom(\rho_1, \rho_2)) = \dim_k H^{1}_{\Sigma} (F, \Hom(\rho_2, \rho_1)) = 1$ and Proposition \ref{Ext with C}, we get 
$$
\Hom(A_{2,1}^{\sigma}/\mathfrak{m}A_{2,1}^{\sigma},k) \hookrightarrow \Hom(A_{2,1}^{\sigma}/A_{2,1}^{'},k) \hookrightarrow k,
$$
and 
$$
\Hom(A_{1,2}^{\sigma}/\mathfrak{m}A_{2,1}^{\sigma},k) \hookrightarrow \Hom(A_{1,2}^{\sigma}/A_{1,2}^{'},k) \hookrightarrow k.
$$
Then $A_{2,1}^{\sigma}/\mathfrak{m}A_{2,1}^{\sigma}, A_{1,2}^{\sigma}/\mathfrak{m}A_{1,2}^{\sigma}$ is generated by one element by the complete version Nakayama's lemma as it is non-trivial. Applying $\phi$ to $A_{2,1}^{\sigma}, A_{1,2}^{\sigma}$, we get $A_{2,1}^{D}, A_{1,2}^{D}$ are principal ideal of $R$ and $I^{\tot}$ is principal.
\end{proof}

\subsubsection{\textsf{$\ker D = \ker \sigma$}} Recall the definition of $\Tilde{\rho_i}$ in Assumption \ref{main assumptions} (4). In this part, we show that under Assumptions \ref{main assumptions} (2)(3) and $\# H^1_{\Sigma}(F,\Hom(\Tilde{\rho_3},\Tilde{\rho_2}) \otimes_{E} E/\mO) \le \# \mO / \varpi^r \mO$ for some $r$, $\ker D = \ker \sigma$. The same bound can be imposed on $\# H^1_{\Sigma}(F,\Hom(\Tilde{\rho_2},\Tilde{\rho_1}) \otimes_{E} E/\mO)$ due to the existence of the anti-involution $\tau$. This part can be viewed independently as the arguments in the rest of the section can proceed without it.

\begin{lem}
Using Assumptions \ref{main assumptions} (2)(3), if $R$ is infinite and $\# H^1_{\Sigma}(F,\Hom(\Tilde{\rho_3},\Tilde{\rho_2}) \otimes_{E} E/\mO) \le \# \mO / \varpi^r \mO$ for some $r$, then we get $\ker D = \ker \sigma$ and $A_{i,j}^{\sigma}$ are ideals of $R$.
\end{lem}
\begin{proof}
By a similar argument as in Proposition 3.1 in \cite{berger2013deformation}, which is not specific to a particular number of Jordan-H\"older factors, to show $\ker D = \ker \sigma$, we need to show that $R$ is reduced, infinite and $\# R/I^{\tot}$ is finite. $R$ is reduced by definition. $R$ is infinite by assumption (and can be verified if $\overline{\sigma}$ admits a deformation to $\GL_n(\mO)$ that is Fontaine--Laffaille and self-dual). It suffices to show that $\# R/I^{\tot}$ is finite, which follows from a similar argument as in Corollary \ref{No deformations to Z/Z_p^2} using $\# H^1_{\Sigma}(F,\Hom(\Tilde{\rho_3},\Tilde{\rho_2}) \otimes_{E} E/\mO) \le \# \mO / \varpi^r \mO$.

Then it follows that $A_{i,j}^{\sigma}$ are also fractional ideals of $R$ from Lemma \ref{GMA}, as they are also submodules of $R$, $A_{i,j}^{\sigma}$ are ideals of $R$.
\end{proof}

\section{Reducible Deformations and $R/I^{\tot}$}{\label{R is DVR theorem section}}
This section studies the reducible deformations of $\overline{\sigma}$ as in Section \ref{deformation problem}. The goal is to show that 
\begin{enumerate}
    \item $\dim_{k} H^1_{\Sigma}(F, \Hom(\rho_2,\rho_1))$ and $\dim_{k} H^1_{\Sigma}(F, \Hom(\rho_3,\rho_2))$ 
    control the equal characteristic reducible upper-triangular deformations;
    \item $H^1_{\Sigma}(F,\Hom(\Tilde{\rho_2},\Tilde{\rho_1}) \otimes E/\mO)$, $H^1_{\Sigma}(F,\Hom(\Tilde{\rho_3},\Tilde{\rho_2}) \otimes E/\mO)$ 
    control the size of upper triangular deformations to $\GL_{n}(\mO/\varpi^{s} \mO)$, where $\Tilde{\rho_i}$ is defined in Assumption \ref{main assumptions} (4), for $s \ge 1$,
\end{enumerate}
and we can control the size of $R/I^{\tot}$ by the above Selmer groups.

\begin{defn}
A deformation $\sigma$ is \textbf{upper-triangular} if some element in its strict equivalence class is of the form
$$
\sigma(g) = 
\begin{pmatrix}
    A_1(g) & * & * \\
    0 & A_2(g) & * \\
    0 & 0 & A_3(g)
\end{pmatrix}
\text{ for every } g \in \G,
$$
where $A_i(g)$ are $n_i \times n_i$ matrices, for $i = 1,2,3$.
\end{defn}

\begin{prop}{\label{No deformations to f[x]/x^2}}
    There is no non-trivial upper-triangular deformation of $\overline{\sigma}$ to $k[X]/X^2$ that is Fontaine--Laffaille at $p$.
\end{prop}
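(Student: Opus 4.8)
The plan is to reduce the statement to a cohomology vanishing, exploiting the non-splitness of the two $2\times 2$ extensions together with Assumptions \ref{main assumptions}(1), (2), (4). Let $\sigma\colon\G\to\GL_n(k[X]/X^2)$ be an upper-triangular, Fontaine-Laffaille-at-$p$ deformation of $\overline{\sigma}$, and fix its block-upper-triangular representative, with diagonal blocks $A_1,A_2,A_3$. Since the essential image of $\mathbf{G}$ is closed under subquotients, each $A_i$ is a Fontaine-Laffaille deformation of $\rho_i$ to $k[X]/X^2$, so by Assumption \ref{main assumptions}(4) (so $R_{i,\FL}=\mO$, and $\rho_i$ has no non-trivial first-order Fontaine-Laffaille deformation) it is strictly equivalent to the constant deformation $\rho_i\otimes_k k[X]/X^2$. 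Conjugating $\sigma$ by the corresponding element of $(1+XM_{n_1}(k))\times(1+XM_{n_2}(k))\times(1+XM_{n_3}(k))$ — which preserves the block-upper-triangular shape — I may assume $A_i=\rho_i\otimes_k k[X]/X^2$ for all $i$. Writing $\sigma(g)=\overline{\sigma}(g)+X\beta(g)$, the cochain $\beta$ is then a cocycle valued in the strictly block-upper-triangular subspace $\mathfrak{n}\subset\ad\overline{\sigma}$, it is Fontaine-Laffaille (equivalently, $\Sigma$-Selmer), and proving the proposition amounts to showing that the class of $\beta$ dies in $H^1_\Sigma(F,\ad\overline{\sigma})$.

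Next I would filter $\mathfrak{n}$ by the short exact sequence of $\G$-modules
$$0\to\Hom(\rho_3,\rho_1)\to\mathfrak{n}\to\Hom(\rho_2,\rho_1)\oplus\Hom(\rho_3,\rho_2)\to 0,$$
in which $\Hom(\rho_3,\rho_1)$ is the $(1,3)$-block — all cross terms of the adjoint action land there — and the quotient records the $(1,2)$- and $(2,3)$-blocks with their natural actions. Passing to the long exact sequence in $H^1_\Sigma$, legitimate by the Fontaine-Laffaille bookkeeping carried out in the proof of Lemma \ref{Selmer exact sequence} (the relevant modules lie in the essential image of $\mathbf{G}$ locally at $p$), and invoking Assumption \ref{main assumptions}(1) that $H^1_\Sigma(F,\Hom(\rho_3,\rho_1))=0$, one gets an injection
$$H^1_\Sigma(F,\mathfrak{n})\hookrightarrow H^1_\Sigma(F,\Hom(\rho_2,\rho_1))\oplus H^1_\Sigma(F,\Hom(\rho_3,\rho_2))$$
carrying the class of $\beta$ to the pair $([\beta_{12}],[\beta_{23}])$ of the classes of its two corner cocycles. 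Hence it suffices to arrange $[\beta_{12}]=0$ and $[\beta_{23}]=0$.

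To kill $[\beta_{12}]$, I would use that $\begin{pmatrix}\rho_1&a\\0&\rho_2\end{pmatrix}$, being a subquotient of $\overline{\sigma}$, is Fontaine-Laffaille, so its extension class is a \emph{non-zero} element of $H^1_\Sigma(F,\Hom(\rho_2,\rho_1))$, which is $1$-dimensional by Assumption \ref{main assumptions}(2); therefore $[\beta_{12}]$ is a scalar multiple of it. Conjugating $\sigma$ by $1+X\,\mathrm{diag}(\lambda I_{n_1},0,0)$ is a strict equivalence that preserves the normalization of the diagonal blocks and changes $\beta$ by the adjoint coboundary of $\mathrm{diag}(\lambda I_{n_1},0,0)$; a short computation shows this coboundary has $(2,3)$-block zero and $(1,2)$-block equal to a non-zero multiple of the extension cocycle of $\begin{pmatrix}\rho_1&a\\0&\rho_2\end{pmatrix}$, so a suitable $\lambda$ makes $[\beta_{12}]=0$ without disturbing $[\beta_{23}]$. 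Symmetrically, conjugating by $1+X\,\mathrm{diag}(0,0,\mu I_{n_3})$ — whose adjoint coboundary has $(1,2)$-block zero and $(2,3)$-block a non-zero multiple of the (non-split, hence non-zero) extension cocycle of $\begin{pmatrix}\rho_2&c\\0&\rho_3\end{pmatrix}$ — and using Assumption \ref{main assumptions}(2) once more, I would make $[\beta_{23}]=0$. By the injection above $[\beta]=0$ in $H^1_\Sigma(F,\mathfrak{n})$, so $\beta=d\xi$ with $\xi\in\mathfrak{n}$; a final conjugation by $1+X\xi$ puts $\sigma$ into the constant deformation $\overline{\sigma}\otimes_k k[X]/X^2$, so $\sigma$ is trivial.

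The main obstacle — the reason all of Assumptions \ref{main assumptions}(1), (2), (4) and the non-splitness of the $2\times 2$ blocks enter — is precisely the middle step: $H^1_\Sigma(F,\mathfrak{n})$ need not vanish (it can be $2$-dimensional), so the conclusion is not a bare cohomology computation. The point that saves it is that the \emph{non-central} block-scalar matrices $\mathrm{diag}(\lambda I_{n_1},0,0)$ and $\mathrm{diag}(0,0,\mu I_{n_3})$ have non-zero adjoint coboundaries, and those coboundaries move $[\beta_{12}]$ and $[\beta_{23}]$ through arbitrary multiples of the non-zero extension classes — so even a cohomologically non-trivial block-upper-triangular Fontaine-Laffaille cocycle represents the trivial deformation. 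I expect the only delicate points to be checking that these auxiliary conjugations preserve the normalization $A_i=\rho_i\otimes_k k[X]/X^2$ (they do, being block-diagonal with scalar blocks) and justifying the long exact sequence in Fontaine-Laffaille Selmer groups (as in Lemma \ref{Selmer exact sequence}).
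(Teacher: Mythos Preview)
Your approach is correct and follows essentially the same route as the paper: trivialize the diagonals via Assumption~\ref{main assumptions}(4), then use non-splitness together with Assumption~\ref{main assumptions}(2) to absorb the $(1,2)$- and $(2,3)$-deformations by block-scalar conjugations, and finally invoke Assumption~\ref{main assumptions}(1) to kill the residual $(1,3)$-piece. The paper carries this out by explicit matrix manipulations --- first conjugating away the $X$-part of $c$, then writing $\sigma'$ in a doubled basis to extract a Fontaine--Laffaille subquotient $\begin{pmatrix}\rho_2&0\\a''&\rho_1\end{pmatrix}$ (so $a''$ is a scalar multiple of $a$), conjugating that away, and finally extracting $\begin{pmatrix}\rho_3&0\\b'''&\rho_1\end{pmatrix}$ --- whereas you package the same moves via the filtration $0\to\Hom(\rho_3,\rho_1)\to\mathfrak{n}\to\Hom(\rho_2,\rho_1)\oplus\Hom(\rho_3,\rho_2)\to 0$ and the associated Selmer sequence; your coboundary-of-a-block-scalar trick is exactly the paper's conjugation step. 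One small caution: the claim that the class of $\beta$ lies in $H^1_\Sigma(F,\mathfrak{n})$, and that the Selmer long exact sequence for $\mathfrak{n}$ is available as in Lemma~\ref{Selmer exact sequence}, needs some care since the pieces $\Hom(\rho_j,\rho_i)$ need not themselves sit in the Fontaine--Laffaille window (the local condition is really defined via $\Ext^1_{\mathcal{MF}_{\mO}}(D_j,D_i)$ rather than via $\mathbf{G}$ applied to the Hom); the paper sidesteps this entirely by reading each relevant class off an explicit Fontaine--Laffaille subquotient of $\sigma'$, and your argument can be completed by the same device.
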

\begin{proof}
The argument of Proposition 7.3 in \cite{berger2020deformations} could apply after generalizing matrix algebra dimensions and replacing the assumptions on the Bloch--Kato Selmer groups $H^1_f$ by assumptions on the relaxed Selmer groups $H^1_{\Sigma}$.

First, note that because of the assumption that there is only one equivalence class of deformation of $\rho_i$ that is Fontaine-Laffaile at $p$ (Assumption \ref{main assumptions} (4)), 
if $\sigma': \G \to \GL_n(k[X]/X^2)$ is such a deformation, then the diagonal entries of $\sigma'$ do not deform and $\sigma'$ is of the form:
\begin{equation*}
    \sigma' = 
    \begin{pmatrix}
        \rho_1 & a + X a' &  b+ X b' \\
        0 & \rho_2 & c + X c' \\
        0 & 0 & \rho_3 \\
    \end{pmatrix}.
\end{equation*}
As 
\begin{equation*}
    \begin{pmatrix}
        \rho_2 & c + X c'\\
        0 & \rho_3\\
    \end{pmatrix}
\end{equation*}
is a quotient of $\sigma'$, it is Fontaine--Laffaille at $p$. Thus arguing as in Proposition 7.2 in \cite{berger2013deformation} using $\dim_{k} H^1_{\Sigma}(F, \Hom(\rho_3,\rho_2)) = 1$ (Assumption \ref{main assumptions} (2)), we can conjugate
$
\begin{pmatrix}
    \rho_2 & c + X c'\\
    0 & \rho_3\\
\end{pmatrix}
$
to
$
\begin{pmatrix}
    \rho_2 & c\\
    0 & \rho_3\\
\end{pmatrix}
$
by some $M_1 \in I_{r} + X M_{r \times r}(k[X]/X^2)$, where $r = n_2 + n_3$ and $I_{r}$ is the identity matrix of dimension $r \times r$.

Then
\begin{equation}{\label{eq3}}
\sigma' = 
\begin{pmatrix}
    \rho_1 & a + X a' &  b+ X b' \\
    0 & \rho_2 & c + X c' \\
    0 & 0 & \rho_3 \\
\end{pmatrix}
\cong
\begin{pmatrix}
    \rho_1 & a + X a'' &  b+ X b'' \\
    0 & \rho_2 & c \\
    0 & 0 & \rho_3 \\
\end{pmatrix},
\end{equation}
if we conjugate $\sigma'$ by
$
\begin{pmatrix}
    I_{n_1 \times n_1} & 0\\
    0 & M_1
\end{pmatrix}
$. Note the image of $\sigma'$ in $\GL_{n}(k)$ remains the same after conjugation as $M_1$'s image in $\GL_{n}(k)$ is the identity.


Similarly, we use $\dim_{k} H^1_{\Sigma}(F, \Hom(\rho_2,\rho_1)) = 1$ and $H^1_{\Sigma}(F, \Hom(\rho_3,\rho_1)) = 0$ to show $a'' = 0$ and $b'' = 0$ respectively. 
With respect to the basis
\begin{equation}{\label{basis}}
    \Bigg\{
\begin{pmatrix}
    1 \\
    0 \\
    \vdots \\
    0 \\
    0 
\end{pmatrix}
,
\begin{pmatrix}
    0 \\
    1 \\
    \vdots \\
    0 \\
    0 
\end{pmatrix}
,
...
,
\begin{pmatrix}
    0 \\
    0 \\
    \vdots \\
    0 \\
    1 
\end{pmatrix}
,
\begin{pmatrix}
    X \\
    0 \\
    \vdots \\
    0 \\
    0 
\end{pmatrix}
,
\begin{pmatrix}
    0 \\
    X\\
    \vdots \\
    0 \\
    0 
\end{pmatrix}
,
...
,
\begin{pmatrix}
    0 \\
    0 \\
    \vdots \\
    0 \\
    X
\end{pmatrix}
\Bigg \},
\end{equation}
$\sigma'$ has the form
\begin{equation}\label{eq2}
    \begin{pmatrix} 
    \rho_1 & a & b & 0 & 0 & 0\\
    0 & \rho_2 & c & 0 & 0 & 0 \\
    0 & 0 & \rho_3 & 0 & 0 & 0 \\
    0 & a'' & b'' & \rho_1 & a & b \\
    0 & 0 & 0 & 0 & \rho_2 & c \\
    0 & 0 & 0 & 0 & 0 & \rho_3 \\
\end{pmatrix}.
\end{equation}
As a quotient of a sub-representation of \eqref{eq2},
$$
\begin{pmatrix}
    \rho_2 & c & 0 \\
    0 & \rho_3 & 0 \\
    a'' & b'' & \rho_1
\end{pmatrix}
\cong
\begin{pmatrix}
    \rho_2 & 0 & c \\
    a'' & \rho_1 & b'' \\
    0 & 0 & \rho_3
\end{pmatrix}
$$
is Fontaine--Laffaille at $p$. Thus as its submodule
$$
\begin{pmatrix}
    \rho_2 & 0 \\
    a'' & \rho_1
\end{pmatrix}
$$
gives rise to a non-zero element in $H^1_{\Sigma}(F, \Hom(\rho_2,\rho_1))$, which has dimension 1, and $a'' = \alpha a$ for some scalar $\alpha \in k^{\times}$.

Then \eqref{eq3} becomes
$$
\sigma'
\cong
\begin{pmatrix}
    \rho_1 & a + X \alpha a &  b + X b'' \\
    0 & \rho_2 & c\\
    0 & 0 & \rho_3 \\
\end{pmatrix}
\cong 
\begin{pmatrix}
    \rho_1 & a &  b + X b''' \\
    0 & \rho_2 & c\\
    0 & 0 & \rho_3 \\
\end{pmatrix},
$$
where the latter isomorphism follows by conjugation of $\sigma$ by some $
\begin{pmatrix}
    M_2 & 0\\
    0 & I_{n_3 \times n_3}
\end{pmatrix},
$
for some $M_2 \in I_{s \times s} + X M_{s \times s}(k[X]/X^2)$ and $s = n_1 + n_2$.

It remains to show that $b+ X b''' \in k$. With respect to the same basis \eqref{basis}, there is a submodule of a quotient of $\sigma'$ of the form
$$
\begin{pmatrix}
    \rho_3 & 0 \\
    b''' & \rho_1
\end{pmatrix},
$$
giving rise to an element in $H^1_{\Sigma}(F, \Hom(\rho_3,\rho_1)) = 0$. Thus we can assume $b''' =0$ by a similar argument to the one above, so 
$$
\sigma'
\cong 
\begin{pmatrix}
    \rho_1 & a &  b\\
    0 & \rho_2 & c\\
    0 & 0 & \rho_3 \\
\end{pmatrix}.
$$
as desired.
\end{proof}

\begin{prop}\label{upper triangular and I tot}
Suppose $J \subset R_{\FL}$ is an ideal such that $R_{\FL}/J$ is an object of $\hat{\mathcal{C}}_{W(k)}$. Then $I_{R_{\FL}}^{\tot} \subseteq J$ if and only $\sigma_{\FL} \otimes_{R_{\FL}} R_{\FL}/J$ is an upper-triangular deformation. 
\end{prop}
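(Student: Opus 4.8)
The plan is to read both directions of the equivalence off the GMA structure of $E^{\sigma_{\FL}}=R_{\FL}[G_{\Sigma}]/\ker\sigma_{\FL}$, using that $I^{\tot}_{R_{\FL}}$ is by construction the smallest ideal making $\det\circ\sigma_{\FL}$ reducible. First I would set up the GMA exactly as in Lemma~\ref{GMA}: $(E^{\sigma_{\FL}},\det\circ\sigma_{\FL})$ is a Cayley--Hamilton algebra with residually multiplicity-free pseudorepresentation, so Chenevier's structure theorem furnishes data $\{e_i,\psi_i\}$ with $\psi_i\otimes k=\rho_i$ and $E^{\sigma_{\FL}}\cong\bigoplus_{i,j}M_{n_i,n_j}(A^{\sigma}_{i,j})$, the $A^{\sigma}_{i,j}$ being ideals of $R_{\FL}$ with $A^{\sigma}_{i,j}A^{\sigma}_{j,k}\subseteq A^{\sigma}_{i,k}$. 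Since $\overline{\sigma}$ is presented in block upper-triangular form with pairwise non-isomorphic irreducible diagonal blocks, and its $(1,2)$ and $(2,3)$ corners are non-split, the subalgebra of $M_n(k)$ generated by $\overline{\sigma}(G_{\Sigma})$ is the full block upper-triangular subalgebra: it surjects onto $\prod_i M_{n_i}(k)$ by Jacobson density and contains every strictly-block-upper-triangular matrix (the $(1,2)$ and $(2,3)$ corners being full by non-splitness, the $(1,3)$ corner containing their product). In particular it contains the standard block-diagonal idempotents, which, since $\overline{\sigma}$ has $\rho_i$ on the diagonal, serve as GMA idempotents for the residual algebra. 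Lifting these to orthogonal idempotents of $E^{\sigma_{\FL}}$ and conjugating $\sigma_{\FL}$ by a matrix $\equiv I\pmod{\m_{R_{\FL}}}$ (a strict equivalence), I may and will assume the $e_i$ are the standard block-diagonal idempotents of $M_n(R_{\FL})$, so that $\sigma_{\FL}(g)$ has $(i,j)$-block with entries in $A^{\sigma}_{i,j}$.

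Next I would compute $I^{\tot}_{R_{\FL}}$ explicitly, running the arguments of Proposition~\ref{principality} and the preceding proposition verbatim over $R_{\FL}$ (also Noetherian local). Non-splitness of the $(1,2)$ and $(2,3)$ corners forces $A^{\sigma}_{1,2}$ and $A^{\sigma}_{2,3}$ to have nonzero image in $k$, hence $A^{\sigma}_{1,2}=A^{\sigma}_{2,3}=R_{\FL}$ by Nakayama; applying the Hom--Ext injection of Proposition~\ref{Ext with C} (whose proof applies over $R_{\FL}$) with $J=\m_{R_{\FL}}$ together with Assumption~\ref{main assumptions}(1) gives $A^{\sigma}_{1,3}=A^{\sigma}_{1,2}A^{\sigma}_{2,3}=R_{\FL}$. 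Combining this with the GMA formula $I^{\tot}_{R_{\FL}}=A^{\sigma}_{1,2}A^{\sigma}_{2,1}+A^{\sigma}_{1,3}A^{\sigma}_{3,1}+A^{\sigma}_{2,3}A^{\sigma}_{3,2}$ yields $I^{\tot}_{R_{\FL}}=A^{\sigma}_{2,1}+A^{\sigma}_{3,1}+A^{\sigma}_{3,2}$ (note that, unlike over the self-dual ring $R$, no further collapse to a single term is available, nor is it needed).

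Now the equivalence. For $(\Leftarrow)$, if $I^{\tot}_{R_{\FL}}\subseteq J$ then $A^{\sigma}_{2,1},A^{\sigma}_{3,1},A^{\sigma}_{3,2}\subseteq J$, so reducing the GMA modulo $J$ annihilates the three lower blocks; in the chosen basis $\sigma_{\FL}\otimes_{R_{\FL}}R_{\FL}/J$ is then literally block upper-triangular with $(i,i)$-block a deformation of $\rho_i$, i.e.\ an upper-triangular deformation (the chosen basis reduces to the given basis of $\overline{\sigma}$, so this agrees with the definition). For $(\Rightarrow)$, suppose $\sigma_{\FL}\otimes_{R_{\FL}}R_{\FL}/J$ is an upper-triangular deformation, so it is strictly equivalent to a representation whose $(i,i)$-block is a deformation $A_i$ of $\rho_i$ to $R_{\FL}/J$. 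Taking determinants of block upper-triangular matrices gives $(\det\circ\sigma_{\FL})\otimes R_{\FL}/J=\prod_{i=1}^{3}(\det\circ A_i)$ as pseudorepresentations of $G_{\Sigma}$ over $R_{\FL}/J$, hence $T_{(\det\circ\sigma_{\FL})}\otimes R_{\FL}/J=\sum_i T_{\det\circ A_i}$ with $T_{\det\circ A_i}\otimes k=\tr\rho_i$; this exhibits $(\det\circ\sigma_{\FL})\otimes R_{\FL}/J$ as reducible for the partition of $\{1,2,3\}$ into singletons, so $I^{\tot}_{R_{\FL}}\subseteq J$ by Definition~\ref{red def}.

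The step I expect to be the main obstacle is the GMA-idempotent normalization in the first paragraph: one must verify that the GMA structure on $E^{\sigma_{\FL}}$ can be chosen with the standard block-diagonal idempotents, so that ``block upper-triangular in the GMA basis'' genuinely matches ``upper-triangular deformation of $\overline{\sigma}$'' as defined (which pins down the residual representation in its given triangular form, Lemma~\ref{scalar centralizer} ensuring a scalar centralizer so that strict equivalence is conjugation by $I+\m_{R_{\FL}}\mathrm{M}_n$). Once that identification is secured, the rest is bookkeeping with the relations $A^{\star}_{i,j}A^{\star}_{j,k}\subseteq A^{\star}_{i,k}$, Nakayama, and the defining property of the total reducibility ideal.
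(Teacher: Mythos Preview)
Your argument is correct (modulo the labels $(\Leftarrow)$/$(\Rightarrow)$ being swapped relative to the stated biconditional), but it takes a different route from the paper's.

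The paper defers to Lemma~7.5 of \cite{berger2020deformations} together with Theorem~1.1 of \cite{brown2008residually}. The latter is precisely the black-box statement that a representation into $\GL_n$ of a complete local ring whose trace decomposes as a sum of pseudocharacters reducing to pairwise distinct absolutely irreducible representations is conjugate, by a matrix $\equiv I\pmod{\m}$, to a block upper-triangular one. With this in hand the forward direction is immediate: $I^{\tot}_{R_{\FL}}\subseteq J$ means the trace of $\sigma_{\FL}\otimes R_{\FL}/J$ splits, and Brown's theorem supplies the upper-triangular representative. The idempotent-normalization step you flag as the main obstacle is exactly what Brown's theorem packages up. Your approach instead computes $I^{\tot}_{R_{\FL}}=A^{\sigma}_{2,1}+A^{\sigma}_{3,1}+A^{\sigma}_{3,2}$ explicitly from the GMA, using non-splitness of the $(1,2)$ and $(2,3)$ corners to get $A^{\sigma}_{1,2}=A^{\sigma}_{2,3}=R_{\FL}$ and Assumption~\ref{main assumptions}(1) to get $A^{\sigma}_{1,3}=R_{\FL}$; once the idempotents are normalized, the three lower $A^{\sigma}$-ideals vanish modulo $J$ on the nose. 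The backward direction is essentially the same in both approaches. What your route buys is self-containment and a transparent identification of $I^{\tot}_{R_{\FL}}$ with the sum of the three ``lower'' GMA ideals; what the paper's citation buys is that the forward direction goes through without invoking Assumption~\ref{main assumptions}(1) or carrying out the idempotent-lifting argument by hand.
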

\begin{proof}
This can be proved similarly to Lemma 7.5 in \cite{berger2020deformations} using the definition of reducibility ideal and Theorem 1.1 of \cite{brown2008residually}.
\end{proof}

\begin{prop}{\label{no deformation to Z/ps}}
Suppose $\# H^1_{\Sigma}(F,\Hom(\Tilde{\rho_2},\Tilde{\rho_1}) \otimes_{E} E/\mO) \le \# \mO / \varpi^r \mO$ or 
\newline
$\# H^1_{\Sigma}(F,\Hom(\Tilde{\rho_3},\Tilde{\rho_2}) \otimes_{E} E/\mO) \le \# \mO / \varpi^r \mO$. Then there is no non-trivial upper-triangular deformation of $\overline{\sigma}$ to $\mO/\varpi^{s} \mO$ that is Fontaine--Laffaille at $p$ for $s > r$. And $\# R_{\FL}/I_{R_{\FL}}^{\tot} \le \# \mO/\varpi^{r} \mO$. 
\end{prop}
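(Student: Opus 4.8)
The plan is to first rule out upper-triangular Fontaine-Laffaille deformations of $\overline{\sigma}$ to $\mO/\varpi^{s}\mO$ for all $s>r$ by a cohomological size argument, and then to deduce the bound on $\#R_{\FL}/I^{\tot}_{R_{\FL}}$ from this together with Propositions~\ref{No deformations to f[x]/x^2} and~\ref{upper triangular and I tot}. By symmetry it is enough to treat the hypothesis $\#H^1_{\Sigma}(F,\Hom(\Tilde{\rho}_2,\Tilde{\rho}_1)\otimes_{E}E/\mO)\le\#\mO/\varpi^{r}\mO$; the case involving $\Hom(\Tilde{\rho}_3,\Tilde{\rho}_2)$ is identical after replacing the subrepresentation on the first two blocks below by the quotient representation on the last two. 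So suppose $\sigma\colon\G\to\GL_{n}(\mO/\varpi^{s}\mO)$ is an upper-triangular Fontaine-Laffaille deformation of $\overline{\sigma}$ with $s>r$. Each graded piece $A_i$ of the invariant flag is a subquotient of the Fontaine-Laffaille module $\sigma$, hence a Fontaine-Laffaille deformation of $\rho_i$ to $\mO/\varpi^{s}\mO$; by Assumption~\ref{main assumptions}(4) it equals $\Tilde{\rho}_i\bmod\varpi^{s}$ up to strict equivalence, so after conjugating I may take the diagonal blocks of $\sigma$ to be exactly $\Tilde{\rho}_i\bmod\varpi^{s}$. The subrepresentation $\sigma_{12}$ of $\sigma$ on the first $n_1+n_2$ coordinates is then a Fontaine-Laffaille extension of $\Tilde{\rho}_2\bmod\varpi^{s}$ by $\Tilde{\rho}_1\bmod\varpi^{s}$ that is residually non-split (by the standing hypotheses on $\overline{\sigma}$), and so it defines a class $\alpha\in H^1_{\Sigma}(F,\Hom(\Tilde{\rho}_2,\Tilde{\rho}_1)\otimes_{\mO}\mO/\varpi^{s}\mO)$ whose reduction modulo $\varpi$ in $H^1_{\Sigma}(F,\Hom(\rho_2,\rho_1))$ is non-zero.

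The heart of the argument is an order estimate for $\alpha$. Since $\rho_1\not\cong\rho_2$ the module $W:=\Hom(\Tilde{\rho}_2,\Tilde{\rho}_1)\otimes_{E}E/\mO$ has $W^{\G}=0$, so Lemma~\ref{torsion Selmer is Selmer torsion} identifies $H^1_{\Sigma}(F,\Hom(\Tilde{\rho}_2,\Tilde{\rho}_1)\otimes_{\mO}\mO/\varpi^{n}\mO)$ with $H^1_{\Sigma}(F,W)[\varpi^{n}]$ for all $n$, and under these identifications the map induced on Selmer groups by $\mO/\varpi^{s}\mO\twoheadrightarrow k$ becomes multiplication by $\varpi^{s-1}$ on $\varpi^{s}$-torsion. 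Hence $\varpi^{s-1}\alpha\ne0$ while $\varpi^{s}\alpha=0$, so $\mO\alpha\cong\mO/\varpi^{s}\mO$ sits inside $H^1_{\Sigma}(F,W)$, forcing $\#H^1_{\Sigma}(F,W)\ge\#\mO/\varpi^{s}\mO>\#\mO/\varpi^{r}\mO$ because $s>r$. This contradicts the hypothesis, so no upper-triangular Fontaine-Laffaille deformation of $\overline{\sigma}$ to $\mO/\varpi^{s}\mO$ exists once $s>r$; in particular there is no non-trivial one.

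For the bound on $\#R_{\FL}/I^{\tot}_{R_{\FL}}$, Proposition~\ref{upper triangular and I tot} identifies $R_{\FL}/I^{\tot}_{R_{\FL}}$ as the universal ring carrying an upper-triangular Fontaine-Laffaille deformation $\sigma^{\red}$ of $\overline{\sigma}$, and Proposition~\ref{No deformations to f[x]/x^2} (together with Assumption~\ref{main assumptions}(4), which pins down the $\mO$-algebra structure via the diagonal blocks) shows that $R_{\FL}/I^{\tot}_{R_{\FL}}$ is a quotient of $\mO$, hence isomorphic to $\mO$ or to $\mO/\varpi^{m}\mO$ for some integer $m\ge1$. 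If it were $\mO$, or $\mO/\varpi^{m}\mO$ with $m>r$, then $\sigma^{\red}\bmod\varpi^{r+1}$ would be an upper-triangular Fontaine-Laffaille deformation of $\overline{\sigma}$ to $\mO/\varpi^{r+1}\mO$, contradicting the previous paragraph with $s=r+1$; therefore $R_{\FL}/I^{\tot}_{R_{\FL}}\cong\mO/\varpi^{m}\mO$ with $m\le r$, and $\#R_{\FL}/I^{\tot}_{R_{\FL}}=\#\mO/\varpi^{m}\mO\le\#\mO/\varpi^{r}\mO$.

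The step I expect to require the most care is the reduction in the first paragraph: normalising the diagonal blocks to $\Tilde{\rho}_i\bmod\varpi^{s}$ (this is precisely where Assumption~\ref{main assumptions}(4) is indispensable), identifying the coefficient module of $\sigma_{12}$ with $\Hom_{\mO}(\Tilde{\rho}_2,\Tilde{\rho}_1)\otimes_{\mO}\mO/\varpi^{s}\mO$, and then checking that reduction of coefficients corresponds to multiplication by $\varpi^{s-1}$ under the identification in Lemma~\ref{torsion Selmer is Selmer torsion}, so that residual non-splitness makes $\alpha$ of maximal order $\varpi^{s}$. Given this, the size estimate and the passage to the cardinality bound are essentially formal.
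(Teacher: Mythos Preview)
Your argument is correct and follows essentially the same route as the paper: fix the diagonal blocks to $\Tilde{\rho}_i\bmod\varpi^{s}$ via Assumption~\ref{main assumptions}(4), use Lemma~\ref{torsion Selmer is Selmer torsion} to see that the residually non-split $(1,2)$-block gives a Selmer class of exact order $\varpi^{s}$, and derive a contradiction with the Selmer bound. For the bound on $\#R_{\FL}/I^{\tot}_{R_{\FL}}$ you are slightly more explicit than the paper: the paper simply assumes $\#R_{\FL}/I^{\tot}_{R_{\FL}}=\#\mO/\varpi^{s}\mO$ for some $s>r$ and invokes Proposition~\ref{upper triangular and I tot} directly, whereas you first use Proposition~\ref{No deformations to f[x]/x^2} to force $R_{\FL}/I^{\tot}_{R_{\FL}}$ to be a quotient of $\mO$, which is a cleaner way to justify that the only possible quotients are $\mO/\varpi^{m}$.
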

\begin{proof}
We assume that $\# H^1_{\Sigma}(F,\Hom(\Tilde{\rho_2},\Tilde{\rho_1}) \otimes_{E} E/\mO) \le \# \mO / \varpi^r \mO$. The case where $\# H^1_{\Sigma}(F,\Hom(\Tilde{\rho_3},\Tilde{\rho_2}) \otimes_{E} E/\mO) \le \# \mO / \varpi^r \mO$ is analogous due to the existence of the anti-involution. Suppose there exists such a deformation $\sigma': \G \to \GL_n(\mO/\varpi^{s} \mO)$. By Assumption \ref{main assumptions} (4), we know
\begin{equation*}
    \sigma' = 
\begin{pmatrix}
    \Tilde{\rho}_{1} \otimes (\mO/\varpi^{s} \mO) & *_1 & *_2\\
    0 & \Tilde{\rho}_{2}  \otimes (\mO/\varpi^{s} \mO)  & *_3\\
    0 & 0 & \Tilde{\rho}_{3} \otimes (\mO/\varpi^{s} \mO)
\end{pmatrix}.
\end{equation*}
Then $*_1$ gives rise to elements in $H^1_{\Sigma}(F,\Hom(\Tilde{\rho_2},\Tilde{\rho_1}) \otimes E/\mO[\varpi^{s}]) = H^1_{\Sigma}(F,\Hom(\Tilde{\rho_2},\Tilde{\rho_1}) \otimes E/\mO)[\varpi^{s}]$ by Lemma \ref{torsion Selmer is Selmer torsion}. As the reduction of $\sigma'$ contains $\begin{pmatrix}
    \rho_1 & *_1 \\
    & \rho_2
\end{pmatrix}$ which is non-split, 
so $*_1$ gives rise to a non-trivial element of $H^1_{\Sigma}(F,\Hom(\Tilde{\rho_2},\Tilde{\rho_1}) \otimes_{E} E/\mO)[\varpi]$, and the image of $*_1$ is not contained in $\varpi \mathcal{O}/\varpi^s \mathcal{O}$. This implies that $*_1$ generates a submodule of $H^1_{\Sigma}(F,\Hom(\Tilde{\rho_2},\Tilde{\rho_1}) \otimes_{E} E/\mO)$ of order $\ge \# \mO/\varpi^s \mO$, contradicting the assumption that $\# H^1_{\Sigma}(F,\Hom(\Tilde{\rho_2},\Tilde{\rho_1}) \otimes_{E} E/\mO) \le \# \mO / \varpi^r \mO$. Thus there are no non-trivial upper-triangular deformations of $\overline{\sigma}$ to $\mO/\varpi^{s} \mO$ that is Fontaine--Laffaille at $p$ for $s > r$.

Suppose $\# R_{\FL}/I_{R_{\FL}}^{\tot} = \# \mO/\varpi^{s} \mO$ for some $s > r$. Then by Proposition \ref{upper triangular and I tot}, there exists an upper-triangular deformation to $ \mO/\varpi^{s} \mO$ that is Fontaine Laffaille at $p$, but by above, no such deformation exists.
\end{proof}

\begin{cor}{\label{No deformations to Z/Z_p^2}}
If $\# H^1_{\Sigma}(F,\Hom(\Tilde{\rho_2},\Tilde{\rho_1}) \otimes_{E} E/\mO) \le \# \mO / \varpi^r \mO$ or 
\newline
If  $\# H^1_{\Sigma}(F,\Hom(\Tilde{\rho_3},\Tilde{\rho_2}) \otimes_{E} E/\mO) \le \# \mO / \varpi^r \mO$, then
$$
\# R/I^{\tot} \le \# \mO/\varpi^{r} \mO.
$$
\end{cor}
\begin{proof}
This follows from Proposition \ref{no deformation to Z/ps} above and Lemma 7.11 in \cite{berger2013deformation} as $R$ is a quotient of $R_{\FL}$.
\end{proof}


The following Theorem summarizes the results that allow us to identify when $R$ is a discrete valuation ring. For the convenience of the reader, we include all the assumptions made throughout the sections.

\begin{thm}{\label{main thoerem}}
Consider the following residual representation: $\overline{\sigma}: \G \to \GL_{n}(k)$,
\begin{equation}
\overline{\sigma} = 
\begin{pmatrix}
    \rho_1 & a & b \\ 
    0 & \rho_2 & c \\ 
    0 & 0 & \rho_3 \\ 
\end{pmatrix},
\end{equation}
where 
\begin{enumerate}
    \item $\overline{\sigma}$ is Fontaine--Laffaille at $p$;
    \item $\rho_i: G_{\Sigma} \to \GL_{n_{i}}(k)$ are absolutely irreducible for each $i$ and $\rho_i \not \cong \rho_j$ for $i \ne j$ and $R_{i,\FL} = \mO$ for all $1 \le i \le 3$;
    \item $\begin{pmatrix}
    \rho_1 & a \\
    0 & \rho_2
    \end{pmatrix}
    $
    and 
    $
    \begin{pmatrix}
        \rho_2 & c \\
        0 & \rho_3
    \end{pmatrix}
    $
    are non-split extensions;
    \item $\overline{\sigma}$ is $\tau$-self-dual for some anti-involution $\tau$.
    
    \end{enumerate}
    If both of the following conditions are satisfied:
    \begin{enumerate}
    \renewcommand{\labelenumi}{(\roman{enumi})}
        \item $H^{1}_{\Sigma}(F, \Hom(\rho_3,\rho_1)) = 0$;
        \item $\dim_{k} H^{1}_{\Sigma}(F, \Hom(\rho_1,\rho_2))= 1$;
        \item $\# H^1_{\Sigma}(F,\Hom(\Tilde{\rho_2},\Tilde{\rho_1}) \otimes_{E} E/\mO) \le \# \mO/\varpi \mO$,
    \end{enumerate}
    then the maximal ideal $\m$ of $R$ is principal. Furthermore, suppose $\overline{\sigma}$ admits a deformation to $\GL_n(\mO)$ that is Fontaine--Laffaille and self-dual, then $R \cong \mO$.

\end{thm}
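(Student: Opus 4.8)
The plan is to show that the total reducibility ideal $I^{\tot}$ of $R$ is principal and in fact equal to the maximal ideal $\m$, and then to combine the reducedness of $R$ with an $\mO$-valued lift to identify $R$ with $\mO$. First I would check that the hypotheses of the propositions already proved are in force: conditions (1)--(4) of the statement are the running setup of Section \ref{general residual set up}, hypothesis (1) is Assumption \ref{main assumptions}(1), hypothesis (2) is Assumption \ref{main assumptions}(3), and hypothesis (3) is exactly the bound required in Proposition \ref{no deformation to Z/ps} with $r=1$. Hence Proposition \ref{principality} gives that $I^{\tot}$ is a principal ideal of $R$, and Corollary \ref{No deformations to Z/Z_p^2} (applied with $r=1$) gives $\# R/I^{\tot} \le \# \mO/\varpi\mO = \# k$.

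Next I would pin down $I^{\tot}$. Since $\overline{\sigma} = \sigma \otimes_R k$ is block upper-triangular with absolutely irreducible diagonal blocks $\rho_1,\rho_2,\rho_3$, it is reducible, so by the defining property of the reducibility ideal (Definition \ref{red def}) one has $I^{\tot} \subseteq \m$. Therefore $R/I^{\tot}$ is a nonzero quotient of the local ring $R$ and surjects onto $R/\m = k$, so $\# R/I^{\tot} \ge \# k$; combined with the previous bound this forces $R/I^{\tot} = k$, i.e. $I^{\tot} = \m$. Since $I^{\tot}$ is principal, so is $\m$, which is the first assertion.

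For the second assertion, suppose $\overline{\sigma}$ admits a Fontaine-Laffaille, self-dual lift $\sigma_0 : \G \to \GL_n(\mO)$. By the universal property of $R$ this induces a local $W(k)$-algebra homomorphism $\pi : R \to \mO$; because the unique Fontaine-Laffaille deformations $\Tilde{\rho}_i$ of the $\rho_i$ (Assumption \ref{main assumptions}(4)) appear as the diagonal constituents of the GMA structure of Lemma \ref{GMA}, the ring $R$ is naturally an $\mO$-algebra and $\pi$ restricts to the identity on $\mO$, hence is surjective; in particular $R \ne k$, so $\m \ne 0$. Now $R$ is reduced (being the quotient of $R'$ by its nilradical), complete, Noetherian and local with principal nonzero maximal ideal $\m = (t)$; by Krull's intersection theorem $\bigcap_n \m^n = 0$, $t$ is not nilpotent, and every nonzero $x \in R$ equals $u t^{m}$ with $u$ a unit and $m = \max\{j : x \in \m^{j}\} < \infty$, so $R$ is a discrete valuation ring with uniformizer $t$. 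Finally $\ker\pi$ is a prime ideal of the DVR $R$, hence $(0)$ or $\m$; the latter would make $R/\ker\pi \cong \mO$ a field, which is absurd, so $\ker\pi = 0$ and $R \cong \mO$.

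The computational content is already packaged in Proposition \ref{principality} and Corollary \ref{No deformations to Z/Z_p^2}, and the passage from ``$\m$ principal'' to ``$R$ is a DVR'' is routine commutative algebra once $R \ne k$ is known. The one step that I expect to require genuine care is the surjectivity of $\pi$ --- equivalently, the assertion that an $\mO$-valued lift forces $R$ to be an honest $\mO$-algebra rather than merely a $W(k)$-algebra --- since this is precisely what upgrades the conclusion from ``$R$ is some discrete valuation ring finite over $W(k)$'' to ``$R \cong \mO$''.
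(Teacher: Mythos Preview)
Your proof is correct and follows the same path as the paper's: establish that $I^{\tot}$ is principal (Proposition \ref{principality}), show $I^{\tot}=\m$ via the bound on reducible deformations, and then use an $\mO$-valued lift to conclude $R\cong\mO$. The only cosmetic difference is that the paper additionally invokes Proposition \ref{No deformations to f[x]/x^2} and phrases the step as ruling out surjections $R/I^{\tot}\to k[X]/X^2$ and $R/I^{\tot}\to\mO/\varpi^2\mO$ in the style of \cite{calegari2006eisenstein}, whereas you pass directly from $\#R/I^{\tot}\le\#k$ to $I^{\tot}=\m$; your added care about why the map $R\to\mO$ is surjective (via the $\mO$-algebra structure furnished by $R_{i,\FL}=\mO$) is a point the paper leaves implicit.
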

\begin{proof}
This follows a similar argument to Lemma 3.4 and Lemma 3.5 in \cite{calegari2006eisenstein}. It follows from Proposition \ref{No deformations to f[x]/x^2} and Corollary \ref{No deformations to Z/Z_p^2} that there is no surjection $R/I^{\tot} \to k[X]/X^2$ or $R/I^{\tot} \to \mO/\varpi^2 \mO$, and as a result, $I^{\tot}$ is maximal. Proposition \ref{principality} now shows that the maximal ideal of $R$ is $(\varpi)$. Suppose $\overline{\sigma}$ admits a deformation to $\GL_n(\mO)$ that satisfies all deformation conditions, then there is a surjection $R \twoheadrightarrow \mO$ and $\varpi$ is not nilpotent. It follows that $R \cong \mO$.
\end{proof}

\begin{rmk}
The conditions on $\overline{\sigma}$, i.e., (1)-(3) are insufficient to guarantee the uniqueness of $\overline{\sigma}$. For example, $\begin{pmatrix}
    \rho_1 & a \\
    0 & \rho_2
    \end{pmatrix}
    $
and 
$
\begin{pmatrix}
    \rho_2 & c \\
    0 & \rho_3
\end{pmatrix}
$
might not be unique. However, with the additions of (i)(ii), $\overline{\sigma}$ is unique up to isomorphism (Proposition  \ref{Uniqueness of residual}).
\end{rmk}


\section{Lattice}

In this section, we discuss the existence of a lattice $\Lambda$ related to a deformation $\sigma'$ of $\sigma$ such that the three Jordan-H\"{o}lder factors in the associated upper-triangular reduction are ordered exactly as $\rho_1, \rho_2, \rho_3$ and $\begin{pmatrix}
    \rho_1 & a\\
    0 & \rho_2
\end{pmatrix},
\begin{pmatrix}
    \rho_2 & c\\
    0 & \rho_3
\end{pmatrix}$
are non-split extensions. This section can be viewed independently, without the running assumptions stated in Assumption \ref{main assumptions} or the conditions on $\overline{\sigma}$ in Section \ref{general residual set up}.

\begin{prop}{\label{Lattice}}
Let $\sigma': G_{\Sigma} \to \GL_{n}(E)$ be absolutely irreducible,  $\overline{\sigma^{'}}^{\sss} \cong \rho_1 \oplus \rho_2 \oplus \rho_3$ and is Fontaine--Laffaille at $p$, where $\rho_1, \rho_2, \rho_3$ are pairwise non-isomorphic and irreducible. Assume Assumption \ref{main assumptions} (1). Then there exists a $G_{\Sigma}$-stable lattice $\Lambda$ such that the associated reduction $\overline{\sigma'}_{\Lambda}$ satisfies
$$
\overline{\sigma'}_{\Lambda} \cong 
    \begin{pmatrix}
        \rho_1 & a & b \\
        0  & \rho_2 & c \\
        0  & 0 & \rho_3 \\
    \end{pmatrix}
$$
where $
\begin{pmatrix}
    \rho_1 & a\\
    0 & \rho_2
\end{pmatrix},
\begin{pmatrix}
    \rho_2 & c\\
    0 & \rho_3
\end{pmatrix}$
are non-split extensions.
\end{prop}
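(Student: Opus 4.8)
The plan is to reduce the statement to the existence of a single $G_\Sigma$-stable $\mO$-lattice $\Lambda$ whose reduction has socle isomorphic to $\rho_1$, and then to produce such a lattice from the absolute irreducibility of $\sigma'$ together with Assumption \ref{main assumptions}(1). The key auxiliary observation, which I would establish first, is that for \emph{any} $G_\Sigma$-stable lattice $\Lambda$ the reduction $\overline{\sigma'}_\Lambda = \Lambda/\varpi\Lambda$ is a quotient of a lattice in the Fontaine-Laffaille representation $\sigma'$, hence lies in the essential image of $\mathbf{G}$; therefore every subquotient of $\overline{\sigma'}_\Lambda$ is again Fontaine-Laffaille at $p$, and in particular any subquotient that is an extension of $\rho_3$ by $\rho_1$ defines a class in $H^1_\Sigma(F,\Hom(\rho_3,\rho_1)) = 0$, and so splits. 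Thus inside any such $\overline{\sigma'}_\Lambda$ the factor $\rho_3$ can never sit non-split immediately above $\rho_1$.

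Granting this, the reduction goes as follows. Suppose $\Lambda$ has been chosen with $\mathrm{soc}(\overline{\sigma'}_\Lambda) \cong \rho_1$. Then $\overline{\sigma'}_\Lambda/\rho_1$ has composition factors $\rho_2, \rho_3$, and it cannot contain $\rho_3$ as a submodule: the preimage of such a submodule in $\overline{\sigma'}_\Lambda$ would be a submodule that is a non-split extension of $\rho_3$ by $\rho_1$ (non-split because $\rho_3 \not\hookrightarrow \mathrm{soc}(\overline{\sigma'}_\Lambda)$), contradicting the previous paragraph. Hence $\mathrm{soc}(\overline{\sigma'}_\Lambda/\rho_1) = \rho_2$, so $\overline{\sigma'}_\Lambda/\rho_1 \cong \left(\begin{smallmatrix}\rho_2 & c \\ 0 & \rho_3\end{smallmatrix}\right)$ is a non-split extension. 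Letting $U \subseteq \overline{\sigma'}_\Lambda$ be the preimage of the submodule $\rho_2$, one has $\mathrm{soc}(U) \subseteq \mathrm{soc}(\overline{\sigma'}_\Lambda) = \rho_1$, so $U \cong \left(\begin{smallmatrix}\rho_1 & a \\ 0 & \rho_2\end{smallmatrix}\right)$ is non-split, and a basis adapted to the flag $\rho_1 \subset U \subset \overline{\sigma'}_\Lambda$ exhibits $\overline{\sigma'}_\Lambda$ in the asserted upper-triangular form with both $2 \times 2$ blocks non-split.

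It then remains to construct a lattice with socle exactly $\rho_1$. Since $\sigma'$ is absolutely irreducible and $\overline{\sigma'}^{\sss}$ is multiplicity free, the standard lattice theory for residually multiplicity-free representations (cf.\ \cite{bellaiche2009families}, and the proof of Ribet's lemma) --- via the ``rotation'' operation sending a lattice $\Lambda$ to the preimage in $\Lambda$ of a submodule of $\overline{\Lambda}$ --- provides a lattice with $\rho_1$ occurring in $\mathrm{soc}(\overline{\sigma'}_\Lambda)$. If the socle is strictly larger it also contains $\rho_2$ or $\rho_3$; I would then strip off the offending factor by passing to the appropriate sublattice, re-examine the socle of the new reduction using the rigidity from the first paragraph (no non-split $\rho_3$-over-$\rho_1$ subquotient is permitted), and induct on a suitable length invariant --- such as the length of the part of $\overline{\sigma'}_\Lambda$ lying below a fixed copy of $\rho_1$, or $\dim_k \mathrm{soc}(\overline{\sigma'}_\Lambda)$ --- to arrive at $\mathrm{soc}(\overline{\sigma'}_\Lambda) = \rho_1$. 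Together with the previous paragraph this proves the proposition.

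The hard part is this last construction. Bella\"iche's generalized Ribet's lemma on its own only furnishes a non-split extension crossing a prescribed partition of $\{\rho_1, \rho_2, \rho_3\}$: it neither fixes the order of the three factors nor rules out a socle of the form $\rho_1 \oplus \rho_3$ (compare Remark \ref{Ribets lemma}). It is exactly the hypothesis $H^1_\Sigma(F,\Hom(\rho_3,\rho_1)) = 0$, together with the stability of the Fontaine-Laffaille condition under passing to subquotients, that forbids $\rho_3$ from lying immediately above $\rho_1$ anywhere in $\overline{\sigma'}_\Lambda$; this is the structural input that forces the socle down to $\rho_1$ and thereby realizes the uniserial shape $\rho_1 / \rho_2 / \rho_3$ with both successive extensions non-split. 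As remarked in the introduction, the same argument works verbatim with ``Fontaine-Laffaille at $p$'' replaced by any local condition at $p$ that is preserved under subquotients.
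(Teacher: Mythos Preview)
Your first two paragraphs are correct and give a clean reformulation: a lattice with $\mathrm{soc}(\overline{\sigma'}_\Lambda)=\rho_1$ is precisely a lattice whose reduction is uniserial of shape $\rho_1\,/\,\rho_2\,/\,\rho_3$, and the Fontaine--Laffaille observation you isolate (any reduction lies in the essential image of $\mathbf{G}$, so no subquotient can be a non-split extension of $\rho_3$ by $\rho_1$) is exactly the structural constraint that forces this. That part is fine and is indeed the conceptual core.

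The gap is in your third paragraph. You reduce everything to producing a lattice with socle exactly $\rho_1$, acknowledge this is ``the hard part,'' and then only sketch a rotation-and-induct procedure without specifying the rotation, the invariant, or why it terminates. Concretely: starting from a lattice with $\rho_1\subset\mathrm{soc}$ but socle strictly larger, the natural rotations can land you back in a configuration of the same type (e.g.\ $\rho_1$ again a direct summand, or socle again $\rho_1\oplus\rho_3$), and nothing you have written rules out cycling among such lattices. The constraint ``no non-split $\rho_3$ over $\rho_1$'' controls what sits \emph{above} $\rho_1$, but does not by itself prevent $\rho_2$ or $\rho_3$ from appearing \emph{alongside} $\rho_1$ in the socle after a rotation. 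So as written the argument is incomplete.

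The paper does not attempt a socle argument at all. It instead quotes two external lattice results: Theorem~6.1 of \cite{brown2011cuspidality} furnishes a lattice whose reduction already has the ordered block-upper-triangular form
\[
\begin{pmatrix}\rho_1 & 0 & b\\ 0 & \rho_2 & c\\ 0 & 0 & \rho_3\end{pmatrix}\not\cong\rho_1\oplus\rho_2\oplus\rho_3,
\]
and then Assumption~\ref{main assumptions}(1) forces $c$ non-split. Next, Theorem~4.1 of \cite{berger2020deformations} (a two-block Ribet-type lemma applied to $\rho_1$ versus the indecomposable $\left(\begin{smallmatrix}\rho_2 & c\\ 0 & \rho_3\end{smallmatrix}\right)$) produces a lattice whose reduction is not $\rho_1\oplus\left(\begin{smallmatrix}\rho_2 & c\\ 0 & \rho_3\end{smallmatrix}\right)$; a short conjugation argument together with Assumption~\ref{main assumptions}(1) then shows the $(1,2)$-block $a$ must be non-split as well. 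So the paper outsources the lattice construction to these cited results and uses the Selmer vanishing only to verify non-splitness, whereas you try to build the lattice from scratch. Your route could be made to work, but you would need either a genuine termination argument for the rotation, or to invoke a known statement guaranteeing lattices with prescribed simple socle in the residually multiplicity-free case.
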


\begin{proof}
We follow a similar proof as the one in \cite[Corollary 4.3]{berger2020deformations}. Theorem 6.1 in \cite{brown2011cuspidality} implies that there is a lattice $\Lambda$ such that the associated reduction $\overline{\sigma'}_{\Lambda}$ satisfies
\begin{equation}{\label{eq 5.2}}
    \overline{\sigma'}_{\Lambda} = 
    \begin{pmatrix}
        \rho_1 & 0 & b \\
        0  & \rho_2 & c \\
        0  & 0 & \rho_3 \\
    \end{pmatrix}
    \not \cong \rho_1 \oplus \rho_2 \oplus \rho_3.
\end{equation}

First, we will show that $c$ gives rise to a non-split extension of $\rho_3$ by $\rho_2$. Indeed, if $c$ gives a split extension of $\rho_3$ by $\rho_2$, then
$$
\overline{\sigma'}_{\Lambda} \cong
\begin{pmatrix}
        \rho_1 & 0 & b \\
        0  & \rho_2 & 0 \\
        0  & 0 & \rho_3 \\
 \end{pmatrix},
$$
and has a direct summand
\begin{equation} \label{direct summand}
    \begin{pmatrix}
    \rho_1 & b \\
    0 & \rho_3 \\
 \end{pmatrix},
\end{equation}
which is not a split extension by \eqref{eq 5.2}, giving rise to a non-zero element in $H^{1}_{\Sigma} (F, \Hom(\rho_3, \rho_1))$ that is 0. Thus $c$ gives rise to a non-split extension of $\rho_3$ by $\rho_2$.

$\begin{pmatrix}
    \rho_2 & c \\
    0 & \rho_3
\end{pmatrix}$ 
has a scalar centralizer. Applying Theorem 4.1 in \cite{berger2020deformations}, we have 
$$
\overline{\sigma'}_{\Lambda} = 
    \begin{pmatrix}
        \rho_1 & a & b \\
        0  & \rho_2 & c \\
        0  & 0 & \rho_3 \\
\end{pmatrix}
\not \cong \rho_1 \oplus \begin{pmatrix}
\rho_2 & c \\
  & \rho_3
\end{pmatrix}.
$$
It remains to show that $a$ gives rise to a non-split extension of $\rho_2$ by $\rho_1$, i.e.,
$$
\begin{pmatrix}
\rho_1 & a \\
0 & \rho_2
\end{pmatrix}
\not \cong \rho_1 \oplus \rho_2.
$$

A conjugation computation shows that if $a$ is a coboundary in $H^{1}(\Q,\Hom(\rho_2,\rho_1))$, then 
$$
\begin{pmatrix}
    \rho_1 & a & b \\
    0  & \rho_2 & c \\
    0  & 0 & \rho_3 \\
\end{pmatrix}
\cong 
\begin{pmatrix}
    \rho_1 & 0 & b' \\
    0  & \rho_2 & c \\
    0  & 0 & \rho_3 \\
\end{pmatrix}
\cong 
\begin{pmatrix}
    \rho_2 & 0 & c \\
    0  & \rho_1 & b' \\
    0  & 0 & \rho_3 \\
\end{pmatrix}.
$$
As a quotient of $\overline{\sigma'}$, 
$
\begin{pmatrix}
    \rho_1& b' \\
    0 & \rho_3 \\
\end{pmatrix}
$
is Fontaine--Laffaille at $p$, thus giving rise to an element in $H^{1}_{\Sigma} (F, \Hom(\rho_3, \rho_1)) = 0$. Thus $b'$ must be a coboundary. Also, we have by conjugation,
$$
\begin{pmatrix}
    \rho_2 & 0 & c \\
    0  & \rho_1 & b' \\
    0  & 0 & \rho_3 \\
\end{pmatrix}
\cong
\begin{pmatrix}
    \rho_2 & 0 & c \\
    0  & \rho_1 & 0 \\
    0  & 0 & \rho_3 \\
\end{pmatrix}
$$
contradicting 
\begin{equation*}
    \overline{\sigma'}_{\Lambda}
    \not \cong \rho_1 \oplus 
    \begin{pmatrix}
    \rho_2 & c \\
    0 & \rho_3
    \end{pmatrix}.
\end{equation*}
Thus $a$ must give rise to a non-split extension of $\rho_2$ by $\rho_1$.
\end{proof}

\begin{rmk}{\label{Ribets lemma}}
(A discussion on Ribet's Lemma) 
In Proposition \ref{Lattice}, we obtained a lattice such that the three Jordan-H\"{o}lder factors in the associated reduction are ordered exactly as $\rho_1, \rho_2, \rho_3$ because we utilized the Fontaine--Laffaille at $p$ condition. However, we note the argument also holds with the Fontaine--Laffaille at $p$ condition replaced by any other condition that is preserved when taking subquotients. The arithmetic aspect of the Fontaine--Laffaille at $p$ condition was not needed for the Proposition.

Generally, using the same argument and an analogous Selmer group condition, for any arbitrary ordering of $\{\rho_1,\rho_2,\rho_3\}$, say $\rho_i,\rho_j,\rho_k$,  we can find a lattice such that $\begin{pmatrix}
    \rho_i & *\\
    0 & \rho_j
\end{pmatrix},
\begin{pmatrix}
    \rho_j & *\\
    0 & \rho_k
\end{pmatrix}$
are non-split extensions, extending Ribet's Lemma \cite[Proposition 2.1]{ribet1976modular} to residual representations with 3 Jordan-H\"{o}lder factors. However, without such a condition, we could only achieve a weaker conclusion. 

In a general setting, let $\R \in \text{ob}(\hat{\C}_{W(k)})$ be a domain with field of fractions $\text{Tot}(\R)$, and $\rho: \G \to \GL_m(\text{Tot}(\R))$ be an absolutely irreducbile representation of $G_{\Sigma}$ on $\text{Tot}(\R)^m$. Let $\overline{\rho}$ be the mod-$p$ representation of $\rho$ on $k^m$ with $s$ Jordan-H\"{o}lder factors. Suppose $\overline{\rho}_1$, ..., $\overline{\rho}_{s}$ are the Jordan-H\"{o}lder factors of $\overline{\rho}$ such that $\overline{\rho}_i$'s are absolutely irreducible and $\overline{\rho}_i \not \cong \overline{\rho}_j$ for $i \ne j$. 

If $\Gamma$ is a graph whose vertices are $\{\overline{\rho}_i\}$, and there is a directed edge from $\overline{\rho}_i$ to $\overline{\rho}_j$ if there exists an non-split extension of $\overline{\rho}_j$ by $\overline{\rho}_i$, then Corollary 1 in \cite{Bellaïche2003} gives a generalized Ribet's Lemma: $\Gamma$ is connected as a directed graph. Following this, the proposition below distinguishes between the case with two Jordan-H\"{o}lder factors and the case with a higher number of Jordan-H\"{o}lder factors. We summarize Corollary 1 in \cite{Bellaïche2003} below.
\end{rmk}

\begin{prop}
Suppose $\Bar{\rho}$ has only two Jordan-H\"{o}lder factors, i.e., $s = 2$, then there exist respective lattices such that the associated reductions of $\rho$, $\begin{pmatrix}
    \overline{\rho}_1 & *\\
    & \overline{\rho}_2
\end{pmatrix}$, $\begin{pmatrix}
    \overline{\rho}_2 & *\\
    & \overline{\rho}_1
\end{pmatrix}$
are non-split from either direction, generalizing Ribet's Lemma for residual representations with 2 Jordan-H\"{o}lder factors.

Suppose $\Bar{\rho}$ has more than two Jordan-H\"{o}lder factors, i.e., $s > 2$, one can only conclude that $\Gamma$ is connected as a directed graph, without concluding which extensions $\begin{pmatrix}
    \overline{\rho}_i & *\\
    & \overline{\rho}_j
\end{pmatrix}$ are non-split. 
\end{prop}

A condition that is preserved when taking subquotients such as Fontaine--Laffaille at $p$ and respective Selmer group conditions allow us to remove this ambiguity, and show that for any pair $(i,j)$, there is a lattice yielding a non-split subquotient 
$\begin{pmatrix}
    \overline{\rho}_i & *\\
    & \overline{\rho}_j
\end{pmatrix}$, 
i.e., that any vertices of the graph $\Gamma$ are connected by an oriented edge in both directions.

\section{$R = \mathbf{T}$ theorem}{\label{R=T section}}
In this section, we formulate sufficient conditions to prove an $R = \mathbf{T}$ theorem, provided enough congruences among automorphic forms, even if $R$ is not a discrete valuation ring.

Let $S$ be the set of representations $\rho_{f}: G_{\Sigma} \to \GL_n(E)$ such that $\overline{\rho}_{f}^{\sss} \cong \overline{\sigma}^{\sss}$ and that $\rho_{f}$ is irreducible and Fontaine--Laffaille at $p$. Then by Proposition \ref{Lattice}, there exists a $\G$-stable lattice such that
$$
\overline{\rho}_{f} \cong
\begin{pmatrix}
    \rho_1 & *_1 & *_2 \\ 
    0 & \rho_2 & *_3 \\ 
    0 & 0 & \rho_3 \\ 
\end{pmatrix},
$$
and 
$
\begin{pmatrix}
    \rho_1 & *_1\\
    0 & \rho_2
\end{pmatrix},
\begin{pmatrix}
    \rho_2 & *_3\\
    0 & \rho_3
\end{pmatrix}$ are non-split extensions. Such residual representation is unique under Assumption \ref{main assumptions} (1)(2) (cf. Proposition \ref{Uniqueness of residual}), so $\overline{\rho}_{f} \cong \overline{\sigma}$, and we obtain an $\mO$-algebra map by representability of $R$,
$$
\phi : R \to \prod_{\rho_{f} \in S} \mO.
$$
We write $\mathbf{T} = \phi(R)$.

\begin{thm}{\label{R=T}}
Suppose $S$ is finite. Assume Assumptions \ref{main assumptions} and $\# H^1_{\Sigma}(F,\Hom(\Tilde{\rho_2},\Tilde{\rho_1}) \otimes_{E} E/\mO) \le \# \mO/\varpi^r \mO$, $\# \mathbf{T}/ \phi(I^{\tot}) \ge \# \mO/\varpi^r$. Then $\phi: R \to \mathbf{T}$ is an isomorphism.
\end{thm}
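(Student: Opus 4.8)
The plan is as follows. Since $\mathbf{T} = \phi(R)$ by definition, the map $\phi\colon R \to \mathbf{T}$ is automatically surjective, so the whole content is injectivity. Write $I = I^{\tot}$; by Proposition \ref{principality} it is principal, say $I = (t)$ with $t \in R$, and $I \subseteq \m$ because the residual representation $\overline{\sigma}$ is upper-triangular, so its associated pseudorepresentation $\overline{D} = \det(\rho_1\oplus\rho_2\oplus\rho_3)$ is reducible and hence $I^{\tot} \subseteq \m$ by the defining property of the (total) reducibility ideal.

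First I would run a numerical criterion. The surjection $\phi$ induces a surjection $R/I \twoheadrightarrow \mathbf{T}/\phi(I)$, whence $\#\mathbf{T}/\phi(I) \le \#R/I$. Corollary \ref{No deformations to Z/Z_p^2} gives $\#R/I \le \#\mathcal{O}/\varpi^{r}\mathcal{O}$, while the hypothesis of the theorem gives $\#\mathbf{T}/\phi(I) \ge \#\mathcal{O}/\varpi^{r}\mathcal{O}$. Chaining these,
$$
\#\mathcal{O}/\varpi^{r}\mathcal{O} \le \#\mathbf{T}/\phi(I) \le \#R/I \le \#\mathcal{O}/\varpi^{r}\mathcal{O},
$$
so all four finite cardinalities agree and the surjection $R/I \twoheadrightarrow \mathbf{T}/\phi(I)$ is an isomorphism. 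Unwinding this, $\ker\phi + I = I$, i.e. $\ker\phi \subseteq I = (t)$.

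Next I would show $\phi(t)$ is a non-zero-divisor in $\mathbf{T}$. By construction $\mathbf{T} \subseteq \prod_{\rho_f \in S}\mathcal{O}$, and the $\rho_f$-component of $\phi$ is the $\mathcal{O}$-algebra map $\phi_f\colon R \to \mathcal{O}$ classifying the $G_\Sigma$-stable lattice in $\rho_f$ furnished by Proposition \ref{Lattice}; in particular $D \otimes_{R,\phi_f}\mathcal{O} = \det\circ\rho_f$. If $\phi_f(t)$ were $0$ then $\phi_f(I) = 0$, so $I \subseteq \ker\phi_f$, and by the defining property of the reducibility ideal the pseudorepresentation $D$ becomes reducible modulo $\ker\phi_f$; base-changing along $R/\ker\phi_f \hookrightarrow \mathcal{O} \hookrightarrow E$ would then make $\det\circ(\rho_f \otimes_{\mathcal{O}} E)$ reducible, contradicting the absolute irreducibility of $\rho_f$. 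Hence $\phi_f(t) \ne 0$ for every $f \in S$, and since each $\mathcal{O}$ is a domain the tuple $(\phi_f(t))_f$ is a non-zero-divisor in $\prod_f \mathcal{O}$, hence so is $\phi(t)$ in the subring $\mathbf{T}$.

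Finally I would bootstrap the inclusion $\ker\phi \subseteq (t)$. Given $x \in \ker\phi$, write $x = t y_1$ with $y_1 \in R$; applying $\phi$ gives $\phi(t)\phi(y_1) = 0$, so $\phi(y_1) = 0$ by the previous step, hence $y_1 \in \ker\phi \subseteq (t)$ and $x \in (t^2)$. Iterating, $x \in \bigcap_{n\ge1}(t^n) = \bigcap_{n\ge1} I^{n}$, which vanishes by the Krull intersection theorem since $R$ is Noetherian and local with $I \subseteq \m$. Thus $\ker\phi = 0$ and $\phi$ is an isomorphism. I expect the third step — that the generator of $I^{\tot}$ maps to a non-zero-divisor in $\mathbf{T}$ — to be the crux: the descent in the last step collapses at once given this, so the real work is identifying $\phi_f(I^{\tot})$ with the reducibility ideal of the pseudorepresentation attached to $\rho_f$ and exploiting that every $\rho_f \in S$ is irreducible.
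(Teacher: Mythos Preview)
Your proof is correct and follows the same approach as the paper: establish $\#R/I^{\tot} \le \#\mathcal{O}/\varpi^r$ via Corollary \ref{No deformations to Z/Z_p^2}, combine with the hypothesis to get $R/I^{\tot} \cong \mathbf{T}/\phi(I^{\tot})$, then lift to an isomorphism $R \cong \mathbf{T}$. The paper delegates this last lifting step to Theorem 4.1(i) of \cite{berger2013deformation}, whereas you have unpacked that argument explicitly (principality of $I^{\tot}$, $\phi(t)$ a non-zero-divisor in $\mathbf{T}$ since each $\rho_f$ is irreducible, then Krull intersection), which is exactly the content of the cited result.
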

\begin{proof}
Following the arguments in Theorem \ref{main thoerem}, we get $\# R/I^{\tot} \le \# \mO/\varpi^r$. Thus we get that $\phi$ induces an isomorphism $\overline{\phi}: R/I^{\tot} \to \mathbf{T}/\phi(I^{\tot})$. And Theorem 4.1(i) in \cite{berger2013deformation} implies that $\phi$ is an isomorphism.
\end{proof}

\begin{rmk}
In applications, $\mathbf{T}$ will be identified with some local complete Hecke algebra, and $\rho_f$ will be the Galois representation attached to an automorphic representation $f$ and $S$ will be the subset of ($L$-packets of) automorphic representations $f$ whose associated Galois representation $\rho_f$ satisfies our deformation conditions. The condition on $\# \mathbf{T}/\phi(I^{\tot})$ will be a consequence of a lower bound on $\# \mathbf{T}/J$, where $J$ is some relevant congruence ideal. Such a lower bound of $\# \mathbf{T}/J$ can be achieved by congruences of automorphic forms. See Section \ref{Ikeda lifts} for examples and details.
\end{rmk}

\begin{rmk}
Theorem \ref{R=T} utilizes the numerical criterion due to Bergen--Kloin \cite[Theorem 4.1]{berger2013deformation}, which is an alternative of the well-known Wiles--Lenstra numerical criterion \cite{de1997criteria}. Unlike the Wiles--Lenstra criterion, which identifies an order related to the augmentation ideal of $R$, the Bergen--Kloin identifies an order related to a principal ideal of $R$. In many cases, the principal ideal is identified to be the reducbility ideal. 
\end{rmk}

\begin{rmk}{\label{Selma not cyclic}}
In Theorem \ref{R=T}, if $\# H^1_{\Sigma}(F,\Hom(\Tilde{\rho_2},\Tilde{\rho_1}) \otimes_{E} E/\mO) = \# \mathbf{T}/J = \# \mO/\varpi$, by Lemma \ref{torsion Selmer is Selmer torsion}, then it implies $\dim_{k} H^1_{\Sigma}(F,\Hom(\rho_2,\rho_1) = 1$ in Assumption \ref{main assumptions}. In this case, $R$ and $\mathbf{T}$ will be identified as a DVR. 

However, it is certainly possible that $\# H^1_{\Sigma}(F,\Hom(\Tilde{\rho_2},\Tilde{\rho_1}) \otimes_{E} E/\mO) \ge \# \mO/\varpi$ and $\dim_{k} H^1_{\Sigma}(F,\Hom(\rho_2,\rho_1) = 1$. For example, in the case of elliptic curves, let $E/\Q$ be the elliptic curve with conductor 66 and minimal Weierstrass equation,
$$
y^2=x^3-456219x-118606410.
$$
Then using MAGMA, we get $\text{Sel}_{2}(E) \cong (\Z/2)^3$ and $\dim \text{Sel}_2(E)[2] = 1$.
\end{rmk}

\section{Abelian Surfaces with Rational $p$-isogeny}\label{Abelian surface}

Let $A_{/ \Q}$ be an abelian surface over $\Q$. It is called a \textit{QM abelian surface} if $\End_{\Q} A$ is an order in the non-split quaternion algebra over $\Q$. In analogy with the Taniyama-Shimura Conjecture, in \cite{brumer2014paramodular}, Brumer and Kramer proposed a precise conjecture of a one-to-one correspondence between isogeny classes of abelian surfaces $A_{/ \Q}$ of conductor $N$ with $\text{End}_{\Q} A = \Z$ or QM abelian surfaces $A_{/ \Q}$ of conductor $N^2$, and cuspidal, nonlift weight 2 Siegel modular forms $f$ on the level $K(N)$ with rational eigenvalues, up to scalar multiplication. Here $K(N)$ is the paramodular group of level $N$ defined by $K(N) = \gamma M_4 (\Z) \gamma^{-1} \cap Sp_4(\Q)$ with $\gamma = \text{diag}[1, 1, 1, N]$.
\footnote{The authors later made revisions to the conjecture. See \cite{https://doi.org/10.48550/arxiv.1004.4699} for details.}
The conjecture is known as the Paramodular Conjecture.

There has been important progress in verifying this conjecture. The first verified surface is of prime conductor $277$ by \cite{brumer2019paramodularity} and there are no smaller conductors on either side of the correspondence for all prime $N < 277$ \cite[Proposition 1.5]{brumer2014paramodular} \cite[Theorem 1.2]{poor2015paramodular}. In \cite{boxer2021abelian}, the authors showed $A$ is potentially modular under the assumptions that the residual Galois representation $\overline{\rho}_A$ from $A(\overline{\Q})[p]$ has image containing $\text{Sp}_4(\F)$ and that $\overline{\rho}_A$ is modular, proving the conjecture for infinitely many cases. In particular, \cite{calegari2020some} applied the main theorem of \cite{boxer2021abelian} and provided explicit examples of modular abelian surfaces, e.g., curves with conductor $2^4 5^3 7^2, 2^{15} 5$.
In \cite{berger2020deformations}, the authors studied the modularity of surfaces with rational torsions and square-free conductors and proved the modularity of the abelian surface of square-free level $731$ as an example. 

In this section, we consider abelian surfaces with a rational $p$-isogeny and $\End_{\Q} A = \Z$, a more general case than the one studied in \cite{berger2020deformations}, who assumed that $A(\Q)[p] \ne 0$. We also assume that $A$ has polarization degree prime to $p$. We first show that the residual representation arising from such a surface $A$ has the configuration as in Theorem \ref{main thoerem}, and establish sufficient conditions for the universal deformation ring to be a DVR. We also discuss the implications of the $\lambda$-part of the Bloch--Kato conjecture to our result, relating special $L$-values to the structure of the universal deformation ring.

\subsection{Matrix Form of Residual Galois Representation of Abelian Surface with Rational Isogeny}

\begin{defn}
An abelian surface $A_{/ \Q}$ is said to have a \textbf{rational $n$-isogeny} (or a cyclic isogeny of degree $n$) if $A$ has a cyclic subgroup of order $n$ defined over $\Q$ that is stable under the action of $\Gal(\overline{\Q}/\Q)$.
\end{defn}
In particular, any surface with a rational torsion of order $n$ has a rational $n$-isogeny. 

In this section, 
let $A$ be an abelian surface with conductor $N$. Suppose $A$ has a rational $p$-isogeny and a polarization of degree prime to $p$ such that $p \nmid N$ and $\End_{\Q} A = \Z$. Let $\Sigma = \{p\} \cup \{\ell | \ell \mid N\}$. Let $T_p(A)$ be the $p$-adic Tate module of $A$. Then $V_p(A) := T_p(A) \otimes E$ gives rise to a Galois representation $\sigma_A: \G \to \GL_4(E)$. We will consider the case when $\End_{E[G_{\Q}]}(V_p(A)) = \End_{\Q}(A) \otimes E = E$. Then $\sigma_A$ is absolutely irreducible as it is semisimple (cf. \cite{faltings1983endlichkeitssatze} or \cite{faltings1986finiteness}).

As $A$ has a rational $p$-isogeny, we know that the semi-simplication of residual Galois representation $\overline{\sigma}_A: \G \to \text{GL}_{4}(\F)$ attached to $A$, $\overline{\sigma}_A^{ss}$ has a Jordan-H\"{o}lder factor, $\psi: \G \to \F^{\times}$. Let $\chi$ be the mod-$p$ cyclotomic character. Using Weil pairing, one obtains the following Proposition describing the semi-simplication of $\overline{\sigma}_A$.

\begin{prop}{\label{A ss}}
One has $\overline{\sigma}_A: \G \to \text{GL}_{4}(\F)$ satisfies
\begin{equation}
   \overline{\sigma}_A^{\sss} \cong \psi \oplus \rho^{\sss} \oplus \psi^{-1} \chi
\end{equation}
for some character $\psi:  \G \to \F^{\times}$ and $\rho: G_{\Sigma} \to \GL_2(\F)$.
\end{prop}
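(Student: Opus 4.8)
The plan is to produce a three-step $\G$-stable filtration of $\overline\sigma_A$ whose graded pieces are $\psi$, a two-dimensional representation $\rho$, and $\psi^{-1}\chi$; passing to semisimplifications then yields the claim. The two ingredients are the $\G$-stable line coming from the rational $p$-isogeny (which already supplies the factor $\psi$ named in the text) and the Weil pairing coming from the polarization, which forces the ``mirror'' factor $\psi^{-1}\chi$ to appear.

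First I would fix notation: write $M$ for the four-dimensional $\F$-vector space underlying $\overline\sigma_A$, and let $L \subseteq M$ be the $\G$-stable line obtained from the cyclic subgroup scheme of order $p$ defined over $\Q$, so that $\G$ acts on $L$ through $\psi$. Next I would set up the pairing. Since the polarization $\lambda\colon A \to A^\vee$ has degree prime to $p$, it induces an isomorphism $A[p]\xrightarrow{\sim}A^\vee[p]$; composing with the canonical Weil pairing $A[p]\times A^\vee[p]\to\mu_p$ and extending $\F$-linearly gives a pairing $e_\lambda\colon M\times M\to\F(1)$. The three properties I need are standard: $e_\lambda$ is perfect (degree of $\lambda$ prime to $p$), $e_\lambda$ is alternating (because $\lambda$ is a polarization, i.e.\ arises from an ample line bundle), and $e_\lambda$ is $\G$-equivariant with multiplier $\chi$, meaning $e_\lambda(gx,gy)=\chi(g)e_\lambda(x,y)$ (because $\lambda$ is defined over $\Q$ and $\mu_p$ carries the $\G$-action $\chi$).

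Then I would run the filtration argument. Because $L$ is one-dimensional and $e_\lambda$ is alternating, $L$ is isotropic, so $L\subseteq W:=L^{\perp}$; the $\chi$-equivariance of $e_\lambda$ together with the $\G$-stability of $L$ shows $W$ is $\G$-stable, and perfectness of $e_\lambda$ gives $\dim_\F W=3$. The pairing descends to a perfect $\G$-equivariant pairing $M/W\times L\to\F(1)$, which identifies $M/W$ with $\Hom_\F(L,\F(1))$, on which $\G$ acts by $\psi^{-1}\chi$. Hence $0\subseteq L\subseteq W\subseteq M$ is a filtration of $\G$-modules with successive quotients $\psi$ (dimension $1$), $\rho:=W/L$ (dimension $2$), and $\psi^{-1}\chi$ (dimension $1$). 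Since the semisimplification of a module is the direct sum of the Jordan--H\"older factors of the graded pieces of any filtration, and the Jordan--H\"older factors of $\rho$ are exactly those of $\rho^{\sss}$, we conclude $\overline\sigma_A^{\sss}\cong\psi\oplus\rho^{\sss}\oplus\psi^{-1}\chi$.

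I do not anticipate a real obstacle; the proposition is essentially bookkeeping once the Weil pairing is in place. The only point warranting care is the degenerate possibility $\psi=\psi^{-1}\chi$ (equivalently $\psi^2=\chi$): the filtration above still produces the stated decomposition in that case, since $L$ and $M/W$ genuinely occur as a submodule and a quotient, so no multiplicity subtlety arises and no case split is needed. The hypotheses enter exactly twice: the rational $p$-isogeny provides $L$, and the polarization of degree prime to $p$ (together with $p\nmid N$, which guarantees $\overline\sigma_A$ is unramified outside $\Sigma$ so that it is a representation of $\G$) provides the perfect alternating $\chi$-equivariant pairing $e_\lambda$.
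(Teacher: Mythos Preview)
Your proposal is correct and follows the same approach the paper indicates (and that appears in the commented-out proof in the source): use the Weil pairing coming from the polarization of degree prime to $p$ to show that the $\psi$-line forces a $\psi^{-1}\chi$-quotient. Your organization via the orthogonal complement $L^{\perp}$ and the alternating property of $e_\lambda$ is a bit more explicit than the paper's version, which computes the dual action on $\Hom(V_x,\mu_p)$ directly and leaves the nesting $L\subseteq L^{\perp}$ implicit, but the content is the same.
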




The character $\psi$ is unramified outside of $p N$, thus $\psi = \phi \chi^{i}$ for some character $\phi$ unramified at $p$ and $i \in \{0,1\}$ as these are the only powers of $\chi$ realized by finite flat group schemes over $\Z_p$. Suppose $i = 1$, then $\psi^{-1} \chi = \phi^{-1}$ is unramified at $p$. Thus without loss of generality, we can assume $\psi$ is unramified at $p$ and odd. Also, it follows from Proposition \ref{A ss} that if $\psi$ is non-trivial, then the conductor $N$ is not square-free as $\text{cond}(\psi)^2 \mid N$. 

Recall that $\epsilon$ is the $p$-adic cyclotomic character. Assume that $\rho$ is absolutely irreducible. As $\sigma_A$ has determinant $\epsilon^2$, we have $\det \rho = \chi$, and
it follows that $\rho$ is odd. Note by Serre’s modularity conjecture (proved in \cite{khare2009serre}), one has $\rho = \overline{\rho}_f$ where $\rho_f: \G \to  \GL_{2}(\Q_p)$ is the Galois representation attached to a newform $f \in S_2(N_f)$ for some level $N_f$, and $\overline{\rho}_f$ is its mod $p$ reduction.

As $\sigma_A$ is irreducible, if $H^{1}_{\Sigma}(\Q,\psi^{2}\chi^{-1}) = 0$, by Proposition \ref{Lattice}, we can choose a $\G$-stable lattice such that residual Galois representation $\overline{\sigma}_{A}: G_{\Sigma} \to \GL_{4}(\F)$ satisfies
\begin{equation}{\label{residual matrix form}}
    \overline{\sigma}_{A} \cong 
    \begin{pmatrix}
        \psi & a & b \\
        0  & \rho & c \\
        0  & 0 & \psi^{-1} \chi \\
    \end{pmatrix},
\end{equation}
and $\begin{pmatrix}
    \psi & a\\
    0 & \rho
\end{pmatrix},
\begin{pmatrix}
    \rho & c\\
    0 & \psi^{-1} \chi
\end{pmatrix}$ are non-split extensions. Furthermore, define $\tau: \Z_p[\G] \to \Z_p[\G]$ by $\tau(g) = \epsilon(g) g^{-1}$. Then $\tau$ is an anti-involution interchanging $\psi$ with $\psi^{-1} \chi$ and leaving $\rho$ fixed. 

By the discussion above, $\overline{\sigma}_A$ is as in the setup of Section \ref{general residual set up}. 

\begin{rmk}
As
$$
\Hom(\rho,\psi) = \rho^{\vee} \otimes \psi = \rho \otimes (\psi \chi^{-1}) = (\psi^{-1} \chi)^{\vee} \otimes \rho = \Hom(\psi^{-1} \chi, \rho),
$$
one has $a \rho^{-1}, c \psi \chi^{-1} \in H^{1}(\Q, \rho(\psi^{-1} \chi))$. 
\end{rmk}

\subsection{Examples}
Thanks to Shiva Chidambaram, who helped us compute mod-$5$ Galois representations of principally polarized abelian surfaces with a $1$-dimensional Galois-stable subspace, we were able to compute examples of principally polarized abelian surfaces with rational $5$-isogenies and specific descriptions of $\psi$ when $p = 5$. All surfaces obtained are Jacobians of genus 2 curves. Finally, we know by the algorithm that only characters $\psi, \psi^{-1}\chi$ occur in $\overline{\sigma}_A^{\sss}$, thus $\rho$ is irreducible in these examples.
{\footnote{Please find the MAGMA code at \textit{https://github.com/cocoxhuang/Find-One-Dimension-Character/tree/main}}}
We also know  $\End_{\Q}(A) = \Z$ by consulting LMFDB \cite{lmfdb}. 
Please see Table 1 below.

\begin{table}[h!]{\label{Table 1}}
    \centering
    $$\bm{p = 5}$$
    \begin{tabular}{ |c|c|c|c| }
        \hline
        \hline
        $N$ & LMFDB Label & $\Q$-Torsion & Conductor of $\psi$\\
        \hline
        8960 = $2^{8} \cdot 5 \cdot 7  $ & 8960.c.17920.1 & $\Z/2\Z\oplus\Z/2\Z$ & 16\\
        13351 = $13^{2} \cdot 79  $ & 13351.a.173563.1 & $\Z/2\Z$ & 13 \\
        15379 = $7 \cdot 13^{3}  $ & 15379.a.107653.1 & trivial & 13\\ 
        26112 = $2^{9} \cdot 3 \cdot 17  $ &  26112.a.26112.1 & trivial & 16 \\ 
        45568 = $2^{9} \cdot 89  $ & 45568.a.45568.1 & trivial & 16\\ 
        54043 = $11 \cdot 17^{3}  $ & 54043.a.54043.1 & trivial & 17\\ 
        108086 = $2 \cdot 11 \cdot 17^{3} $ & 108086.a.864688.1 & $\Z/2\Z$ & 17\\
        146944 = $2^{9} \cdot 7 \cdot 41  $ & 146944.b.293888.1 & $\Z/3\Z$ & 16\\
        191607 = $3 \cdot 13 \cdot 17^{3}  $ & 191607.b.191607.1 & trivial & 17\\
        230911 = $17^{3} \cdot 47 $ & 230911.a.230911.1 & trivial & 17\\
        321280 = $2^{8} \cdot 5 \cdot 251 $ & 321280.e.652560.1 & trivial & 16\\
        349696 = $2^{9} \cdot 683 $ & 349696.a.349696.1 & trivial & 16\\
        355914 = $2 \cdot 3^{4} \cdot 13^{3}  $ & 355914.a.355914.1 & trivial & 39\\ 
        454912 = $2^{8} \cdot 1777 $ & 454912.p.909824.1 & trivial & 16\\
        569023 = $7 \cdot 13^{3} \cdot 37 $ & 569023.a.569023.1 & trivial & 13\\
        889253 = $17^{3} \cdot 181 $ & 889253.a.889253.1 & trivial & 17\\
        
        \hline
        \end{tabular}
    \caption{Examples of principally polarized abelian surfaces with rational cycles}
\end{table}

\subsection{$R$ is a DVR}{\label{section $R$ is a DVR}}

We are now ready to state our main application of the results in Section \ref{R is DVR theorem section} to abelian surfaces. The following Theorem is an easy consequence of Theorem \ref{main thoerem}.

\begin{thm}{\label{R is DVR}}
Let $A$ be an abelian surface with conductor $N$ and a rational $p$-isogeny where $p \nmid N$ and $q \not \equiv 1 \mod p$ for all $q \in \Sigma$ and $\End_{\Q} A = \Z$. $\overline{\sigma}_A^{\sss} \cong \psi \oplus \overline{\rho}_f \oplus \psi^{-1} \chi$ for some character $\psi: \G \to \F^{\times}$. Suppose $\overline{\rho}_f$ is absolutely irreducible. Let $\Tilde{\psi}$ be the Teichm\"uller lift of $\psi$. Assume there is a unique Fontaine--Laffaille deformation of $\overline{\rho}_f$ to $\GL_2(\Z_p)$ up to strict equivalence class, and 
\begin{enumerate}
    \item $H^{1}_{\Sigma}(\Q,\psi^{2}\chi^{-1}) = 0$;
    \item $\dim_{\F} H^{1}_{\Sigma}(\Q,\overline{\rho}_f(\psi^{-1}))= 1$;
    \item $\# H^1_{\Sigma}(\Q,\rho_f(\tpsi \epsilon^{-1}) \otimes E/\mO) \le p$.
\end{enumerate}
Then $R \cong \Z_p$ and there is a unique lift $\sigma_A$ of $\sigma$ that is Fontaine--Laffaille at $p$.
\end{thm}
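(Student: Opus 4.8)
The plan is to deduce the statement from Theorem~\ref{main thoerem}, applied with $F=\Q$, $\rho_1=\psi$, $\rho_2=\overline{\rho}_f$, $\rho_3=\psi^{-1}\chi$, so that $n=n_1+n_2+n_3=1+2+1=4$; recall $p>3$, hence $p>n$, and $p\nmid N$, so $\Sigma=\{p\}\cup\{\ell\mid N\}\supseteq\Sigma_p$. The discussion preceding the theorem already shows that $\overline{\sigma}_A$ fits the setup of Section~\ref{general residual set up}: it is Fontaine--Laffaille at $p$ (as $p\nmid N$, $T_p(A)$ is a lattice in a crystalline representation with Hodge--Tate weights in $\{0,1\}\subset[0,p-2]$, so $T_p(A)$ and every subquotient of $\overline{\sigma}_A$ lies in the essential image of $\mathbf{G}$); for a suitable $G_{\Sigma}$-stable lattice $\Lambda\subset V_p(A)$ the reduction $\overline{\sigma}_A$ has the form \eqref{residual matrix form} with both of its sub/quotient extensions non-split, by Proposition~\ref{Lattice}, whose hypothesis $H^1_{\Sigma}(\Q,\Hom(\rho_3,\rho_1))=H^1_{\Sigma}(\Q,\psi^2\chi^{-1})=0$ is precisely hypothesis~(1) of the present theorem; $\overline{\sigma}_A$ is $\tau$-self-dual for $\tau(g)=\epsilon(g)g^{-1}$, which interchanges $\psi$ with $\psi^{-1}\chi$ and fixes $\overline{\rho}_f$ (here $\det\overline{\rho}_f=\chi$); and the $\rho_i$ are absolutely irreducible (using that $\overline{\rho}_f$ is) and pairwise non-isomorphic even after restriction to $G_{\Q_p}$, as $\psi$ is unramified at $p$ while $\psi^{-1}\chi$ is ramified at $p$. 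This accounts for structural hypotheses (1), (3), (4) of Theorem~\ref{main thoerem} and the non-isomorphism part of (2). It remains, for (2), to note that $R_{i,\FL}=\mO$: for $\rho_2=\overline{\rho}_f$ this is the standing hypothesis (the unique lift $\Tilde{\rho_2}$ being $\rho_f$, attached to the associated weight-$2$, level-prime-to-$p$ newform $f$, hence Fontaine--Laffaille at $p$), while for the characters $\rho_1=\psi$ and $\rho_3=\psi^{-1}\chi$ one checks rigidity directly, the Fontaine--Laffaille lifts being $\tpsi$ and $\tpsi^{-1}\epsilon$ respectively; this last verification is the only input not supplied verbatim by an earlier result, and I expect it to be the (mild) crux.

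Next I would match the three numbered hypotheses of Theorem~\ref{main thoerem} against (1)--(3) here, which is routine bookkeeping with Tate twists. Hypothesis~(1) of Theorem~\ref{main thoerem} is hypothesis~(1) here. Since $\Hom(\rho_1,\rho_2)=\Hom(\psi,\overline{\rho}_f)=\overline{\rho}_f(\psi^{-1})$, hypothesis~(2) of Theorem~\ref{main thoerem}, $\dim_{\F}H^1_{\Sigma}(\Q,\Hom(\rho_1,\rho_2))=1$, is hypothesis~(2) here; moreover the full Assumption~\ref{main assumptions}(2) used inside the proof of Theorem~\ref{main thoerem}, $\dim H^1_{\Sigma}(\Q,\Hom(\rho_2,\rho_1))=\dim H^1_{\Sigma}(\Q,\Hom(\rho_3,\rho_2))=1$, follows because $\Hom(\rho_2,\rho_1)\cong\overline{\rho}_f(\psi\chi^{-1})\cong\Hom(\rho_3,\rho_2)$ as $G_{\Q}$-modules (using $\overline{\rho}_f^{\vee}\cong\overline{\rho}_f(\chi^{-1})$), so these two Selmer groups coincide, and $\tau$-self-duality identifies their common dimension with $\dim H^1_{\Sigma}(\Q,\Hom(\rho_1,\rho_2))=1$ (the Remark after Assumption~\ref{main assumptions}, cf.\ Proposition~\ref{Ext with C pseudo}). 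Finally $\Hom(\Tilde{\rho_2},\Tilde{\rho_1})=\Hom(\rho_f,\tpsi)\cong\rho_f(\tpsi\epsilon^{-1})$ (using $\rho_f^{\vee}\cong\rho_f(\epsilon^{-1})$, i.e.\ $\det\rho_f=\epsilon$), so hypothesis~(3) of Theorem~\ref{main thoerem} requires $\#H^1_{\Sigma}(\Q,\rho_f(\tpsi\epsilon^{-1})\otimes_E E/\mO)\le\#\mO/\varpi\mO$, which is implied by hypothesis~(3) here since $p\le\#\mO/\varpi\mO$.

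With all hypotheses verified, Theorem~\ref{main thoerem} gives that the maximal ideal of $R$ is $(\varpi)$. To upgrade this to $R\cong\mO$ I would invoke the ``furthermore'' clause of that theorem by producing a self-dual, Fontaine--Laffaille-at-$p$ deformation of $\overline{\sigma}_A$ to $\GL_4(\mO)$: the representation $\sigma_A$ on the lattice $\Lambda$ of Proposition~\ref{Lattice} works, being crystalline (good reduction at $p$, $p>3$) hence Fontaine--Laffaille at $p$, $\tau$-self-dual by the Weil pairing, and reducing to $\overline{\sigma}_A$ in the form \eqref{residual matrix form}; hence $\varpi$ is not nilpotent in $R$, and $R\cong\mO$. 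Since $R\cong\mO$, the universal self-dual Fontaine--Laffaille deformation of $\overline{\sigma}_A$ is, up to strict equivalence, the unique such deformation to $\GL_4(\mO)$, and it is realized by $\sigma_A$; this gives the asserted uniqueness of the lift.
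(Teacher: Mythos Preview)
Your proposal is correct and follows the paper's own approach: the paper states only that Theorem~\ref{R is DVR} is ``an easy consequence of Theorem~\ref{main thoerem}'' and gives no further detail, while you have carefully supplied the verification of each hypothesis. Your treatment is in fact more thorough than the paper's, correctly flagging the rigidity of the character deformation rings $R_{1,\FL},R_{3,\FL}$ and the need to recover Assumption~\ref{main assumptions}(2) from the stated hypotheses (which, as you note, follows either via the Remark after Assumption~\ref{main assumptions} or, more directly here, from hypothesis~(3) together with Lemma~\ref{torsion Selmer is Selmer torsion} and the non-splitness of the extensions).
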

\begin{proof}
It remains to check if conditions (1)--(4) in Theorem \ref{main thoerem} are satisfied. $\overline{\sigma}$ is Fontaine-Laffaille at $p$ as $p \nmid N$ (cf. \cite{serre1968good} and \cite{fontaine1982certains}), so condition (1) is satisfied. The assumptions on $\overline{\rho}_f$ and the primes in $\Sigma$ guarantee condition (2) is satisfied (cf. \cite[Proposition 9.5]{berger2013deformation}). The existence of lattice provided by Proposition \ref{Lattice} along with assumption (1) shows condition (3) is satisfied. The anti-involution $\tau: \Z_p[\G] \to \Z_p[\G]$ by $\tau(g) = \epsilon(g) g^{-1}$ guarantees condition (4).
\end{proof}

\begin{cor}(Unique isogeny classes)
Let $\psi$ be as in Theorem \ref{R is DVR} and assume its conditions hold, then there is a unique isogeny class of abelian surfaces $A/_{\Q}$ of conductor $N$ with a rational $p$-isogeny and $\overline{\sigma}_A^{\sss} \cong \psi \oplus \overline{\rho}_f \oplus \psi^{-1} \chi$. 
\end{cor}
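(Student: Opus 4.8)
The plan is to deduce the corollary directly from Theorem \ref{R is DVR}, which asserts that under the stated hypotheses the universal Fontaine–Laffaille deformation ring $R$ of $\overline{\sigma}_A$ is isomorphic to $\mO$, and that there is a unique Fontaine–Laffaille lift $\sigma_A$ of the residual representation to characteristic zero. The strategy is to translate ``isogeny class of abelian surfaces'' into ``Galois representation on the rational Tate module up to isomorphism'' via Faltings' isogeny theorem, and then to show that any two such surfaces give rise to deformations of one and the same residual representation $\overline{\sigma}_A$, so that the uniqueness of the lift forces the Tate modules to be isomorphic.

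First I would let $A_1$ and $A_2$ be two abelian surfaces over $\Q$ of conductor $N$, each with a rational $p$-isogeny and a polarization of degree prime to $p$, with $\overline{\sigma}_{A_i}^{\sss} \cong \psi \oplus \overline{\rho}_f \oplus \psi^{-1}\chi$ and with $\rho$ irreducible. As explained in the text preceding Proposition \ref{A ss}, each $V_p(A_i)$ is absolutely irreducible (one needs here that $\End_{\Q}(A_i)=\Z$, which is part of the standing hypotheses), Fontaine–Laffaille at $p$ since $p \nmid N$, and by Proposition \ref{Lattice} (applicable because Assumption \ref{main assumptions}(1), i.e. condition (1) of Theorem \ref{R is DVR}, holds) admits a $G_{\Sigma}$-stable lattice whose reduction has the upper-triangular shape \eqref{residual matrix form} with both relevant subquotient extensions non-split. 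By Proposition \ref{Uniqueness of residual} (using conditions (1), (2) of Assumption \ref{main assumptions}) this residual representation is unique up to isomorphism, hence equals $\overline{\sigma}_A$ for both $i=1,2$. Each $\sigma_{A_i}$ is then a Fontaine–Laffaille lift of $\overline{\sigma}_A$ to $\GL_4(\mO)$ (after possibly enlarging $E$), and moreover it is $\tau$-self-dual for the anti-involution $\tau(g)=\epsilon(g)g^{-1}$ by the Weil pairing, as recorded after \eqref{residual matrix form}. Thus both $\sigma_{A_1}$ and $\sigma_{A_2}$ are points of the deformation functor represented by $R \cong \mO$, and since there is a unique such lift (Theorem \ref{R is DVR}), we get $V_p(A_1) \cong V_p(A_2)$ as $E[G_{\Q}]$-modules, hence also $V_p(A_1) \cong V_p(A_2)$ as $\Q_p[G_{\Q}]$-modules after descending the coefficients.

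Finally I would invoke Faltings' isogeny theorem: two abelian varieties over a number field are isogenous if and only if their rational $\ell$-adic Tate modules are isomorphic as Galois modules for one (equivalently every) prime $\ell$. Applying this with $\ell = p$ gives that $A_1$ and $A_2$ are isogenous, proving the uniqueness of the isogeny class. The main subtlety — and the only place where care is genuinely needed — is to make sure the two surfaces actually produce \emph{the same} residual representation rather than merely residual representations with the same semisimplification: this is exactly what the combination of Proposition \ref{Lattice} (existence of a lattice with the prescribed non-split filtration, which crucially uses the Fontaine–Laffaille condition in place of Ribet's lemma) and Proposition \ref{Uniqueness of residual} (rigidity of that filtered residual representation under the Selmer conditions) delivers. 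Everything else is a formal consequence of representability of the deformation functor and $R \cong \mO$.
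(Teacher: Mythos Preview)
Your proposal is correct and is precisely the argument implicit in the paper, which states the corollary without proof immediately after Theorem \ref{R is DVR}. The route you outline --- using Proposition \ref{Lattice} and Proposition \ref{Uniqueness of residual} to pin down a common residual representation, then invoking $R\cong\mO$ from Theorem \ref{R is DVR} for uniqueness of the lift, and finally Faltings' isogeny theorem to pass from isomorphic rational Tate modules to isogeny --- is exactly what the paper intends.
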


\begin{rmk}{\label{R_i = O}}
To verify there is a unique Fontaine--Laffaille at $p$ deformation of $\rho_f$ to $\GL_2(\Z_p)$ up to strict equivalence class, assuming the Fontaine-Mazur conjecture, it suffices to check that there is no newform $h$ such that its associated Galois representation $\overline{\rho}_{h} \cong \overline{\rho}_{f}$ following a discussion in 10.2 in \cite{berger2013deformation}. If such $h$ exists, it again should be a newform in $S_{2}(N_f)$ following Theorem 0.2 in \cite{livne1989conductors}. Thus a sufficient condition for a unique lift of $\overline{\rho}_f$ is that there is no newform $h \in S_{2}(N_f)$ such that $f \equiv h (\mmod p)$.
\end{rmk}

In what follows we discuss situations when the conditions of Theorem \ref{R is DVR} are satisfied.

\subsubsection{\textsf{Conductor $N = 13351, 15379, 26112, 45568, 191607$}}
We will show that some abelian surfaces with such conductors satisfy all assumptions in Theorem \ref{R is DVR} except for conditions (1)–(3), as verifying these conditions is currently beyond the author's reach. We will examine the surface with conductor $15379$ in detail. The arguments for the other surfaces are similar.

By Table 1, there is an abelian surface $A$ of conductor $15379 = 7 \cdot 13^3$, which arises as the Jacobian of the hyperelliptic curve with the equation (see \cite{lmfdb})
$$
y^2 + x^3 y = x^5 - 4x^3 - x^2 + 5x - 2,
$$
which has a rational $5$-isogeny and is principally polarised. And $\overline{\sigma}_{A}^{\sss}: \G \to \GL_4(\F) = \psi \oplus \rho \oplus \psi^{-1} \chi$, where the conductor of $\psi$ is $13$ and $\rho$ is irreducible. By Serre's conjecture, $\rho = \overline{\rho}_f$ for some cuspform $f$ of weight 2 and level 91. By comparing $\tr(\overline{\sigma}_A)(\Frob_{\ell})$ and $\psi(\ell) + a_{\ell}(f) + \psi^{-1}\chi(\ell)$ for various primes $\ell$ using MAGMA, we know that $f$ is the newform of LMFDB label 91.2.a.a with coefficient field $\Q$. It corresponds to an elliptic curve $E'$ with LMFDB label 91.a1 and equation 
$$
y^2+y=x^3+x.
$$
As one can check from LMFDB that $5$ is not a congruence prime for $f$, we have $R_{\rho} = \Z_p$ (Remark \ref{R_i = O}). Please see Table 2 for the subset of surfaces $A$ in Table 1 such that $R_{\rho} = \Z_p$ and their corresponding modular forms $f$.

\begin{table}[h!]{\label{Table 2}}
    \centering
    \begin{tabular}{ |c|c|c|c| }
        \hline
        \hline
        $N$ & LMFDB Label of $A$ & LMFDB Label of $f$ & Coefficient Field of $f$\\
        \hline
        13351 & 13351.a.173563.1 & 79.2.a.a & $\Q$\\
        15379 & 15379.a.107653.1 & 91.2.a.a & $\Q$\\ 
        26112 &  26112.a.26112.1 & 102.2.a.c & $\Q$\\ 
        45568 & 45568.a.45568.1 & 178.2.a.b & $\Q$\\         
        191607 & 191607.b.191607.1 & 663.2.a.a & $\Q$\\
        \hline
        \end{tabular}
    \caption{Examples of abelian surfaces with $R_{\rho} = \Z_p$}
\end{table}

\begin{rmk}
Though all surfaces in Table 2 have coefficient field $\Q$, it is only needed that the coefficient field $K$ of the surface has a prime $v \mid p$ such that $K_{v} \cong \Q_p$.
\end{rmk}

\subsubsection{\textsf{Selmer group conditions}}
We further identify the conditions under which the Selmer group conditions in Theorem \ref{R is DVR} hold. First, we have the following Proposition guaranteeing condition (1) using certain Bernoulli numbers.

\begin{prop}
Let $\eta(\tpsi^{-2},\Sigma) = B_2(\tpsi^{-2}) \cdot \prod_{\ell \in \Sigma \setminus \{p\}} (1 - \tpsi^{-2}(\ell) \ell^{2})$. Then 
$$
\# H^1_{\Sigma}(\Q, E \otimes \mO (\Tilde{\psi}^{2}\epsilon^{-1})) \le \# \mO/\eta(\tpsi^{-2},\Sigma).
$$
Particularly, if $\text{val}_p(\eta(\tpsi^{-2},\Sigma)) = 0$, then $H^1_{\Sigma}(\Q, \psi^{2}\chi^{-1}) = 0$.
\end{prop}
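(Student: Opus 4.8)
The plan is to read the module in the statement as $W:=(E/\mO)(\tpsi^{2}\epsilon^{-1})$, a negative Tate twist of the even Dirichlet character $\tpsi^{2}$, and to bound its relaxed Selmer group in two steps: first control the Bloch--Kato Selmer group $H^{1}_{f}(\Q,W)$ (which here is the Selmer group relaxed at no bad prime, i.e. $\Sigma=\Sigma_{p}=\{p\}$) by a cyclotomic Iwasawa-theoretic input, and then measure the defect between $H^{1}_{f}$ and $H^{1}_{\Sigma}$ by the local Euler factors at the primes of $\Sigma$ away from $p$. Recall that we have normalized $\psi$ to be unramified at $p$; since $\chi|_{I_{p}}$ is nontrivial while $\psi^{2}|_{I_{p}}$ is trivial, $\psi^{2}\chi^{-1}\neq 1$, hence $W^{G_{\Sigma}}=0$, which is exactly what will let us invoke Lemma \ref{torsion Selmer is Selmer torsion} at the end.

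For the first step I would note that complex conjugation acts on $W$ by $-1$ (as $\tpsi^{2}(c)\epsilon^{-1}(c)=-1$, regardless of the parity of $\psi$), so $W$ lies in the ``minus'' part to which the Iwasawa main conjecture over $\Q$ of Mazur--Wiles applies; for the bound only the Euler-system divisibility is needed, which is available unconditionally (cf.\ \cite{rubin2014euler}). Specializing the cyclotomic main conjecture at the point corresponding to the twist by $\epsilon^{-1}$ and using the interpolation formula for the $p$-adic $L$-function attached to $\tpsi^{-2}$, the relevant special value is the generalized Bernoulli number $B_{2}(\tpsi^{-2})$ up to the removed $p$-Euler factor $1-\tpsi^{-2}(p)p$, which is $\equiv 1\pmod{p}$ and hence a unit because $p>3$ and $\psi$ is unramified at $p$. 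This gives $\#H^{1}_{f}(\Q,W)\le \#\mO/B_{2}(\tpsi^{-2})$, a finite quantity since $p\nmid\mathrm{cond}(\psi)$ and $p>3$ make the Bernoulli number $p$-integral. In practice I would phrase this exactly as, and cite, the analogous bounds in \cite{berger2013deformation} and \cite{berger2020deformations}.

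For the second step I would use that $H^{1}_{f}(\Q,W)\subseteq H^{1}_{\Sigma}(\Q,W)$ and bound the index by local Euler factors. Writing $V=E(\tpsi^{2}\epsilon^{-1})$ and $T^{*}=\Hom_{\mO}(T,\mO(1))=\mO(\tpsi^{-2}\epsilon^{2})$, one has $P_{\ell}(V^{*},1)=1-\tpsi^{-2}(\ell)\ell^{2}$ for every $\ell\in\Sigma\setminus\{p\}$, with the convention $\tpsi^{-2}(\ell)=0$ when $\ell\mid\mathrm{cond}(\psi)$ (there $(V^{*})^{I_{\ell}}=0$ and the factor is $1$). Moreover $W^{I_{\ell}}$ is divisible for each such $\ell$, so the Tamagawa factors $\mathrm{Tam}^{0}_{\ell}(T^{*})$ all equal $1$, and the local computation underlying Lemma \ref{two selmer groups} (cf.\ \cite{fontaine1992Lvalues}) shows that $\#H^{1}_{\Sigma}(\Q,W)/\#H^{1}_{f}(\Q,W)$ divides $\prod_{\ell\in\Sigma\setminus\{p\}}\#\bigl(\mO/(1-\tpsi^{-2}(\ell)\ell^{2})\bigr)$. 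Combining with the first step and using that $\mO$ is a discrete valuation ring, $\#H^{1}_{\Sigma}(\Q,W)\le \#\bigl(\mO/(B_{2}(\tpsi^{-2})\prod_{\ell\in\Sigma\setminus\{p\}}(1-\tpsi^{-2}(\ell)\ell^{2}))\bigr)=\#\mO/\eta(\tpsi^{-2},\Sigma)$, which is the first assertion. For the ``particularly'' clause: if $\val_{p}(\eta(\tpsi^{-2},\Sigma))=0$, then this bound forces $H^{1}_{\Sigma}(\Q,W)=0$; since $W^{G_{\Sigma}}=0$, Lemma \ref{torsion Selmer is Selmer torsion} gives $H^{1}_{\Sigma}(\Q,W[\varpi])=H^{1}_{\Sigma}(\Q,W)[\varpi]=0$, and $W[\varpi]\cong\psi^{2}\chi^{-1}$ as an $\F[G_{\Sigma}]$-module, so $H^{1}_{\Sigma}(\Q,\psi^{2}\chi^{-1})=0$.

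The main obstacle is entirely in the first step; the rest is bookkeeping. The real input is the cyclotomic Iwasawa main conjecture (or just its Euler-system upper bound), together with matching the analytic side against $B_{2}(\tpsi^{-2})$ in precisely the normalization of the statement --- getting the correct branch of the $p$-adic $L$-function, the Teichm\"uller twist hidden inside $\tpsi$, and the missing $p$-Euler factor right so that the value comes out as the stated Bernoulli number rather than a near-miss. The cleanest route, which I would take, is to reduce literally to the corresponding propositions in \cite{berger2013deformation} and \cite{berger2020deformations}; the local Euler-factor comparison of the second step is routine given Lemma \ref{two selmer groups}.
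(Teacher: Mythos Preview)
Your proposal is correct and is exactly the argument underlying the result the paper invokes: the paper's proof is the single line ``It follows from Proposition 5.7 in \cite{berger2019modularity}'', and that proposition is proved by precisely the two-step mechanism you describe (Mazur--Wiles/Euler-system bound on $H^{1}_{f}$ via $B_{2}(\tpsi^{-2})$, followed by the local Euler-factor comparison to pass to $H^{1}_{\Sigma}$). Your citations to \cite{berger2013deformation} and \cite{berger2020deformations} are close relatives; the paper happens to point to \cite{berger2019modularity} instead, but the content is the same.
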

\begin{proof}
It follows from Proposition 5.7 in \cite{berger2019modularity}.
\end{proof}


As far as conditions (2)(3) are concerned, we can relate $H^{1}_{\Sigma}(\Q, \cdot)$ to $H^{1}_{f}(\Q, \cdot)$, which conjecturally are bounded by $L$-values. See Remark \ref{Selmer and L} for more details. Let $w_{f,\ell} \in \{-1,1\}$ be the eigenvalue of the Atkin-Lehner involution at $\ell$ corresponding to $f$. We have the following Lemma identifying $H^{1}_{\Sigma}(\Q, \cdot)$ with $H^{1}_{f}(\Q, \cdot)$ when the level $N_f$ is square-free.

\begin{lem}{\label{squarefree surface selmer groups}}
Suppose $N_f$ is a square-free integer. Suppose $\ell \not \equiv 1 (\mmod p)$ for every $\ell \mid N_f$. Suppose $\tpsi$ is either tamely ramified at $\ell$, or unramified at $\ell$ and $p \nmid 1 + w_{f,\ell} \ell^{2} \Tilde{\psi}^{-1}(\ell)$, for all $\ell \mid N_f$, then 
\begin{equation}{\label{first Sigma = f}}
    H^1_{\Sigma}(\Q,\rho_f(\Tilde{\psi}\epsilon^{-1})) \otimes E/\mO) = H^1_{f}(\Q,\rho_f(\Tilde{\psi}\epsilon^{-1})) \otimes E/\mO)
\end{equation}
\end{lem}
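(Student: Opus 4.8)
The plan is to verify the local conditions defining the two Selmer groups agree at every place of $\Sigma$, and then invoke Lemma \ref{two selmer groups} (which is the statement borrowed from \cite{berger2013deformation}). Concretely, set $V = \rho_f(\tpsi\epsilon^{-1}) \otimes E$, let $T$ be a $G_{\Sigma}$-stable $\mO$-lattice, and $W = V/T$; the two Selmer groups $H^1_\Sigma$ and $H^1_f$ of $W$ differ only in that $H^1_\Sigma$ imposes no condition at the finite places $\ell \in \Sigma$ with $\ell \nmid p$, whereas $H^1_f$ imposes the unramified (minimally ramified) condition there. So the task reduces to checking, for each $\ell \mid N_f$, the two numerical hypotheses of Lemma \ref{two selmer groups}: that $P_\ell(V^*, 1) \in \mO^\times$ and that the Tamagawa factor $\mathrm{Tam}^0_\ell(T^*) = 1$; recall $V^* = \Hom(V, E(1))$. (The places $\ell \in \Sigma \setminus \Sigma_p$ not dividing $N_f$ are places of good reduction for $\rho_f$ where the twist $\tpsi\epsilon^{-1}$ may ramify only through $\tpsi$; if $\Sigma$ is taken to be exactly $\{p\} \cup \{\ell : \ell \mid N\}$ one should also handle the prime $7$ in the running example, but the argument is the same — one checks $\tpsi$ is the only source of ramification and the Euler factor is a $p$-adic unit by the hypothesis $\ell \not\equiv 1 \pmod p$.)

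The heart of the argument is the case analysis at $\ell \mid N_f$ encoded in the hypotheses. Since $N_f$ is square-free, $\rho_f$ is Steinberg (up to an unramified twist) at each $\ell \mid N_f$, so $\rho_f|_{G_{\Q_\ell}}$ has a one-dimensional unramified subrepresentation on which $\Frob_\ell$ acts by a unit $\alpha_\ell$ with $\alpha_\ell^2 = \ell^{-1}$ (equivalently the Atkin--Lehner sign $w_{f,\ell}$ records $\alpha_\ell \ell^{1/2}$ up to the conventions), and $(\rho_f)^{I_\ell}$ is one-dimensional. One then splits into the two cases of the Lemma's hypothesis. If $\tpsi$ is tamely ramified at $\ell$: then $V^*$ is (after the various twists) ramified at $\ell$ through the tame character $\tpsi$, and because $\ell \not\equiv 1 \pmod p$ the prime-to-$p$ tame character $\tpsi$ is nontrivial modulo $\varpi$ only in a way that forces $(V^*)^{I_\ell} = 0$ or at least makes $\det(1 - \Frob_\ell \mid (V^*)^{I_\ell})$ a unit — so $P_\ell(V^*, 1) \in \mO^\times$ — and moreover $W^{I_\ell}$ is divisible (the inertia-invariants of the torsion module vanish or are the full divisible piece), whence $\mathrm{Tam}^0_\ell(T^*) = 1$ by the last sentence of Lemma \ref{two selmer groups}. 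If instead $\tpsi$ is unramified at $\ell$ and $p \nmid 1 + w_{f,\ell}\ell^2\tpsi^{-1}(\ell)$: here $V^*|_{G_{\Q_\ell}}$ is an unramified twist of Steinberg, $(V^*)^{I_\ell}$ is one-dimensional, $\Frob_\ell$ acts on it by the explicit eigenvalue coming from the Steinberg subquotient twisted by $\tpsi^{-1}\epsilon$ — which is precisely a scalar whose value makes $P_\ell(V^*,1) = 1 - (\text{that scalar})$, and the hypothesis $p \nmid 1 + w_{f,\ell}\ell^2\tpsi^{-1}(\ell)$ is exactly the statement that this is a $p$-adic unit; one then checks $W^{I_\ell}$ is divisible (again because the unit Euler factor forces the relevant $\mO$-cotorsion module to be divisible, e.g. via the snake lemma applied to multiplication by $\varpi$ on $0 \to W^{I_\ell} \to W \xrightarrow{\Frob_\ell - 1} \cdots$), giving $\mathrm{Tam}^0_\ell(T^*) = 1$.

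Having verified conditions (1) and (2) of Lemma \ref{two selmer groups} at every $v \in \Sigma$ with $v \nmid p$, the Lemma yields $H^1_\Sigma(\Q, W) = H^1_f(\Q, W)$ with $W = \rho_f(\tpsi\epsilon^{-1}) \otimes E/\mO$, which is exactly \eqref{first Sigma = f}. The only subtlety worth flagging — and the step I expect to be the main obstacle — is the bookkeeping of the twist: one must be careful that the self-duality pairing used in forming $V^* = \Hom(V, E(1))$ interacts correctly with the Atkin--Lehner eigenvalue, i.e. that the quantity $1 + w_{f,\ell}\ell^2\tpsi^{-1}(\ell)$ really is (up to a $p$-adic unit) the value $P_\ell(V^*,1)$, since $\rho_f$ is essentially self-dual (with similitude character $\epsilon^{-1}$, or $\epsilon$ depending on normalization) and so $\rho_f^* \cong \rho_f \otimes \epsilon$, making $V^*$ a twist of $V$ itself; tracking the exact power of $\ell$ and the sign through $V \mapsto V^*$ and through the twist by $\tpsi\epsilon^{-1}$ is the delicate computation. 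Once that identification is pinned down the divisibility of $W^{I_\ell}$ is a formal consequence of the Euler factor being a unit, and the proof concludes.
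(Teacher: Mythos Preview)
Your approach is essentially the same as the paper's: reduce to Lemma~\ref{two selmer groups}, use the Steinberg shape $\rho_f|_{G_{\Q_\ell}} \cong \begin{pmatrix} \gamma\epsilon & * \\ & \gamma \end{pmatrix}$ at $\ell \mid N_f$, and do the case split on $\tpsi$ to compute $P_\ell(V^*,1)$. The paper makes the bookkeeping you flag explicit: with $V^* = \rho_f(\tpsi^{-1}\epsilon)$ and $\gamma(\Frob_\ell) = a_\ell(f) = -w_{f,\ell}$ (Atkin--Lehner), one has $(V^*)^{I_\ell} = \gamma\tpsi^{-1}\epsilon^2$ in the unramified case and $P_\ell(V^*,1) = 1 + w_{f,\ell}\ell^2\tpsi^{-1}(\ell)$ on the nose, so no delicate sign-tracking is actually needed.

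The one place your sketch goes astray is the divisibility of $W^{I_\ell}$. You assert this is ``a formal consequence of the Euler factor being a unit'' and gesture at a snake-lemma argument involving $\Frob_\ell - 1$; but $W^{I_\ell}$ is governed by the \emph{inertia} action, not by Frobenius, and the Euler factor $P_\ell(V^*,1)$ being a unit does not by itself force $W^{I_\ell}$ divisible. The paper instead uses the hypothesis $\ell \not\equiv 1 \pmod p$ to conclude that $\rho_f$ is \emph{tamely} ramified at $\ell$ (the wild inertia image would have to be a $p$-group inside a pro-$\ell$ group), and then invokes the argument of \cite[Proposition~2.5]{berger2020deformations} to get divisibility of $W^{I_\ell}$ in both cases of $\tpsi$. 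So the hypothesis $\ell \not\equiv 1 \pmod p$ is doing real work for the Tamagawa condition, not just for the Euler factor in the tamely-ramified-$\tpsi$ case.
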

\begin{proof}
Let $V = \rho_f(\Tilde{\psi}\epsilon^{-1})$. By Lemma \ref{two selmer groups}, it suffices to check that $P_{\ell}(V^*,1) \in \mO^{*}$ and $W^{I_{v}}$ is divisible where $W = V/T$ and $T \subset V$ is a $\G$-stable $\mO$ lattice.

As $N_f$ is square-free, we know that for $\ell \mid N_f$,
$$
\rho_f|_{G_{\Q_{\ell}}} \cong_{E} 
\begin{pmatrix}
    \gamma \epsilon & *\\
     & \gamma
\end{pmatrix}  
$$
and $\rho_f|_{I_{\ell}} \ne 1$, where $\gamma: G_{\Q_{\ell}} \to E^{\times}$ is an unramified character sending $\Frob_{\ell}$ to $a_{\ell}(f)$ (cf. \cite[~Theorem 3.26(3)(b)]{hida2000modular}). 

$V^* = V^{\vee}(1) = \rho_f(\Tilde{\psi}^{-1} \epsilon)$. Suppose $\tpsi$ is unramified at $\ell$, then $(V^{*})^{I_{\ell}} = \gamma \Tilde{\psi}^{-1} \epsilon^2$ and 
$$
P_{\ell} (V^*,1) = \det (1 - \Frob_{\ell}|_{(V^*)^{I_{\ell}}}) = 1 - a_{\ell}(f) \ell^{2} \Tilde{\psi}^{-1}(\ell).
$$
Theorem 3 in \cite{Atkin1970HeckeOO} shows that $a_{\ell} (f) = -w_{f,l}$ with Hecke action normalized as in  \cite[~Theorem 4.6.17(2)]{miyake2006modular}. Then by assumption $p \nmid 1 + w_{f,\ell} \ell^{2} \Tilde{\psi}^{-1}(\ell)$, $P_{\ell}(V^*,1) \in \mO^{*}$. 

Suppose $\tpsi$ is tamely ramfied at $\ell$, then $(V^{*})^{I_{\ell}}$ is trivial, and $P_{\ell}(V^*,1) = 1 \in \mO^{*}$ as well. Since $\ell \not \equiv 1 (\mmod p)$, $\rho_f$ is tamely ramified at $\ell$. Thus, in either case of $\tpsi$'s ramification, following a similar proof as in \cite[~Proposition 2.5]{berger2020deformations}, we get that $W^{I_{\ell}}$ is divisible and \eqref{first Sigma = f} holds. 
\end{proof}

However, generally, $N_f$ is not guaranteed to be square-free. In this case we prove the following Proposition relating $H^1_{\Sigma}(\Q, \cdot)$ and $H^1_{f}(\Q, \cdot)$ using the local components of the newform $f$.

\begin{prop}{\label{general Selmer group relation}}
Suppose for any $\ell \mid N_f$, we have $\ell \not \equiv 1 (\mmod p)$ and one of the following holds:
\begin{enumerate}
    \item $\val_{\ell}(N_f) = 1$. $\tpsi$ is either tamely ramified at $\ell$, or unramified at $\ell$ and $p \nmid 1 + w_{f,\ell} \ell^{2} \Tilde{\psi}^{-1}(\ell)$.
    \item $\val_{\ell}(N_f) > 1$, $a_{\ell}(f) \ne 0$ and $\tpsi$ is tamely ramified at $\ell$.
\end{enumerate}
Then 
\begin{equation}{\label{first Sigma = f squarefull}}
    H^1_{\Sigma}(\Q,\rho_f(\Tilde{\psi}\epsilon^{-1})) \otimes E/\mO) = H^1_{f}(\Q,\rho_f(\Tilde{\psi}\epsilon^{-1})) \otimes E/\mO).
\end{equation}

\end{prop}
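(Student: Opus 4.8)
The plan is to deduce \eqref{first Sigma = f squarefull} from Lemma \ref{two selmer groups}, applied to $V = \rho_f(\tpsi\epsilon^{-1})$, a $G_\Sigma$-stable lattice $T \subset V$, $W = V/T$, and $V^* = V^\vee(1) = \rho_f(\tpsi^{-1}\epsilon)$ with $W^* = V^*/T^*$. It then suffices to check, for every $\ell \in \Sigma$ with $\ell \ne p$, that $P_\ell(V^*,1) \in \mO^\times$ and $\text{Tam}^0_\ell(T^*) = 1$; I would reduce the latter to the assertion $(W^*)^{G_{\Q_\ell}} = 0$, since this forces $H^1(\Q_\ell, T^*)_{\text{tor}} = 0$ once $(V^*)^{I_\ell} = 0$, while $|P_\ell(V^*,1)|_p = 1$. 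The primes $\ell \mid N$ with $\ell \nmid N_f$ (where $\rho_f$ is unramified and $\tpsi$ has order prime to $p$) and the primes $\ell \mid N_f$ with $\val_\ell(N_f) = 1$ (the Steinberg-type case (1), where $\rho_f|_{G_{\Q_\ell}} \cong \left(\begin{smallmatrix}\gamma\epsilon & *\\ & \gamma\end{smallmatrix}\right)$ with $\gamma$ unramified) are handled exactly as in the proof of Lemma \ref{squarefree surface selmer groups}.

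The new input is case (2): $\ell \mid N_f$, $\val_\ell(N_f) > 1$, $a_\ell(f) \ne 0$, and $\tpsi$ tamely ramified at $\ell$. First I would use Atkin--Lehner--Li theory together with local--global compatibility to pin down $\rho_f|_{G_{\Q_\ell}}$: the condition $a_\ell(f) \ne 0$ excludes the supercuspidal and ramified-special local types (for which $a_\ell(f) = 0$), so $\pi_\ell$ is a ramified principal series $\pi(\mu_1, \mu_2)$ with $\mu_1$ unramified, and hence $\rho_f|_{G_{\Q_\ell}}$ is the semisimple sum $\eta_1 \oplus \eta_2$ of two characters, with $\eta_1$ unramified, $\eta_1(\Frob_\ell) = a_\ell(f)$, and $\eta_2$ ramified with $\eta_2|_{I_\ell}$ of conductor $\ell^{\val_\ell(N_f)} \ge \ell^2$.

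Granting this, the local checks are routine. Restricting $V^* = \rho_f(\tpsi^{-1}\epsilon)$ to $G_{\Q_\ell}$ gives the characters $\eta_1\tpsi^{-1}\epsilon$ and $\eta_2\tpsi^{-1}\epsilon$. Since $\eta_1, \epsilon$ are unramified at $\ell$ while $\tpsi$ is tamely ramified there, the first is tamely and nontrivially ramified on $I_\ell$; since $\eta_2|_{I_\ell}$ has conductor $\ge \ell^2$ and $\tpsi^{-1}|_{I_\ell}$ is tame, the second remains wildly ramified. Hence $(V^*)^{I_\ell} = 0$ and $P_\ell(V^*, 1) = 1 \in \mO^\times$. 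For the Tamagawa factor, $\tpsi$ has order prime to $p$ and $\ell \not\equiv 1 \pmod{p}$, so $\tpsi^{-1}|_{I_\ell}$ has order $m > 1$ prime to $p$; a generator of its image is a root of unity $\zeta$ with $\zeta - 1 \in \mO^\times$, which annihilates the $I_\ell$-invariants of $E/\mO$ under either of $\eta_1\tpsi^{-1}\epsilon$ and $\eta_2\tpsi^{-1}\epsilon$. Thus $(W^*)^{G_{\Q_\ell}} = 0$ and $\text{Tam}^0_\ell(T^*) = 1$. Assembling all $\ell \in \Sigma \setminus \Sigma_p$, Lemma \ref{two selmer groups} yields \eqref{first Sigma = f squarefull}.

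The step I expect to be the main obstacle is the automorphic input in case (2): one must make sure, in the correct arithmetic normalization of local Langlands, that $a_\ell(f) \ne 0$ genuinely decomposes $\rho_f|_{G_{\Q_\ell}}$ as a sum of two characters with the unramified one recording $a_\ell(f)$, and that the conductor of $\eta_2|_{I_\ell}$ is exactly $\ell^{\val_\ell(N_f)}$, so that twisting by the tame character $\tpsi^{-1}$ cannot accidentally make it unramified on $I_\ell$. Once this is secured, the cohomological bookkeeping mirrors the square-free computation in Lemma \ref{squarefree surface selmer groups} and introduces no new difficulty.
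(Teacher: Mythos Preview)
Your approach matches the paper's: both invoke Lemma~\ref{two selmer groups}, defer case~(1) to Lemma~\ref{squarefree surface selmer groups}, and in case~(2) use that $a_\ell(f)\ne 0$ together with $\val_\ell(N_f)>1$ forces $\rho_f|_{G_{\Q_\ell}}\cong\eta_1\oplus\eta_2$ with $\eta_1$ unramified and $\eta_2$ ramified (the paper cites Loeffler--Weinstein for this). The only variation is that the paper verifies the Tamagawa condition through the divisibility criterion on $W^{I_\ell}$ in the last clause of Lemma~\ref{two selmer groups}, whereas you show $(W^*)^{G_{\Q_\ell}}=0$ directly to kill $H^1(\Q_\ell,T^*)_{\mathrm{tor}}$; both routes are fine.

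One step needs tightening. For the summand $(E/\mO)(\eta_2\tpsi^{-1}\epsilon)$, the inertia element $g$ with $\tpsi^{-1}(g)=\zeta$ acts by $\eta_2(g)\zeta$, not by $\zeta$ alone, and you have no control over $\eta_2(g)$; in particular nothing prevents $\eta_2(g)=\zeta^{-1}$. The correct argument uses the wild inertia subgroup $P_\ell$: there $\tpsi$ and $\epsilon$ are trivial while $\eta_2|_{P_\ell}$ is nontrivial (since $\operatorname{cond}(\eta_2)=\ell^{\val_\ell(N_f)}\ge\ell^2$), so some $h\in P_\ell$ acts by a nontrivial $\ell$-power root of unity $\zeta'$, and $\zeta'-1\in\mO^\times$ because $\ell\ne p$. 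With this fix your argument goes through.
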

\begin{proof}
As in Lemma \ref{squarefree surface selmer groups}, we will verify that $P_{\ell}(V^*,1) \in \mO^{*}$ and $W^{I_{v}}$ is divisible for all $\ell \mid N_f$. The case where $\val_{\ell}(N_f) = 1$ was discussed in Lemma \ref{squarefree surface selmer groups}. If $a_{\ell}(f) \ne 0$ and $\val_{\ell}(N_f) > 1$, then $\rho_f|_{G_{\Q_{\ell}}}$ is isomorphic
to the principal series $\eta_1 \oplus \eta_2$ (cf. Proposition 2.8 and Section 5 in \cite{loeffler2012computation}), where $\eta_1$ is unramified and $\eta_2$ is tamely ramified as $\ell \not \equiv 1 (\mmod p)$. Let $V = \rho_f(\Tilde{\psi}\epsilon^{-1})$. 
As $\tpsi$ is ramified, $(V^{*})^{I_{\ell}}$ is trivial, and $W^{I_{\ell}}$ is trivial and divisible by local class field theory. 
\end{proof}

\begin{rmk}{\label{Selmer and L}}
The $\lambda$-part of Bloch--Kato conjecture predicts that for a $\G$-module $V$ valued in $E$ with ring of integer $\mO$ and uniformizer $\varpi$, 
$$
\# H^1_{f}(\Q,V \otimes_{\mO} E/\mO) = (\# \mO/\varpi \mO)^m,
$$
where $m = \text{val}_{\varpi} (L^{\text{alg}} (V^{\vee}(1),0))$ and $L^{\text{alg}}(\cdot, 0)$ are certain normalized algebraic special $L$-values (see Corollary 7.8 in \cite{klosin2009congruences} for details). One could check the bounds on the Selmer groups through numerical bounds on certain $L$-values using Proposition \ref{general Selmer group relation}. For more details of the $\lambda$-part of Bloch--Kato conjecture, see Conjecture 2.14 in \cite{diamond2004tamagawa} and Conjecture 9.14 in \cite{klosin2009congruences}.
\end{rmk} 

\section{Ikeda Lifts}\label{Ikeda lifts}
To establish Theorem \ref{R=T}, one usually needs to provide a lower bound on  $\# \mathbf{T}/J$ for some congruence ideal $J \subset \mathbf{T}$, and such a lower bound can be achieved by the existence of congruences between automorphic forms.  In this section, provided such a congruence, we apply Theorem \ref{R=T} to establish an $R = \mathbf{T}$ theorem. Specifically, in Theorem 6.1 in \cite{brown2020congruence}, the authors provided sufficient conditions for an eigenform $f'$ defined on the unitary group U$(n,n)(\adele_{\Q})$ of full-level to be orthogonal and congruent to an Ikeda lift $I_{\phi}$ on the same group, and constructed such $f'$. When $n = 3$, the residual Galois representation attached to $f'$ is conjectured to have three Jordan-H\"{o}lder factors and is of the form \eqref{reducible} by Proposition \ref{Lattice}. Furthermore, we identify sufficient special L-value conditions for the $R = \mathbf{T}$ theorem in place of the Selmer group conditions in Theorem \ref{R=T}, assuming $\lambda$-part of the Bloch-Kato conjecture.

\subsection{Hermitian Automorphic forms}
Let $K = \Q(\sqrt{-D_K})$ be an imaginary quadratic extension of $\Q$ with discriminant $D_K$ such that the conditions related to $K$ in Theorem 6.1 in \cite{brown2020congruence} are satisfied (see Lemma \ref{Existance of congruence} for further discussion on the technical conditions). Let $\chi_K$ denote the quadratic character associated with the imaginary quadratic extension $K/\Q$. Let $\mathbf{G}_m$ be the multiplicative group scheme. Associated to $K$, there is the unitary similitude group scheme over $\Z$:
$$
\text{GU}(n,n) = \{A \in \text{Res}_{\mO_{K}/\Z} \GL_{2n/\mO_K} : A J_n A^* = \mu_n(A) J_n \}
$$
where $J_n = \begin{pmatrix}
    & - I_n\\
    I_n &
\end{pmatrix}$ and $1_n$ is the $n \times n$ identity matrix. Here $\mu_n: \text{Res}_{\mO_{K}/\Z} \GL_{2n/\mO_K} \to \mathbf{G}_{m/\Z}$ is a morphism of $\Z$-group schemes and is the similitude factor. We define $U(n,n)$ by the following exact sequence:
$$
1 \rightarrow \text{U}(n,n) \rightarrow \text{GU}(n,n) \xrightarrow{\mu_n} \GL_1 \rightarrow 1.
$$

Let $\mathcal{M}_{n,k''}(\hat{\Z})$ be the space of automorphic forms on U$(n,n)$ of weight $k''$ and full level and let $\mathcal{S}_{n,k''}(\hat{\Z})$ denote the space of cusp forms in $\mathcal{M}_{n,k''}(\hat{\Z})$. For a more detailed treatment, see \cite{brown2020congruence} Section 2 or \cite{Eischen22}.

For a $2n \times 2n$ matrix $g$, we will write $a_{g}, b_{g}, c_{g}, d_{g}$ to be the $n \times n$ matrices defined by 
$$
g = 
\begin{pmatrix}
    a_{g} & b_{g}\\
    c_{g} & d_{g}
\end{pmatrix}.
$$

Let $\mathfrak{N} \subset \Z$ be an ideal. For $v \in \f$ we set 
\begin{equation}
    \mathcal{K}_{0,n,v}(\mathfrak{N}) = \{g \in G_n(F_v): a_g,b_g, d_g \in \text{Mat}_n(\mO_{K,v}), c_g \in \text{Mat}_n(\mathfrak{N} \mO_{K,v}) \}
\end{equation}

Denote the set of finite places of $\Q$ as $\f$. Let $\adele_{\Q}$ denote the adeles of $\Q$ and $\adele_{\Q,\f}$ is the finite part of $\adele_{\Q}$. 
Let $F_{\textbf{a}} = \mathbf{R} \otimes_{\Q} F$ and $\mathbf{R}^{\textbf{a}} = \prod_{v \in \textbf{a}} \mathbf{R}$ via the map $a \to (\sigma(a))_{\sigma \in \textbf{a}}$.

Set
$$
\mathcal{K}^{+}_{0,n,\infty} = 
\bigg\{ 
\begin{pmatrix}
    A & B \\
    -B & A
\end{pmatrix} 
\in G_n(\mathbf{R}): A,B \in \GL_n(\mathbf{C}), A A^{*} + B B^{*} = 1_n, AB^{*} = BA^{*}
\bigg\}
$$
and $\mathcal{K}_{0,n,\infty}$ to be the subsgroup of $G_n(\mathbf{R})$ generated by $\mathcal{K}^{+}_{0,n,\infty}$ and $J_n$, where $J_n = \begin{pmatrix}
    & - 1_n\\
    1_n &
\end{pmatrix}$.

Define 
\begin{equation}
    \textbf{H}_n = \{Z \in \text{Mat}_n(\mathbf{C}): -i 1_n (Z - Z^{*}) > 0\}
\end{equation}
where $Z^{*} = \overline{Z}^t$ and bar denotes the action of the nontrivial element of $\Gal(K/\Q)$.

Let $g_{\infty} \in G_n(\mathbf{R})$. As $g_{\infty}$ is an $2n \times 2n$ matrix, we will write $a_{g_{\infty}}, b_{g_{\infty}}, c_{g_{\infty}}, d_{g_{\infty}}$ to be the $n \times n$ matrices defined by 
$$
g_{\infty} = 
\begin{pmatrix}
    a_{g_{\infty}} & b_{g_{\infty}}\\
    c_{g_{\infty}} & d_{g_{\infty}}
\end{pmatrix}
$$
and $Z \in \upperhalf_n$, set $j(g_{\infty},Z) = \det(c_{g_{\infty}} Z + d_{g_{\infty}})$.

Let $\mathcal{K}$ be an open compact subsgroup of $G_n(\mathbf{A}_{\Q,\f})$. For $k \in \Z$, let $\mathcal{M}_{n,k}(\mathcal{K})$ denote the $\mathbf{C}$-space of functions $f: G_n(\adele_{\Q}) \to \mathbf{C}$ satisfying the following 
\begin{enumerate}
    \item $f(\gamma g) = f(g)$ for all $\gamma \in G_n(\Q), g \in G_n(\adele_{\Q})$,
    \item $f(g \kappa) = f(g)$ for all $\kappa \in \mathcal{K}, g \in G_n(\adele_{\Q})$,
    \item $f(g u) = j(u, i1_n)^{-k} f(g)$ for all $g \in G_n(\adele_{\Q}), u \in \mathcal{K}_{0,n,\infty}$,
    \item $f_c(Z) = j(g_{\infty}, i1_n)^k f(g_{\infty} c)$ is a holomorphic function of $Z = g_{\infty} i 1_n \in \mathbf{H}_n$ for every $c \in G_n(\adele_{\Q,\f})$. 
\end{enumerate}
Let $\mathcal{S}_{n,k}(\mathcal{K})$ denote the space of cusp forms in $\mathcal{M}_{n,k}(\mathcal{K})$.

\subsection{Ikeda lifts}



From now on let $n = 3$. Let $p \nmid 2 D_K, p > 2k'+2, p \nmid \# \text{Cl}_K$ be a prime. Note by assumption, $p$ is unramified in $K$.

Let $\phi$ be a newform in $S_{2k'}(1)$, where $S_{2k'}(1)$ is the space of (classical) elliptic cusp forms of weight $2k'$, level $1$, such that no newform $S_{2k'}(1)$ that $\phi \equiv h (\mmod \varpi)$ and $\phi$ is ordinary at $p$. Moreover, assuming the $L$-value conditions related to $\phi$ such that $\val_{\varpi}(\eta_{\phi}) \ge 0$ and $\val_{\varpi}(\mathcal{U}) = 0$ in Theorem 6.1 in \cite{brown2020congruence}.
Let $\rho_{\phi}: G_{\Q} \to \GL_2(E)$ be the Galois representation associated to $\phi$. By choosing some $G_{\Sigma}$-invariant lattice, we can assume $\rho_{\phi}: G_{\Q} \to \GL_2(\mO)$. We further assume the residual Galois representation $\overline{\rho}_{\phi}|_{G_K}: G_{K} \to \GL_2(\overline{\F})$ is absolutely irreducible. 

Ikeda has shown that there exists $I_{\phi} \in \mathcal{S}_{3,2k'+2}(\hat{\Z})$ (the Ikeda lift of $\phi$), a certain eigenform whose $L$-function is closely related to the $L$-function of $\phi$. It conjecturally satisfies that its associated Galois representation  $\rho_{I_{\phi}}: G_K \to \GL_{6}(\overline{\Q}_p)$ is
$$
\rho_{I_{\phi}} \cong \epsilon^{2-k'} \bigoplus_{i = 0}^{2} \rho_{\phi}(i)|_{G_{K}},
$$
cf. \cite{brown2020congruence} Conjecture 7.1.

\subsection{Congruences}

Let
$$
L_1 = \frac{L(2k'+1,\text{Sym}^2 \phi \otimes \chi_K)}{\pi^{2k'+3} \Omega_{\phi}^{+} \Omega_{\phi}^{-}} \in \overline{\Q}, \qquad 
L_2 = \frac{L(2k'+2,\text{Sym}^2 \phi)}{\pi^{2k'+5} \Omega_{\phi}^{+} \Omega_{\phi}^{-}} \in \overline{\Q},
$$
where $L(s,\text{Sym}^2 \phi \otimes \psi))$ is the symmetric square $L$-function for a Dirichlet character $\psi$ (cf. \cite[p.~18]{brown2020congruence}) and $\Omega_{\phi}^{\pm} \in \mathbf{C}^{\times}$ are the integral periods associated with $\phi$ (cf. \cite[p.~24]{brown2020congruence}, \cite{vatsal1999canonical}). Let $v_i = \text{val}_{\varpi} (\# \mO/L_i)$ for $i = 1,2$. The following Lemma states that if the conditions on \(p, \phi, K\) above are satisfied, then the value $v_1, v_2$ controls the existence of an $f'$ such that $f' \equiv I_{\phi} (\mmod \varpi)$.

\begin{lem}{\label{Existance of congruence}}
Suppose $p \nmid 2 D_K, p > 2k'+2$, $\phi$ is ordinary at $p$, and $\overline{\rho}_{\phi}|_{G_K}: G_{K} \to \GL_2(\overline{\F})$ is absolutely irreducible as above. We also assume $\val_{\varpi}(\eta_{\phi}) \ge 0$ and the $L$-value conditions related to $\phi$ such that $\val_{\varpi}(\mathcal{U}) = 0$ (see Theorem 6.1 in \cite{brown2020congruence} for the definitions of $\mathcal{U}$ and $\eta_{\phi}$) and the conditions on $K$ there are satisfied. 

Then $v_1 + v_2 > 0$ translates to $\val_{\varpi}(\mathcal{V}) > 0$ and $\val_{\varpi}(\eta_{\phi} \mathcal{V}) > 0$ in there and there exists $f' \in \mathcal{S}_{3,2k'+2}(\hat{\Z})$ orthogonal to the space spanned by all the Ikeda lifts such that $f' \equiv I_{\phi} (\mmod \varpi^{v_1 + v_2})$ (in the sense of Definition 2.4 in \cite{brown2020congruence}). 
\end{lem}
\begin{proof}
The conditions on \(p, \phi, K\) and $v_1, v_2$ guarantee that the conditions in Theorem 6.1 in \cite{brown2020congruence} are satisfied. The conditions on $K$, including conditions on a Hecke character of $K$ and on $L$-values related to $\chi_K$, were specific to the argument of Theorem 6.1. More details can be found in Section 9 in \cite{brown2020congruence} where the authors verified these conditions by providing examples of congruences.

Then $v_1 + v_2 = \val_{\varpi}(\mathcal{V})$ as in Therorem 6.1 in \cite{brown2011cuspidality} and we have $f' \equiv I_{\phi} (\mmod \varpi^{v_1 + v_2})$.
\end{proof}

\begin{rmk}{\label{6.2}}
We expect according to the $\lambda$-part of the Bloch--Kato conjecture:
$$
v_1 = \text{val}_{\varpi}(\# \mO/L^{\text{alg}}(\text{ad}^0 \rho_{\phi}(2) \otimes \chi_K,0)) = \text{val}_{\varpi}(\# H^1_f(\Q,(\text{ad}^0 \rho_{\phi}(-1) \otimes \chi_K) \otimes E/\mO))
$$
and
$$
v_2 = \text{val}_{\varpi}(\# \mO/L^{\text{alg}}(\text{ad}^0 \rho_{\phi}(3),0)) = \text{val}_{\varpi}(\# H^1_f(\Q,(\text{ad}^0 \rho_{\phi}(-2))\otimes E/\mO)).
$$
\end{rmk}

By Conjecture 7.1 in \cite{brown2020congruence}, which is widely regarded as a known result (see Remark 7.3 in \cite{brown2020congruence}), there is an $p$-adic Galois representation $\rho_{f'}: G_{K} \to \GL_6(E)$ attached to $f'$, and $\rho_{f'}$ is unramified away from $p$ and Fontaine--Laffaille at $p$. Assume that it is also irreducible.

\subsection{$R = \mathbf{T}$ Theorem}

Let $\Sigma = \{v: v \mid p \text{ or } D_K\}$. Let $\overline{\sigma} = \overline{\rho}_{f'}$ and $R$ be the corresponding universal deformation ring defined in Section 3.


Let $\mathcal{S}^{f'}$ denote the subspace spanned by eigenforms $f' \in \mathcal{S}_{3,2k'+2}(\hat{\Z})$ orthogonal to the space spanned by all the Ikeda lifts such that $f' \equiv I_{\phi} (\mmod \varpi^{v_1 + v_2})$ and $\rho_{f'}$ is irreducible. Let $\mathbf{T}^{0}$ be the $\mO$-subalgebra of $\text{End}_{\mO}(\mathcal{S}_{3,2k'+2}(\hat{\Z}))$ generated by the local Hecke algebras away from $\Sigma$, and $\mathbf{T}^{f'}$ be the image of $\mathbf{T}^{0}$ inside $\text{End}_{\mO}(\mathcal{S}^{f'})$. Let $I_{I_{\phi}}$ be the image of $\Ann(I_{\phi}) \subset \mathbf{T}^{0}$ in $\mathbf{T}^{f'}$.


Applying Theorm \ref{R=T}, if
 \begin{enumerate}
    \item $H^{1}_{\Sigma}(K, \ad \overline{\rho}_{\phi}(-2)|_{G_{K}})  = 0$;
    \item $\dim_{k} H^{1}_{\Sigma}(K, \ad \overline{\rho}_{\phi}(1)|_{G_{K}})= 1$;
    \item $\# H^1_{\Sigma}(K, \ad \rho_{\phi}(-1)|_{G_{K}} \otimes E/\mO) \le \# \mO/\varpi^{m}$,
\end{enumerate}
where $m$ is the largest integer satisfying the above inequality, and that $\# \mathbf{T}^{f'}/I_{I_{\phi}} \ge \# \mO/\varpi^m$, then $R = \mathbf{T}^{f'}$. 

As these conditions are difficult to check by the author's experience, we propose equivalent numerical criteria on certain $L$-values as a result of the $\lambda$-part of Bloch--Kato Conjecture below.

\begin{rmk}{\label{6.3}}
By Lemma \ref{torsion Selmer is Selmer torsion}, we have 
$$
H^{1}_{\Sigma}(K, \ad \overline{\rho}_{\phi}(i)|_{G_{K}}) = H^{1}_{\Sigma}(K, \ad \rho_{\phi}(i)|_{G_{K}} \otimes E/\mO) \otimes k.
$$
Also note $\ad \rho_{\phi}(i)|_{G_{K}} \cong \ad^0 \rho_{\phi}(i)|_{G_{K}} \oplus E$, so we have  
$$
H^1_{\Sigma}(K,\ad \rho_{\phi}(i)|_{G_{K}} \otimes E/\mO) \cong H^1_{\Sigma}(K,\ad^0 \rho_{\phi}(i)|_{G_{K}} \otimes E/\mO) \oplus H^1_{\Sigma}(K,E/\mO).
$$
As $p \nmid \# \text{Cl}_{K}$ and $E/\mO$ is a pro-$p$ group, $H^1_{\Sigma}(K,E/\mO)  = 0$, and $H^1_{\Sigma}(K,\ad \rho_{\phi}(i)|_{G_{K}} \otimes E/\mO) = H^1_{\Sigma}(K,\ad^0 \rho_{\phi}(i)|_{G_{K}} \otimes E/\mO)$.
\end{rmk}

\begin{rmk}{\label{6.5}}
Using Lemma \ref{two selmer groups}, $H^1_{\Sigma}(K,\ad^0 \rho_{\phi}(i)|_{G_{K}} \otimes E/\mO) = H^1_{f}(K,\ad^0 \rho_{\phi}(i)|_{G_{K}} \otimes E/\mO)$. Recall to achieve the equality, we need $P_{v}(V^*,1) \in \mO^{*}$ and $(\ad^0 \rho_{\phi}(i)|_{G_{K}} \otimes E/\mO)^{I_{v}}$ is divisible for all places $v$ of $K$ such that $v \mid \ell$ and $\ell \mid D_K$. $\rho_{\phi}$ is unramified at $v$ so $P_{v}(V^*,1) \in \mO^{*}$. Also $(\ad^0 \rho_{\phi}(i)|_{G_{K}} \otimes E/\mO)^{I_{v}} = \ad^0 \rho_{\phi}(i)|_{G_{K}} \otimes E/\mO \cong (E/\mO)^{3}$ is divisible.
\end{rmk}

By Remarks \ref{6.3}, \ref{6.5}, under the assumption of the $\lambda$-part of Bloch--Kato Conjecture (see Remark \ref{Selmer and L} for more details), if the following conditions on $L$-values hold:
\begin{enumerate}
    \item $\text{val}_{\varpi}(\# \mO/L^{\text{alg}}(\text{ad}^0 \rho_{\phi}(3)|_{G_{K}},0)) = 0$,
    \item $\text{val}_{\varpi}(\# \mO/L^{\text{alg}}(\text{ad}^0 \rho_{\phi}|_{G_{K}},0)) = 1$,
    \item $\text{val}_{\varpi}(\# \mO/L^{\text{alg}}(\text{ad}^0 \rho_{\phi}(2)|_{G_{K}},0)) = 1$,
\end{enumerate}
then we can apply Theorem \ref{R=T} to $R$ to prove $R = \mathbf{T}^{f'}$.

\begin{lem}{\label{I_tr = I^tot}}
Let $I_{\tr}$ be the smallest ideal of $R$ containing $\{\tr \sigma (\Frob_{v})  - \sum\limits_{i = 0}^{2} \tr(\rho_{\phi}(i))
\newline
(\Frob_{v}) \big | v \nmid p D_K\}$. Assuming Assumption \ref{main assumptions}(4), then $I_{\tr} = I^{\tot}$.
\end{lem}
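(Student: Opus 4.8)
The plan is to establish the two inclusions $I^{\tot}\subseteq I_{\tr}$ and $I_{\tr}\subseteq I^{\tot}$ separately. Throughout I would use that $\tr\sigma$ is the trace $T_{D}$ of the pseudorepresentation $D=\det\circ\sigma$, and I would replace $E$ by a finite extension if necessary so that each $\tilde\rho_i$ --- the unique Fontaine--Laffaille lift to $\mO$ of the Jordan--H\"older factor $\rho_i$ of $\overline\sigma$, cf.\ Assumption \ref{main assumptions}(4) --- is defined over $\mO$ and so that $R$ may be regarded as an object of $\hat{\mathcal C}_{\mO}$; with this normalization $\sum_i\tr\tilde\rho_i(\Frob_v)$ is exactly the quantity $\sum_{i=0}^{2}\tr\rho_\phi(i)(\Frob_v)$ appearing in the definition of $I_{\tr}$, up to the global twist implicit in $\overline\sigma$. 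Since modulo $\m$ all of these traces collapse to $\tr\overline\sigma^{\sss}=\tr(\rho_1\oplus\rho_2\oplus\rho_3)$, both ideals lie in $\m$, and $R/I_{\tr}$, $R/I^{\tot}$ are again objects of $\hat{\mathcal C}_{\mO}$.

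For $I^{\tot}\subseteq I_{\tr}$ I would set $A=R/I_{\tr}$ and take $D_i'$ to be $\det\circ\tilde\rho_i$ pushed forward along $\mO\to R\to A$. Each $D_i'$ reduces to $\det\rho_i$, so $T_{D_i'}\otimes k=\tr\rho_i$, which is condition (2) of Definition \ref{red def} for the total partition $\cP=\{1\}\cup\{2\}\cup\{3\}$. For condition (1) I need $T_{D}\otimes A=\sum_i T_{D_i'}$ as functions $G_\Sigma\to A$; by the very definition of $I_{\tr}$ these two continuous class functions agree on all $\Frob_v$ with $v\nmid pD_K$, which by Chebotarev density form a dense subset of $G_\Sigma$, so they agree everywhere ($A$ being Hausdorff). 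Hence $D$ is reducible over $R/I_{\tr}$ with the prescribed residual constituents, and the universal property of the reducibility ideal yields $I^{\tot}\subseteq I_{\tr}$.

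For $I_{\tr}\subseteq I^{\tot}$, arguing as in Proposition \ref{upper triangular and I tot} --- which applies equally to $R$, through the surjection $R_{\FL}\twoheadrightarrow R$ and functoriality of reducibility ideals under quotients --- the deformation $\sigma\otimes_R R/I^{\tot}$ is upper-triangular, with diagonal blocks $\rho_1',\rho_2',\rho_3'$ deforming $\rho_1,\rho_2,\rho_3$. Since $\sigma\otimes R/I^{\tot}$ is Fontaine--Laffaille at $p$ and the essential image of $\mathbf G$ is closed under subquotients, each $\rho_i'$ is a Fontaine--Laffaille deformation of $\rho_i$ to $R/I^{\tot}$; by $R_{i,\FL}=\mO$ (Assumption \ref{main assumptions}(4)) it is therefore strictly equivalent to the pushforward of $\tilde\rho_i$ along the unique $\mO$-algebra map $\mO\to R/I^{\tot}$, so $\tr\rho_i'(g)$ is the image of $\tr\tilde\rho_i(g)$ for every $g\in G_\Sigma$. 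Then $\tr\sigma(g)\equiv\sum_i\tr\rho_i'(g)=\sum_i\tr\tilde\rho_i(g)\pmod{I^{\tot}}$ for all $g$, in particular for $g=\Frob_v$ with $v\nmid pD_K$, which gives $I_{\tr}\subseteq I^{\tot}$.

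I expect the main obstacle to be the second inclusion, and specifically the identification of the diagonal blocks of $\sigma\otimes R/I^{\tot}$ with the reductions of the $\tilde\rho_i$: one must make sure the Fontaine--Laffaille condition genuinely descends to the subquotients $\rho_i'$ --- which is precisely why I would pass through the upper-triangular model rather than argue abstractly with the reducibility decomposition, since descent of the Fontaine--Laffaille condition from a pseudorepresentation to its constituents is not formal --- and that the uniqueness in Assumption \ref{main assumptions}(4), combined with the convention that $R\in\hat{\mathcal C}_{\mO}$, pins the traces $\tr\rho_i'$ down exactly. The first inclusion is comparatively routine: it is just Chebotarev density together with the defining property of the reducibility ideal.
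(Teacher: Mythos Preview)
Your proposal is correct and follows essentially the same two-inclusion strategy as the argument referenced by the paper (Lemma 2.9 in \cite{berger2015lifting}): one inclusion via Chebotarev density and the defining property of the reducibility ideal, the other via upper-triangularity of $\sigma\otimes R/I^{\tot}$ together with the uniqueness assumption $R_{i,\FL}=\mO$. Your observation that one should pass through the upper-triangular model to ensure the Fontaine--Laffaille condition descends to the diagonal blocks, rather than appealing directly to the pseudorepresentation decomposition, is exactly the right way to handle the second inclusion.
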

\begin{proof}
The proof is analogous to that of Lemma 2.9 in \cite{berger2015lifting}. 
\end{proof}

\begin{rmk}
The Fontaine-Laffaille at $p$ lift of the Jordan-H\"older factors being unique guaranteed by Assumption \ref{main assumptions}(4) is critical to Lemma \ref{I_tr = I^tot}. Without it, one may have $ I^{\tot} \subsetneq I_{\tr}$ (cf. \cite{wake2020rank}).
\end{rmk}

We summarize and conclude the above discussion in Theorem \ref{Ikeda R=T} below.
\begin{thm}{\label{Ikeda R=T}}
($R=\mathbf{T^{f'}}$)
Keeping the assumptions in Lemma \ref{Existance of congruence}, and assume there is no newform $h \in S_{2k'}(1)$ satisfies $\phi \equiv h (\mmod \varpi)$.

If
\begin{enumerate}
    \item $v_1 = 1$,
    \item $v_2 = 0$,
\end{enumerate}
then there exists $f' \in \mathcal{S}_{3,2k'+2}(\hat{\Z})$ orthogonal to the space spanned by all the Ikeda lifts such that $f' \equiv I_{\phi} (\mmod \varpi)$.

Moreover, if
\begin{enumerate}\setcounter{enumi}{2}
    \item $\text{val}_{\varpi}(\# \mO/L^{\text{alg}}(\text{ad}^0 \rho_{\phi},0)) + \text{val}_{\varpi}(\# \mO/L^{\text{alg}}(\text{ad}^0 \rho_{\phi} \otimes \chi_K,0)) = 1$,
    \item $\text{val}_{\varpi}(\# \mO/L^{\text{alg}}(\text{ad}^0 \rho_{\phi}(2),0)) = 0$,
    \item $\text{val}_{\varpi}(\# \mO/L^{\text{alg}}(\text{ad}^0 \rho_{\phi}(3)  \otimes \chi_K,0))  = 0$,
\end{enumerate}
and the Galois representation $\rho_{f'}: G_{K} \to \GL_6(E)$ attached to $f'$ is irreducible, then the $\lambda$-part of the Bloch--Kato conjecture implies $\# \mathbf{T}^{f'}/I_{I_{\phi}} \ge \# \mO/\varpi$, 
\newline
$\# H^1_{\Sigma}(K, \ad \rho_{\phi}(-1)|_{G_{K}} \otimes E/\mO) \le \# \mO/\varpi$ and $R \cong \mathbf{T}^{f'}$.
\end{thm}
\begin{proof}
The existence of an \(f'\) such that $f' \equiv I_{\phi} (\mmod \varpi)$ and $\# \mathbf{T}^{f'}/I_{I_{\phi}} \ge \# \mO/\varpi$ follows from Lemma \ref{Existance of congruence}.

Regarding the statements related to $R \cong \mathbf{T}^{f'}$, we apply Theorem \ref{R=T}. First, we show $\overline{\sigma}$ satisfies the setup in Theorem \ref{R=T} and Section \ref{general residual set up}. By Conjecture 7.1 in \cite{brown2020congruence}, which is widely regarded as a known result (see Remark 7.3 in \cite{brown2020congruence}), since $p \nmid D_K$ and $p > 3$ by assumption, $\overline{\sigma}$ is Fontaine--Laffaille at $p$. Note that $L^{\text{alg}}(\text{ad}^0 \rho_{\phi}(i)|_{G_{K}},0) = L^{\text{alg}}(\text{ad}^0 \rho_{\phi}(i),0) L^{\text{alg}}(\text{ad}^0 \rho_{\phi}(i) \otimes \chi_K,0)$ for $i \in \Z$ by an easy calculation on Euler factors. Then following Proposition \ref{Lattice}, under the assumptions (2)(5) in the Theorem and the $\lambda$-part of the Bloch--Kato conjecture, we can find a Galois-invariant lattice $T$ inside the representation, such that the reduction of $\rho$ attached to $T$ is 
$\overline{\rho}_{f'}: G_{K} \to \GL_6(k)$:
$$
\overline{\rho}_{f'}(g) = 
\overline{\epsilon}^{2-k'}(g)
\begin{pmatrix}
     \overline{\rho}_{\phi}|_{G_{K}}(g) & a(g) & b(g)\\
     & \overline{\rho}_{\phi}|_{G_{K}}(g) \overline{\epsilon}(g) & c(g) \\
     & & \overline{\rho}_{\phi}|_{G_{K}}(g) \overline{\epsilon}^2(g)
\end{pmatrix},
$$
where
$
\begin{pmatrix}
    \overline{\rho}_{\phi}|_{G_{K}} & a\\
    0 & \overline{\rho}_{\phi}|_{G_{K}} \overline{\epsilon}
\end{pmatrix},
\begin{pmatrix}
    \overline{\rho}_{\phi}|_{G_{K}} \overline{\epsilon} & c\\
    0 & \overline{\rho}_{\phi}|_{G_{K}} \overline{\epsilon}^2
\end{pmatrix}
$
are non-split extensions. And the map $\tau: \mO[\G] \to \mO[\G]$ by $\tau(g) = \epsilon(g) g^{-1} \epsilon^2$ guarantees the existence of the anti-involution.

Then, we check the running Assumptions \ref{main assumptions}. By the $\lambda$-part of the Bloch--Kato conjecture, numerical assumptions (1)--(5) in Theorem \ref{Ikeda R=T} ensure (1)--(3) in Assumptions \ref{main assumptions}. For (4) in Assumptions \ref{main assumptions}, there is no newform $h \in S_{2k'}(1)$ such that $\phi \equiv h (\mmod \varpi)$, so $R_{i,\FL} = \mO$ for each $i$ (see Remark \ref{R_i = O}).

Following a similar argument as in the proof of Proposition 7.13 in \cite{berger2013deformation}, we get that $R$ is generated by traces and there is a $\mO$-algebra surjection: $\varphi: R \twoheadrightarrow \mathbf{T}^{f'}$. By a similar argument as in Lemma 3.7 in \cite{berger2015lifting}, we can show the inverse image of $I_{I_{\phi}}$ is $I_{\tr}$, and we get that $I_{I_{\phi}} = \varphi(I^{\tot})$ by Lemma \ref{I_tr = I^tot}. 

The existence of \(f'\) such that $f' \equiv I_{\phi} (\mmod \varpi^{v_1 + v_2})$ gives that $\# \mathbf{T}^{f'}/I_{I_{\phi}} \ge \# \mO/\varpi^{v_1 + v_2}$ \cite{berger2013higher}. 
Along with the $\lambda$-part of the Bloch--Kato conjecture that 
\newline
says $\# H^1_{\Sigma}(K, \ad \rho_{\phi}(-1)|_{G_{K}} \otimes E/\mO) \le \# \mO/\varpi^{v_1^{'}}$, where $v_1^{'} = \text{val}_{\varpi}(\# \mO/L^{\text{alg}}(\text{ad}^0 \rho_{\phi}(2),0)) + v_1$ by the $L$-value discussion above, we conclude $R \cong \mathbf{T}^{f'}$ from Theorem \ref{R=T}.

\end{proof}

\begin{rmk}
Theorem \ref{Ikeda R=T} is an application of Theorem \ref{R=T}, where \(R\) is not guaranteed to be a DVR. However, in Theorem \ref{Ikeda R=T}, $R$ is still a DVR. This is because we are using the $L$-value $v_1^{'}$ to simultaneously bound $\# H^1_{\Sigma}(K, \ad \rho_{\phi}(-1)|_{G_{K}} \otimes E/\mO)$ and ensure $\dim_{k} H^{1}_{\Sigma}(K, \ad \overline{\rho}_{\phi}(=1)|_{G_{K}}) = 1$. 

However, in Theorem \ref{R=T}, if one can directly determine these values separately, it is possible that $\# H^1_{\Sigma}(F,\Hom(\Tilde{\rho_2},\Tilde{\rho_1}) \otimes_{E} E/\mO) > \#\mO/\varpi$ and $\dim_{k} H^1_{\Sigma}(F,\Hom(\rho_2,\rho_1) = 1$ and $R$ is not a DVR. See Remark \ref{Selma not cyclic} for an example.
\end{rmk}

\printbibliography

\end{document}